\theoremstyle{plain}
\newtheorem{theorem}{Theorem}[section]
\newtheorem{proposition}[theorem]{Proposition}
\newtheorem{lemma}[theorem]{Lemma}
\newtheorem{corollary}[theorem]{Corollary}
\newtheorem{claim}[theorem]{Claim}
\theoremstyle{definition}
\newtheorem{defn}[theorem]{Definition}
\newtheorem{example}[theorem]{Example}
\newtheorem{remark}[theorem]{Remark}
\numberwithin{equation}{section}
\DeclareMathOperator{\F}{\mathbb{F}} 
 \DeclareMathOperator{\Z}{\mathbb{Z}}
\DeclareMathOperator{\C}{\mathbb{C}}  \DeclareMathOperator{\Q}{\mathbb{Q}}
 \DeclareMathOperator{\STD}{Std}
\newcommand{\nc}{\newcommand}
\nc{\dten}{10} \nc{\deleven}{11} \nc{\dtwelve}{12}
\nc{\dthirteen}{13} \nc{\dfourteen}{14} \nc{\dfifteen}{15}
\nc{\dsixteen}{16}
\newcommand{\arc}{\ar@{-}@/_/}
\newcommand{\tra}{\ar@{-}}
\newcommand{\dta}{\ar@{.}}
\newcommand{\ltra}{\ar@{<-}}
\newcommand{\rtra}{\ar@{->}}
\newcommand{\larc}{\ar@{<-}@/_/}
\newcommand{\rarc}{\ar@{<-}@/_/}
\begin{document}
\title[$q$-Brauer ]{A cellular basis of the $q$-Brauer algebra related with Murphy bases of Hecke algebras}

\author[D.T. Nguyen]{Dung Tien Nguyen}

\address{Nguyen Tien Dung }

\address{ Department of Education,  Vinh University, Leduan 182, Vinh city, Viet Nam}

\email{dungnt@vinhuni.edu.vn}

\maketitle
\bigskip

\begin{quote}
{\footnotesize {\bf Abstract.}  A new basis of the $q$-Brauer algebra is introduced, which is a lift of Murphy bases of Hecke algebras of symmetric groups. This basis is a cellular basis in the sense of Graham and Lehrer.
Subsequently, using combinatorial language we prove that the non-isomorphic simple $q$-Brauer modules are indexed by the $e(q^2)$-restricted partitions of $n-2k$ where $k$ is an integer, $0 \le k \le [n/2]$. 
When the $q$-Brauer algebra has low-dimension a criterion of semisimplicity is given, which is used to show that the $q$-Brauer algebra is in general not isomorphic to the BMW-algebra.}
\end{quote}

\maketitle
\tableofcontents

\section{Introduction}
In the classical Schur-Weyl duality the actions of the general linear group $GL(N)$  and the symmetric group $S_n$ on the tensor power spaces $(\C^{N})^{\otimes{n}}$
are centralizers of each other. In 1937, Richard Brauer showed that when replacing $GL(N)$ by the orthogonal subgroup $O(N)$ or the symplectic subgroup $Sp(N)$ the corresponding centralizer is a larger algebra  containing the symmetric group, called the Brauer algebra $D_n(N)$. In the quantum case, there is an analogue of these dualities in which:
$GL(N)$ and $S_n$ are substituted by the quantized enveloping algebra $U_q(\mathfrak{gl}_N)$ and the Hecke algebra of the symmetric group $H_n(q)$ respectively (see \cite{Jim}); $O(N)$ (resp. $Sp(N)$) and $D_n(N)$ are substituted by the quantized enveloping algebras $U_q(\mathfrak{o}_N)$ (resp. $U_q(\mathfrak{sp}_N)$ and the BMW-algebra $\mathfrak{B}_n$, a $q$-deformation of the Brauer algebra, with appropriate choices of parameters respectively (see \cite{LR}, or \cite{CP} Section 10.2).

Recently, another $q$-deformation of the Brauer algebra has been introduced by Wenzl \cite{W2} via generators and relations who called it the $q$-Brauer algebra.
This algebra contains the Hecke algebra of the symmetric group as a subalgebra and, over the field $\Q(r,q)$, is semisimple and isomorphic to the Brauer algebra.
Some applications of this algebra were found by Wenzl in \cite{W3} and \cite{W4}. 
In \cite{N} the generic $q$-Brauer algebra is shown to be cellular without giving a cellular basis. Relating with structural properties, it is exhibited as an iterated inflation of Hecke algebras of symmetric groups, and then the complete set of its non-isomorphic simple modules is classified, which bases on this structure. These results in \cite{N} motivate for our work in the present article. In particular, the subjects of this note are following three questions.

{\bf{Question 1.}} How to give a cellular basis of the $q$-Brauer algebra?

{\bf{Question 2.}} Is there a combinatorial and direct proof for parametrization of simple modules of the $q$-Brauer algebra shown in \cite{N}?

{\bf{Question 3.}} In general, does there exist an algebra isomorphism between the $q$-Brauer algebra and the BMW-algebra? 

For Question 1 we construct a new basis of the $q$-Brauer algebra, which is derived from the one introduced in \cite{N} (Theorem 3.13). This new basis is a lift of Murphy bases of Hecke algebras of symmetric groups and exists for every version (one or two parameters) of the $q$-Brauer algebra over a field of any characteristic. The main result stated in Theorem \ref{Mthm} is that the $q$-Brauer algebra over a commutative ring has the basis consisting of elements that are indexed by two pairs, in each pair the first entry is a standard tableaux and the second one is a certain partial Brauer diagram. Then this basis is checked straightforward to be cellular in the sense of Graham and Lehrer \cite{GL}. More precisely, this basis enables us to answer the other two questions. In \cite{N} Dung showed that the simple $q$-Brauer modules up to isomorphism are indexed by the $e(q^2)$-restricted partitions of $n-2k$ where $k$ is an integer, $0 \le k \le [n/2]$. 
 The proof, however, needs to use the structure "iterated inflation" of the $q$-Brauer algebra which is complicated. 
By detail calculations on an explicit basis of cell modules we give a simple answer for the Question 2 in Theorem \ref{irremodule} which does not relate to the structure of the $q$-Brauer algebra. Finally, Question 3 is fully answered by applying the general theory of cellular algebra on the constructed basis of the $q$-Brauer algebra. We give  
a criterion for semisimplicity of the $q$-Brauer algebra, $Br_n(r^2, q^2)$, in the case $n \in \{2, 3\}$ in Propositions \ref{criterion1}, \ref{criterion2} and some explicit calculations in Examples \ref{ex10}, \ref{ex11}, \ref{ex12}. These results imply a negative answer for Question 3. The statement is that:

\begin{claim}  \label{claim} In general, there does not exist an algebra isomorphism between the $q$~-~Brauer algebra $Br_n(r^2, q^2)$ (resp. $Br_n(r, q)$)  and the BMW- algebra $\mathscr{B}_n$.
\end{claim}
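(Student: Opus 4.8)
The plan is to recast the question as one about semisimplicity, which is an invariant of the algebra up to isomorphism. By Theorem \ref{Mthm} the $q$-Brauer algebra $Br_n(r^2,q^2)$ is cellular over an arbitrary field $K$, and the BMW-algebra $\mathscr{B}_n$ (at the corresponding values of the parameters) is also cellular over $K$; hence, by the criterion of Graham and Lehrer \cite{GL}, each of the two algebras is split semisimple over $K$ if and only if the Gram matrix of every one of its cell modules is invertible. Consequently, if $Br_n(r^2,q^2)$ and $\mathscr{B}_n$ were isomorphic as $K$-algebras for a fixed choice of $r,q\in K$, they would be simultaneously semisimple or simultaneously non-semisimple. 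So it suffices to exhibit, for a single value of $n$, a field $K$ and parameters $r,q\in K$ at which precisely one of the two algebras is semisimple: this contradicts the existence of an isomorphism ``in general'', and it settles the claim for both normalisations at once, since $Br_n(r,q)$ is obtained from $Br_n(r^2,q^2)$ by the substitution $q\mapsto q^2$, $r\mapsto r^2$.

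To produce such parameters I would work in the smallest ranks $n\in\{2,3\}$, where everything can be made explicit. Using the cellular basis of Theorem \ref{Mthm}, the cell modules of $Br_n(r^2,q^2)$ are indexed by the pairs $(k,\lambda)$ with $0\le k\le [n/2]$ and $\lambda\vdash n-2k$ (three of them for $n=2$, of dimensions $1,1,1$; four for $n=3$, of dimensions $1,2,1,3$), and on each of them the action of the generators of the algebra can be written down on a distinguished basis. Computing the determinants of the associated bilinear forms then yields, up to units in $K$, explicit Laurent polynomials in $r$ and $q$ whose simultaneous non-vanishing is equivalent to semisimplicity; this is exactly the content of Propositions \ref{criterion1} and \ref{criterion2}. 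Comparing these polynomials with the (classical) Gram determinants of $\mathscr{B}_n$, one sees that the two non-semisimplicity loci in the $(r,q)$-plane are genuinely different --- essentially because the loop/contraction parameter $r$ enters the defining relations of the $q$-Brauer algebra in a different way than in the BMW-algebra, so the contributions of the cell modules with $k\ge 1$ do not match. Picking $(K,r,q)$ in one locus but not the other --- which is what Examples \ref{ex10}, \ref{ex11}, \ref{ex12} do --- gives a case in which $Br_n(r^2,q^2)$ is semisimple while $\mathscr{B}_n$ is not (or conversely), and hence proves Claim \ref{claim}.

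The main obstacle is the explicit description of the bilinear forms on the cell modules of $Br_n(r^2,q^2)$ coming from the new basis, and the evaluation of their determinants --- in particular the part of the criterion contributed by the ``diagrammatic'' cell modules ($k\ge 1$), which is the part that is invisible on the Hecke subalgebra and that carries the dependence on $r$. For $n\le 3$ this is a finite computation, so the difficulty is bookkeeping rather than conceptual, and it is recorded in Propositions \ref{criterion1} and \ref{criterion2}. Once a single witnessing specialisation has been found, no subtlety about preferred presentations or choices of generators can interfere, because semisimplicity is intrinsic to the abstract algebra: a lone parameter value at which the two algebras differ in this respect already shows that no algebra isomorphism between $Br_n(r^2,q^2)$ (resp.\ $Br_n(r,q)$) and $\mathscr{B}_n$ can exist in general.
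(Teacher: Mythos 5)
Your proposal is correct and follows essentially the same route as the paper: semisimplicity is an isomorphism invariant, the cellular structure from Theorem \ref{Mthm} reduces semisimplicity to the non-vanishing of the Gram determinants of the cell modules computed explicitly for $n\in\{2,3\}$ (Propositions \ref{criterion1} and \ref{criterion2}), and the comparison with the BMW semisimplicity criteria of Rui--Si at specific parameter values (Examples \ref{ex10}, \ref{ex11}, \ref{ex12}) produces the required witnesses. The only minor slip is the direction of the substitution relating the two normalisations --- by Remark \ref{rm1}(2) one passes from $Br_n(r,q)$ to $Br_n(r^2,q^2)$ by replacing $q,r,e$ with $q^2,r^2,(q^{-1}r)e$ --- but this does not affect the argument.
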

 

\medskip
{\bf{Acknowledgments}}

I would like to thank my supervisor, Professor Steffen K\"onig, for constant support and valuable advice during this work.
The research work is financially supported by the Project MOET-322 of the Training and Education Ministry of Vietnam
and (partly) by the DFG Priority Program SPP-1489. I would also like to express my gratitude for this.

\section{Notation and preliminaries}\label{prelsec}
This section recalls the concepts tableaux and Young subgroup and collects basic and necessary facts of the representation theory of the Hecke algebra of the symmetric group. We introduce these with a slight difference in which the usual symmetric group and its deformation, the Hecke algebra, are replaced by isomorphic ones, written in a different way. In particular, we need to use background on a subgroup of the symmetric group and the representation theory of its corresponding Hecke algebra. However, the usual results in the literature hold true for this restriction (see \cite{DJ1}, \cite{Math} or \cite{M2}).  
\subsection{Combinatorics} \label{com}
For a positive integer $n$, denote $S_{n}$ the symmetric group acting on $\{1,\dots,n\}$ on the right.
For $i$ an integer, $1 \le i<n$, let $s_i$ denote the basic transposition, that is, a permutation of the form $(i,i+1)$. Then $S_{n}$ is generated by generators $s_{1},s_{2},\dots,s_{n-1}$, which satisfy the relations
\begin{align*}
&s_i^2=\bf{1}&&\text{for $1\le i<n$;}\\
&s_is_{i+1}s_i=s_{i+1}s_is_{i+1}&&\text{for $1\le i\le n-2$;}\\
&s_is_j=s_js_i&&\text{for $1 < |i-j|$.}
\end{align*}
Let $k$ be an integer, $0\le k\le [n/2]$. Denote $S_{2k+1, n}$ to be the subgroup of $S_n$ generated by generators $s_{2k+1}, s_{2k+2},\cdots, s_{n-1}$. This subgroup is isomorphic to the symmetric group $S_{n-2k}$
 
For $w \in S_n$, if $\omega = s_{i_1}s_{i_2}\cdots s_{i_m}$ and $m$ is minimal with this property
then $\ell(\omega)= m$, and we call $s_{i_1}s_{i_2}\cdots s_{i_m}$ \textit{a reduced expression} for $\omega$.

Let $k$ be an integer, $0\le k\le [n/2]$. If $n-2k>0$,
a \emph{partition} $\lambda$ of $n-2k$ is a sequence
$\lambda =(\lambda_1, \lambda_2, \cdots)$ of non-negative integers such that $\lambda_i \geq \lambda_{i+1}$ for all $i \geq 1$ and $| \lambda | = \sum_{i=1}\lambda_i=n-2k$.
The non-negative integers $\lambda_i$, for $i \geq 1$, are the parts of $\lambda$;
if $\lambda_i = 0$ for $i > m$ we identify $\lambda$ with $(\lambda_1, \lambda_2, \cdots, \lambda_m)$ and denote $\lambda\vdash n-2k$. If $n-2k=0$, write
$\lambda=\varnothing$ for the empty partition.
The \emph{Young diagram} of a partition $\lambda$ is the subset
\begin{align*}
[\lambda]=\{(i,j)\,:\,\text{$1\le i$ and $1 \le j \le \lambda_i$}\,\}\subseteq \mathbb{N}\times\mathbb{N}, 
\end{align*}
where each pair $(i,j)$ of $[\lambda]$ is called a \emph{node} of $\lambda$. The diagram $[\lambda]$ is represented as an array of boxes with $\lambda_i$
boxes on the $i$--th row. For example, if $\lambda=(3, 2, 1)$ then
$[\lambda]=\text{\tiny\Yvcentermath1$\yng(3,2,1)$}$\,. 
Let $k$ be an integer, $0\le k\le [n/2]$, and $\lambda \vdash n-2k$. A $\lambda$--tableau labeled by
$\{2k+1,2k+2,\dots,n\}$ is a bijection $\mathfrak{t}$ from the nodes
of the diagram $[\lambda]$ to the integers $\{2k+1,2k+2,\dots,n\}$.
A given $\lambda$--tableau
$\mathfrak{t}:[\lambda]\to\{2k+1,2k+2,\dots,n\}$ can be visualized
by labeling the nodes of the diagram $[\lambda]$ with the integers
$2k+1,2k+2,\dots,n$. For instance, if $n=10$, $k=2$ and
$\lambda=(3,2,1)$,
\begin{align}\label{tabex0.0}
\mathfrak{t}=\text{\tiny\Yvcentermath1$\young(859,7\dten,6)$}
\end{align}
represents a $\lambda$--tableau. A $\lambda$--tableau $\mathfrak{t}$
labeled by $\{2k+1,2k+2,\dots,n\}$ is said to be \emph{standard} if
the entries in $\mathfrak{t}$ increase from left to right in each row and from top to bottom in each column.
Let $\mathfrak{t}^\lambda$ denote the $\lambda$-tableau
in which the integers $2k+1,2k+2,\dots,n$ are
entered in increasing order from left to right along the rows of
$[\lambda]$.The tableau $\mathfrak{t}^\lambda$ is referred to as the
\emph{superstandard tableau}. For instance, let $n=10$, $k=2$ and
$\lambda=(3,2,1)$,
\begin{align*} 
\mathfrak{t}^\lambda=\text{\tiny\Yvcentermath1$\young(567,89,\dten)$}\,.
\end{align*}
For $\lambda \vdash n-2k$, denote $\STD(\lambda)$ the set of standard $\lambda$--tableaux labeled by the integers
$\{2k+1,2k+2,\dots,n\}$.  

For $\lambda$ and $\mu$ arbitrary partitions, the partition $\lambda$ is called to \emph{dominate} the partition $\mu$, 
write $\lambda\unrhd\mu$, if either
 (1) $|\mu|>|\lambda|$ or
(2) $|\mu|=|\lambda|$ and
$\sum_{i=1}^m\lambda_i\ge\sum_{i=1}^m\mu_i$ for all $m>0$.
We will write $\lambda\rhd\mu$ to mean that $\lambda\unrhd\mu$ and
$\lambda\ne\mu$.
The symmetric group $S_{2k+1, n}$ acts on the set of
$\lambda$--tableaux on the right in the usual manner, by permuting
the integer labels of the nodes of $[\lambda]$. For example,
\begin{align}\label{tabex0}
\text{\Yvcentermath1$\young(567,89,\dten)$}\,(5, 6, 10, 9, 7, 8)\,
=\text{\Yvcentermath1$\young(859,7\dten,6)$} \,.
\end{align}

Let $\lambda$ be a partition of $n-2k$, define \emph{Young subgroup} $S_\lambda$ to be the
row stabilizer of $\mathfrak{t}^\lambda$ in $S_{2k+1,n}$.
For instance, when $n=10$, $k=2$ and $\lambda=(3,2,1)$, then a direct calculation yields $S_\lambda=\langle
s_5,s_6,s_8\rangle$. To each $\lambda$--tableau $\mathfrak{t}$,
associate a unique permutation $d(\mathfrak{t})\in~S_{2k+1,n}$ by
the condition $\mathfrak{t}=\mathfrak{t}^\lambda d(\mathfrak{t})$.
Using the tableau $\mathfrak{t}$ in~\eqref{tabex0.0} above it deduces that $d(\mathfrak{t})~=~(5, 6, 10, 9, 7, 8)$
by~\eqref{tabex0}.

\subsection{The Hecke algebra of the symmetric group}\label{ihsec}
Let $R$ denote an integral domain
and $q$ is an invertible element in $R$. The Hecke algebra of
the symmetric group $S_{2k+1, n}$ is the unital associative $R$--algebra
$H_{2k+1, n}(q^2)$ with generators $g_{2k+1},g_{2k+2},\dots, g_{n-1},$ which
satisfy the defining relations
\begin{align*}
&g^2_i= (q^2-1)g_i+q^2&&\text{for $2k+1\le i<n$;}\\
&g_ig_{i+1}g_i=g_{i+1}g_ig_{i+1}&&\text{for $2k+1\le i<n-1$;}\\
&g_ig_j=g_jg_i&&\text{for $2\le|i-j|$.}
\end{align*}
Note that $H_{2k+1, n}(q^2)$ is isomorphic to the usual Hecke algebra $H_{n-2k}(q^2)$ in the literature.
If $w\in S_{2k+1, n}$ and $s_{i_1}s_{i_2}\cdots s_{i_m}$ is a
reduced expression of $w$, then
$g_w=g_{i_1}g_{i_2}\cdots g_{i_m}$
is a well defined element of $H_{2k+1, n}(q^2)$ and the set
$\{g_w\,:\,w\in S_{2k+1, n}\}$ is a basis the Hecke algebra
$H_{2k+1, n}(q^2)$. 
From now on, we abbreviate $H_{2k+1, n}$ replacing $H_{2k+1, n}(q^2)$.
Let $*$ denote the algebra involution of $H_{2k+1, n}$ determined by $(g_w)^* = g_{w^{-1}}$ for $\omega~\in~S_{2k+1, n}$. Write $g^*_w = (g_w)^*$.

In the following we collect some facts from the
representation theory of the Hecke algebra of the symmetric
group that need for our subsequent work in Sections 3 and 4; details can be found in~\cite{Math}
or~\cite{M2}. If $\mu$ is a partition of $n-2k$, define the element
\begin{align} \label{ct16}
c_\mu=\sum_{\sigma\in S_\mu} g_\sigma.
\end{align}
Denote $\check{\mathscr{H}}^\lambda_{2k+1, n}$ to be the $R$-module in $H_{2k+1, n}$ with basis
\begin{align} \label{idealh}
\big\{c_\mathfrak{st}=g_{d(\mathfrak{s})}^*c_\mu
g_{d(\mathfrak{t})}:\text{$\mathfrak{s},\mathfrak{t}\in\STD(\mu)$,
where $\mu\rhd\lambda$ }\big\}.
\end{align}
The next statement is due to Murphy in~\cite{M2}.

\begin{theorem}\label{Hthm}
The Hecke algebra $H_{2k+1, n}$ is free as an
$R$--module with basis
\begin{align}\label{ct7'}
\mathscr{M}=\left\{c_\mathfrak{st}= g_{d(\mathfrak{s})}^*c_\lambda
g_{d(\mathfrak{t})}\,\bigg|\,
\begin{matrix}
\text{ for $\mathfrak{s},\mathfrak{t}\in\STD(\lambda)$ and }\\
\text{$\lambda$ a partition of $n-2k$}
\end{matrix}
\right\}.
\end{align}
Moreover, the following statements hold.
\begin{enumerate}
\item The $R$--linear involution $*$ satisfies $(c_{\mathfrak{st}})^* = c_\mathfrak{ts}$ for all $\mathfrak{s},\mathfrak{t}\in\STD(\lambda)$.
\item Suppose that $h\in H_{2k+1, n}$,
and $\mathfrak{s}$ is a standard $\lambda$--tableau. Then
there exist $a_\mathfrak{t}\in R$, for
$\mathfrak{t}\in\STD(\lambda)$, such that for all
$\mathfrak{s}\in\STD(\lambda)$,
\begin{align}\label{ct6}
c_{\mathfrak{sv}}h\equiv
\sum_{\mathfrak{t}\in\STD(\lambda)}a_\mathfrak{t}
c_{\mathfrak{st}} \mod \check{\mathscr{H}}^\lambda_{2k+1, n}.
\end{align}
\end{enumerate}
\end{theorem}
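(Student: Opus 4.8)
\emph{Reduction.} The assignment $g_{2k+i}\mapsto g_i$ extends to a $*$-equivariant algebra isomorphism $H_{2k+1,n}\xrightarrow{\ \sim\ }H_{n-2k}(q^2)$ carrying $c_\mu$, $c_{\mathfrak{st}}$ and $\check{\mathscr H}^\lambda_{2k+1,n}$ to the corresponding objects built from the Murphy/Dipper--James elements of $H_{n-2k}(q^2)$, so the theorem is literally Murphy's standard basis theorem (\cite{M2}; see also \cite{Math} Ch.~3) transported along this isomorphism, and one could simply cite it. To reproduce the argument, I would first note that $|\mathscr M|=\sum_{\lambda\vdash n-2k}|\STD(\lambda)|^2=(n-2k)!$ by the Robinson--Schensted correspondence, which equals the $R$-rank of $H_{2k+1,n}$ recorded by the $\{g_w\}$ basis in Section~\ref{ihsec}. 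Since $H_{2k+1,n}$ is $R$-free of that rank, it then suffices to prove that $\mathscr M$ spans $H_{2k+1,n}$ over $R$: a spanning set of $(n-2k)!$ elements in such a module is automatically a basis, as one sees after extending scalars to the fraction field of $R$.

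\emph{The combinatorial core.} The spanning statement rests on the analysis of the right module $c_\mu H_{2k+1,n}$. Using $g_ic_\mu=q^2c_\mu$ for $s_i\in S_\mu$ together with the length-additivity $\ell(\sigma d)=\ell(\sigma)+\ell(d)$ for $\sigma\in S_\mu$ and $d$ a minimal-length right coset representative of $S_\mu$, one shows $c_\mu H_{2k+1,n}$ is spanned by $\{c_\mu g_{d(\mathfrak t)}:\mathfrak t\text{ a row-standard }\mu\text{-tableau}\}$. The key lemma --- the Hecke-algebra analogue of the classical Garnir relations --- is that a row-standard but non-standard tableau can be straightened:
\[ c_\mu g_{d(\mathfrak s)}\equiv\sum_{\mathfrak t\in\STD(\mu)}a_{\mathfrak t}\,c_\mu g_{d(\mathfrak t)}\pmod{\textstyle\sum_{\nu\rhd\mu}c_\nu H_{2k+1,n}},\qquad a_{\mathfrak t}\in R, \]
proved by induction on the dominance order of row-standard $\mu$-tableaux (equivalently on $\ell(d(\mathfrak s))$), each elementary step being the rewriting forced by $g_i^2=(q^2-1)g_i+q^2$ across a minimal column inversion of $\mathfrak s$. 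I expect this lemma to be the main obstacle: one must control exactly which $\nu\rhd\mu$ occur in the error term and check that the straightening coefficients land in $R$ rather than only in its fraction field.

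\emph{Downward induction and the remaining assertions.} I would then run a downward induction along the dominance order, proving simultaneously that $\mathscr H^{\unrhd\mu}:=\sum_{\nu\unrhd\mu}c_\nu H_{2k+1,n}$ is a two-sided ideal with $R$-basis $\{c_{\mathfrak{st}}:\mathfrak s,\mathfrak t\in\STD(\nu),\ \nu\unrhd\mu\}$, and hence that $\check{\mathscr H}^\lambda_{2k+1,n}$ coincides with the ideal $\sum_{\nu\rhd\lambda}c_\nu H_{2k+1,n}$. In the inductive step, applying the key lemma to $c_\mu g_{d(\mathfrak v)}h$ for $h$ a generator (hence for all $h$) gives coefficients $a_{\mathfrak t}\in R$, depending on $\mathfrak v$ and $h$ but not on anything multiplied on the left, with $c_\mu g_{d(\mathfrak v)}h\equiv\sum_{\mathfrak t}a_{\mathfrak t}c_\mu g_{d(\mathfrak t)}$ modulo $\sum_{\nu\rhd\mu}c_\nu H_{2k+1,n}$; left-multiplying by $g_{d(\mathfrak s)}^*$ and invoking, by induction, that $\sum_{\nu\rhd\mu}c_\nu H_{2k+1,n}$ is already an ideal yields both the spanning of $\mathscr H^{\unrhd\mu}$ by the relevant $c_{\mathfrak{st}}$ and precisely the congruence of part~(2). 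Specialising $\mu$ to a minimal partition recovers the spanning of $\mathscr M$, which completes the cardinality argument above and establishes the basis claim. Finally part~(1) is immediate: $*$ is an anti-automorphism with $g_w^*=g_{w^{-1}}$, and $c_\mu^*=\sum_{\sigma\in S_\mu}g_{\sigma^{-1}}=c_\mu$ since $S_\mu$ is closed under inversion and $\ell(\sigma^{-1})=\ell(\sigma)$, whence $(c_{\mathfrak{st}})^*=\big(g_{d(\mathfrak s)}^*c_\mu g_{d(\mathfrak t)}\big)^*=g_{d(\mathfrak t)}^*c_\mu g_{d(\mathfrak s)}=c_{\mathfrak{ts}}$.
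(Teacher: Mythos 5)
The paper gives no proof of this theorem at all --- it simply attributes it to Murphy \cite{M2} --- and your opening move, transporting Murphy's standard basis theorem along the isomorphism $H_{2k+1,n}\cong H_{n-2k}(q^2)$ and citing it, is exactly the paper's (implicit) argument. Your further sketch of that proof (the RSK count $\sum_\lambda|\STD(\lambda)|^2=(n-2k)!$ reducing the basis claim to spanning over the integral domain $R$, the Garnir/straightening lemma correctly flagged as the technical core, the downward induction on dominance yielding part~(2), and $c_\mu^*=c_\mu$ giving part~(1)) is a faithful outline of the standard argument in \cite{M2} and \cite{Math}, so there is nothing to correct.
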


The basis $\mathscr{M}$ is cellular in the sense
of~\cite{GL}. If $\lambda$ is a partition of $n-2k$, the
{\it{cell (or Specht)}} module $S^\lambda$ for $H_{2k+1, n}$ is
the $R$--module freely generated by
\begin{align}\label{ct7}
\{c_{\mathfrak{s}}=c_\lambda g_{d(\mathfrak{s})} +\check{\mathscr{H}}^\lambda_{2k+1, n}\,:\,\mathfrak{s}\in
\STD(\lambda)\},
\end{align}
and with the right $H_{2k+1, n}$--action
\begin{align} \label{ct8}
c_{\mathfrak{s}}h=
\sum_{\mathfrak{t}\in\STD(\lambda)}
a_{\mathfrak{t}}c_{\mathfrak{t}},&&\text{for
$h\in H_{2k+1, n}$,}
\end{align}
where the coefficients $a_\mathfrak{t}\in R$, for
$\mathfrak{t}\in\STD(\lambda)$, are determined by the
expression~\eqref{ct6}. The basis $\mathscr{M}$ is called {\it{Murphy basis}} for $H_{2k+1, n}$ and the basis~\eqref{ct7} is referred to
as the Murphy basis for $S^\lambda$. 
Notice that the $H_{2k+1, n}$--module $S^\lambda$ is dual to Specht module in~\cite{DJ1}.

Applying the general theory of cellular algebra, the bilinear form on $S^\lambda$ is the
unique symmetric $R$-bilinear map from $S^\lambda \times S^\lambda$ to $R$ such that
\begin{align} \label{bilinearform1}
\langle  c_{\mathfrak{s}},\ c_{\mathfrak{t}} \rangle  c_{\lambda}\equiv c_{\mathfrak{s}} c^*_{\mathfrak{t}} \text{ mod } \check{\mathscr{H}}^\lambda_{2k+1, n}
\end{align}
for all $\mathfrak{s}, \ \mathfrak{t} \in Std(\lambda)$.
Then, $rad\ S^\lambda=\{ x\in S^\lambda |\ \langle x, y\rangle=0 \text{ for all } y\in S^\lambda\}$ is a \break $H_{2k+1, n}$-submodule of $S^{\lambda}$. For each partition $\lambda$ of $n-2k$, denote $D^\lambda=S^\lambda/rad\ S^\lambda$ a right $H_{2k+1, n}$-module. 

Let $e(q^{2})$ be the least positive integer $m$ such that $[m]_{q^{2}}=1 + q^{2} + q^{4}...+ q^{2(m-1)} = 0$ if that exists, and let $e(q^{2}) = \infty$ otherwise. Recall that a partition 
$\lambda =( \lambda_{1},\ \lambda_{2},\ ..., \lambda_{f})$ of $n-2k$ is $e(q^{2})-restricted$ if $\lambda_i - \lambda_{i+1} < e(q^{2})$ for all $i \geq 1$.

For partitions $\lambda, \mu$ of $n-2k$ and $D^\mu\neq 0$, let $d_{\lambda \mu} = [S^\lambda\ :\ D^\mu]$ be the composition multiplicity of $D^\mu$ in $S^\mu$. The following classification of the simple $H_{2k+1, n}$--modules is given by Dipper and James (see \cite{DJ1}, Theorem~7.6 or \cite{Math}, Theorem~3.43). 

\begin{theorem} \label{simplemd1}
Suppose that $R$ is a field.
\begin{enumerate}
\item \{ $D^\mu$ |\ $\mu$ an $e(q^2)$-restricted partition of $n-2k$\} is a complete set of non-isomorphic simple $H_{2k+1, n}$--modules.
\item Suppose that $\mu$ is an $e(q^2)$-restricted partition of $n-2k$ and that $\lambda$ is a partition of $n-2k$. Then $d_{\mu \mu} = 1$ and $d_{\lambda \mu}  \neq 0$ only if $\lambda \unrhd\mu$.
\end{enumerate}
\end{theorem}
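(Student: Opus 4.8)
The plan is to deduce the statement from the general theory of cellular algebras of Graham and Lehrer~\cite{GL}, applied to the Murphy cellular structure of $H_{2k+1,n}$ recorded in Theorem~\ref{Hthm}. The abstract formalism is essentially routine; the one genuinely substantial ingredient is the identification of the set of partitions $\mu$ for which $D^\mu\neq 0$ with the $e(q^2)$-restricted ones, and that is where I expect all the difficulty to lie.

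First I would extract the abstract consequences. By Theorem~\ref{Hthm} the basis $\mathscr{M}$ is cellular, indexed by partitions of $n-2k$ ordered by dominance, with the span $\check{\mathscr{H}}^\lambda_{2k+1,n}$ of the strictly more dominant basis elements serving as the lower two-sided ideal and with each cell module $S^\lambda$ carrying the symmetric bilinear form $\langle\,,\,\rangle$ of~\eqref{bilinearform1}. Over a field the Graham--Lehrer theory then yields at once: $\mathrm{rad}\,S^\lambda$ is exactly the radical of $\langle\,,\,\rangle$, so $D^\lambda=S^\lambda/\mathrm{rad}\,S^\lambda$ is nonzero precisely when the form is nonzero and is then absolutely irreducible; the nonzero $D^\lambda$ are pairwise non-isomorphic and exhaust the simple $H_{2k+1,n}$-modules; and the decomposition matrix is unitriangular with respect to the cell order, that is $d_{\mu\mu}=1$ whenever $D^\mu\neq0$ and $d_{\lambda\mu}\neq0$ forces $\lambda\unrhd\mu$ (dominance rather than its opposite appearing because $S^\lambda$ is the dual Specht module, as noted after~\eqref{ct7}). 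This already establishes statement~(2) in full and reduces statement~(1) to the single claim
\[
D^\mu\neq 0 \iff \mu \text{ is } e(q^2)\text{-restricted}.
\]

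To attack this equivalence I would analyse the form~\eqref{bilinearform1} directly on the Murphy basis $\{c_{\mathfrak{s}}:\mathfrak{s}\in\STD(\mu)\}$ of $S^\mu$. One easy computation gives a first sufficient condition: taking $\mathfrak{s}=\mathfrak{t}=\mathfrak{t}^\mu$, so that $d(\mathfrak{t}^\mu)=1$ and $c_{\mathfrak{t}^\mu}=c_\mu+\check{\mathscr{H}}^\mu_{2k+1,n}$, and using $c_\mu^\ast=c_\mu$ together with the identity $c_\mu g_i=q^2c_\mu$ for $s_i\in S_\mu$ (immediate from the quadratic relation $g_i^2=(q^2-1)g_i+q^2$), one finds $c_\mu^2=\big(\prod_i\prod_{j=1}^{\mu_i}[j]_{q^2}\big)c_\mu$ and hence $\langle c_{\mathfrak{t}^\mu},c_{\mathfrak{t}^\mu}\rangle=\prod_i\prod_{j=1}^{\mu_i}[j]_{q^2}$; when every row length satisfies $\mu_i<e(q^2)$ this is a product of nonzero quantum integers, so $D^\mu\neq0$. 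The full restricted criterion is more delicate, however: this particular diagonal Gram entry can vanish for restricted $\mu$ that carry a long row, so non-vanishing of the whole form is controlled not by one entry but by the precise factorisation of the Gram determinant of $\langle\,,\,\rangle$ into quantum integers $[m]_{q^2}$, the obstruction $[e(q^2)]_{q^2}=0$ entering through the row-difference pattern $\mu_i-\mu_{i+1}$ rather than through the $\mu_i$ themselves.

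The main obstacle is exactly this last point: proving both that the form is nonzero for every $e(q^2)$-restricted $\mu$ and that it vanishes identically for every non-restricted $\mu$. This is precisely the Dipper--James theorem, and its proof requires either the explicit Gram-determinant factorisation or, equivalently, James's regularisation together with the row/column-removal branching behaviour of the modules $S^\mu$ (or a Jantzen-type sum formula). Rather than reconstruct that computation I would import it from~\cite{DJ1} (Theorem~7.6) or~\cite{Math} (Theorem~3.43), which pin down the relevant forms and supply the equivalence; combined with the Graham--Lehrer reduction of the second paragraph this completes both parts of the theorem.
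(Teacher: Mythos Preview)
The paper does not prove this theorem; it is quoted directly from Dipper--James~\cite{DJ1}, Theorem~7.6 (equivalently Mathas~\cite{Math}, Theorem~3.43) as background on the Hecke algebra. Your proposal is correct and in fact supplies more detail than the paper does: you rightly isolate the cellular-algebra consequences of Theorem~\ref{Hthm} (parametrisation of simples by the $\mu$ with nonvanishing form, and unitriangularity of the decomposition matrix, giving part~(2) at once) and correctly identify the single substantive step---that $D^\mu\neq0$ exactly when $\mu$ is $e(q^2)$-restricted---as the Dipper--James result, which you then import from the same references the paper cites. Your diagonal computation $\langle c_{\mathfrak{t}^\mu},c_{\mathfrak{t}^\mu}\rangle=\prod_i[\mu_i]_{q^2}!$ and your caveat that this particular entry does not by itself decide the restricted criterion are both accurate.
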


\begin{corollary}(\cite{Math}, Corollary 3.44) \label{cor2}
Suppose that $R$ is a field. Then the following statements are equivalent.
\begin{enumerate}
\item $H_{2k+1, n}$ is (split) semisimple;
\item $S^{\lambda} = D^{\lambda}$ for all partitions $\lambda$ of $n-2k$;
\item $e(q^2) > n-2k$.
\end{enumerate}
\end{corollary}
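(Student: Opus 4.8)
The plan is to establish $(1)\Leftrightarrow(2)$ by a dimension count coming from the cellular basis, then tie in $(3)$ using Theorem~\ref{simplemd1}, the only non-formal ingredient being the non-degeneracy of the cell form in the semisimple range. For $(1)\Leftrightarrow(2)$: by Theorem~\ref{Hthm} the Murphy basis $\mathscr{M}$ has exactly $\sum_{\lambda\vdash n-2k}|\STD(\lambda)|^{2}$ elements, and $\dim_{R}S^{\lambda}=|\STD(\lambda)|$ by \eqref{ct7}, so $\dim_{R}H_{2k+1,n}=\sum_{\lambda}(\dim_{R}S^{\lambda})^{2}$. Over a splitting field, Artin--Wedderburn gives $\dim_{R}\!\big(H_{2k+1,n}/\RAD H_{2k+1,n}\big)=\sum_{D^{\mu}\neq 0}(\dim_{R}D^{\mu})^{2}$, the sum running over a complete irredundant set of simple modules; by the general theory of cellular algebras these are precisely the non-zero $D^{\mu}$ (as is already implicit in Theorem~\ref{simplemd1}). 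Since each $D^{\lambda}$ is a quotient of $S^{\lambda}$ we have $\dim_{R}S^{\lambda}\ge\dim_{R}D^{\lambda}$, with equality if and only if $S^{\lambda}=D^{\lambda}$, and each $S^{\lambda}\neq 0$; comparing the two displayed dimensions shows that $H_{2k+1,n}$ is split semisimple exactly when $S^{\lambda}=D^{\lambda}$ for every $\lambda$, which is $(2)$.

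For $(2)\Rightarrow(3)$: if $(2)$ holds then $D^{\lambda}=S^{\lambda}\neq 0$ for every partition $\lambda$ of $n-2k$. Since non-isomorphic $\lambda$ give non-isomorphic non-zero $D^{\lambda}$ (cellular-algebra theory), Theorem~\ref{simplemd1}(1) then forces every partition of $n-2k$ to be $e(q^{2})$-restricted. But the one-row partition $(n-2k)$ is $e(q^{2})$-restricted only when $n-2k<e(q^{2})$, so $(3)$ follows.

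For the converse $(3)\Rightarrow(2)$, fix $\lambda\vdash n-2k$; it is enough to prove $\RAD S^{\lambda}=0$, i.e.\ that the Gram matrix of the bilinear form \eqref{bilinearform1} on $S^{\lambda}$, taken with respect to the Murphy basis $\{c_{\mathfrak{s}}\}_{\mathfrak{s}\in\STD(\lambda)}$, is invertible over $R$. I would realise $S^{\lambda}$ and this Gram matrix over the generic ring $\Z[t,t^{-1}]$, specialised along $t\mapsto q^{2}$, where the Gram determinant $g_{\lambda}(t)$ is known (James's hook-length formula, and its $q$-analogue due to Dipper--James; equivalently one may build Young's seminormal basis) to be, up to a unit, a product of quantum integers $[h]_{t}$ with $h$ ranging over the hook lengths of $[\lambda]$. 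Every hook length of a partition of $n-2k$ is at most $n-2k$, and $n-2k<e(q^{2})$ by $(3)$, so $[h]_{q^{2}}\neq 0$ in $R$ for all of them; hence $g_{\lambda}(q^{2})\neq 0$, the form is non-degenerate, and $S^{\lambda}=D^{\lambda}$.

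The only step that is not mere bookkeeping is this last one. The implications $(1)\Leftrightarrow(2)$ and $(2)\Rightarrow(3)$ use nothing beyond the cellular basis of Theorem~\ref{Hthm} and the classification in Theorem~\ref{simplemd1}; by contrast $(3)\Rightarrow(2)$ needs the explicit factorisation of the Specht-module Gram determinant---or, equivalently, the seminormal form in the separated case $e(q^{2})>n-2k$---which is where the real content of the Dipper--James theory enters, and which I expect to be the main obstacle.
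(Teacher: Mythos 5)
The paper offers no proof of this corollary: it is quoted verbatim as Corollary~3.44 of Mathas's book, so there is no internal argument to compare against. Your reconstruction is essentially the standard proof from that source and is correct in outline. The equivalence $(1)\Leftrightarrow(2)$ via $\dim H_{2k+1,n}=\sum_\lambda(\dim S^\lambda)^2$ versus $\dim(H_{2k+1,n}/\RAD H_{2k+1,n})=\sum_{D^\mu\neq 0}(\dim D^\mu)^2$ is exactly the general cellular-algebra argument (Graham--Lehrer Theorem~3.8), and it is worth noting that no separate splitting hypothesis is needed since $\End(D^\mu)=R$ automatically for cellular algebras. The step $(2)\Rightarrow(3)$ via the one-row partition $(n-2k)$ is the standard trick and is fine. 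For $(3)\Rightarrow(2)$ you correctly identify the real content, but the formula you quote is slightly misstated: the Dipper--James Gram determinant of $S^\lambda$ is not a plain product of quantum integers $[h]_{q^2}$ over the hook lengths of $[\lambda]$, but a product of \emph{quotients} of quantum hook lengths raised to exponents given by numbers of standard tableaux (Mathas, Theorem~3.35 and Corollary~3.38). The deduction nevertheless survives, since every quantum integer occurring in numerator or denominator is $[h]_{q^2}$ for some $h\le n-2k$, all of which are nonzero when $e(q^2)>n-2k$, so the specialised determinant is a nonzero product of units and $\RAD S^\lambda=0$. An alternative closing of $(3)\Rightarrow(1)$ that avoids the Gram determinant entirely is the Poincar\'e-polynomial criterion, $\prod_{i=1}^{n-2k}[i]_{q^2}\neq 0$ implies semisimplicity; either way the substantive input is external to the present paper, consistent with the author's decision to cite rather than prove the result.
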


\subsection{The Brauer algebra}
Brauer algebras were introduced first by Richard Brauer \cite{Br} in order to study the ${n}$-th tensor power of the defining representation of the orthogonal and symplectic groups. Afterwards, they were studied in more detail by various mathematicians. We refer the reader to work of Hanlon and Wales (\cite{HW1, HW2}), Doran, Wales and Hanlon \cite{DWH}, Graham and Lehrer \cite{GL} or K\"onig and Xi~\cite{KX}, Wenzl~\cite{W1} for more information.

The Brauer algebra is defined over the ring $\mathbb{Z}[x]$ by a basis given by diagrams with $2n$ vertices, arranged in two rows, and $n$ edges, where each vertex belongs to exactly one edge.
The edges which connect two vertices on the same row are called \textit{horizontal edges}.
The other ones are called \textit{vertical edges}. We denote by $D_{n}(x)$ Brauer algebra.
The vertices of diagrams are numbered $1$ to $n$ from left to right in both the top and the bottom.
The multiplication of two basis diagrams $d_{1}$ and $d_{2}$ is a concatenation in the following way:
We put diagram $d_{1}$ on top of $d_{2}$ such that all vertices in the bottom row of $d_{1}$ coincide with all upper vertices of $d_{2}$.
Now draw an edge from vertex $i$ in the bottom row of $d_{1}$ to vertex $i$ in top row of $d_{2}$ for all $i$.
The resulting diagram consists of parts that start and finish in top row of $d_{1}$ and bottom row of $d_{2}$ respectively, as well as some cycles that use only vertices in the middle two rows.
Let $\gamma(d_{1},\ d_{2})$ denote the number of these internal cycles. The product $d_{1} \cdot d_{2}$ in $D_{n}(x)$ is then defined to be this resulting diagram without internal cycles,
multiplied by $x$ taken to the power $\gamma(d_{1},\ d_{2})$. Here $x$ is a variable.

\begin{example} Let us consider in $D_{7}(x)$ the product of $d_{1}$ and $d_{2}$
\begin{align*}
\begin{matrix}
\begin{tikzpicture}
\draw (-9.5,0) -- (-7.9,-0.8);
\draw (-8.7,0) .. controls (-8.1,+0.3) and (-7.7,+0.3) .. (-7.1,0);
\draw (-7.9,0) .. controls (-7.3,+0.3) and (-6.1,+0.3) .. (-5.5,0);
\draw (-6.3,0) -- (-4.7,-0.8);
\draw (-4.7,0) -- (-6.3,-0.8);
\draw (-8.7,-0.8) -- (-9.5,-0.8);
\draw (-7.1,-0.8) .. controls (-6.5,-1) and (-6.1,-1) .. (-5.5,-0.8);
\filldraw [black] (-8.7,0) circle (1.2pt);
\filldraw [black] (-8.7,-0.8) circle (1.2pt);
\filldraw [black] (-7.9,0) circle (1.2pt);
\filldraw [black] (-7.9,-0.8) circle (1.2pt);
\filldraw [black] (-7.1,0) circle (1.2pt);
\filldraw [black] (-7.1,-0.8) circle (1.2pt);
\filldraw [black] (-6.3,0) circle (1.2pt);
\filldraw [black] (-6.3,-0.8) circle (1.2pt);
\filldraw [black] (-5.5,0) circle (1.2pt);
\filldraw [black] (-5.5,-0.8) circle (1.2pt);
\filldraw [black] (-4.7,0) circle (1.2pt);
\filldraw [black] (-4.7,-0.8) circle (1.2pt);
\filldraw [black] (-9.5,0) circle (1.2pt);
\filldraw [black] (-9.5,-0.8) circle (1.2pt);
\draw (-9.5,-0.8) -- (-9.5,-1.2);
\draw (-8.7,-0.8) -- (-8.7,-1.2);
\draw (-7.9,-0.8) -- (-7.9,-1.2);
\draw (-7.1,-0.8) -- (-7.1,-1.2);
\draw (-6.3,-0.8) -- (-6.3,-1.2);
\draw (-5.5,-0.8) -- (-5.5,-1.2);
\draw (-4.7,-0.8) -- (-4.7,-1.2);
\node at (-4,-0.4) {$\displaystyle  d_1 \color{black}$};
\node at (-4,-1.6) {$\displaystyle  d_2 \color{black}$};
\draw (-9.5,-1.2) -- (-8.7,-1.2);
\draw (-7.9,-1.2) -- (-4.7,-2);
\draw (-7.1,-1.2) .. controls (-6.5,-1) and (-5.3,-1) .. (-4.7,-1.2);
\draw (-6.3,-1.2) -- (-9.5,-2);
\draw (-5.5,-1.2) -- (-7.1,-2);
\draw (-8.7,-2) -- (-7.9,-2);
\draw (-6.3,-2) -- (-5.5,-2);
\filldraw [black] (-9.5,-1.2) circle (1.2pt);
\filldraw [black] (-9.5,-2) circle (1.2pt);
\filldraw [black] (-8.7,-1.2) circle (1.2pt);
\filldraw [black] (-8.7,-2) circle (1.2pt);
\filldraw [black] (-7.9,-1.2) circle (1.2pt);
\filldraw [black] (-7.9,-2) circle (1.2pt);
\filldraw [black] (-7.1,-1.2) circle (1.2pt);
\filldraw [black] (-7.1,-2) circle (1.2pt);
\filldraw [black] (-6.3,-1.2) circle (1.2pt);
\filldraw [black] (-6.3,-2) circle (1.2pt);
\filldraw [black] (-5.5,-1.2) circle (1.2pt);
\filldraw [black] (-5.5,-2) circle (1.2pt);
\filldraw [black] (-4.7,-1.2) circle (1.2pt);
\filldraw [black] (-4.7,-2) circle (1.2pt);
\end{tikzpicture}
\end{matrix}\ 
\end{align*}

\bigskip
and the resulting diagram is

\begin{align*}
\begin{matrix}
\begin{tikzpicture}
\node at (-10.5,-0.45) {$\displaystyle d_1\cdot d_2 = x^1 \color{black}$};
\draw (-9.5,0) -- (-4.7,-0.8);
\draw (-8.7,0) .. controls (-8.1,+0.2) and (-7.7,+0.2) .. (-7.1,0);
\draw (-7.9,0) .. controls (-7.3,+0.3) and (-6.1,+0.3) .. (-5.5,0);
\draw (-6.3,0) -- (-7.1,-0.8);
\draw (-4.7,0) -- (-9.5,-0.8);
\draw (-7.9,-0.8) -- (-8.7,-0.8);
\draw (-6.3,-0.8) -- (-5.5,-0.8);
\filldraw [black] (-8.7,0) circle (1.2pt);
\filldraw [black] (-8.7,-0.8) circle (1.2pt);
\filldraw [black] (-7.9,0) circle (1.2pt);
\filldraw [black] (-7.9,-0.8) circle (1.2pt);
\filldraw [black] (-7.1,0) circle (1.2pt);
\filldraw [black] (-7.1,-0.8) circle (1.2pt);
\filldraw [black] (-6.3,0) circle (1.2pt);
\filldraw [black] (-6.3,-0.8) circle (1.2pt);
\filldraw [black] (-5.5,0) circle (1.2pt);
\filldraw [black] (-5.5,-0.8) circle (1.2pt);
\filldraw [black] (-4.7,0) circle (1.2pt);
\filldraw [black] (-4.7,-0.8) circle (1.2pt);
\filldraw [black] (-9.5,0) circle (1.2pt);
\filldraw [black] (-9.5,-0.8) circle (1.2pt);
\end{tikzpicture}
\end{matrix}\ 
\end{align*}
\end{example}

In (\cite{Br}, Section 5) R. Brauer points out that each basis diagram on $D_{n}(x)$ which has exactly $2k$ horizontal edges
can be obtained in the form $\omega_{1}e_{(k)}\omega_{2}$ where $\omega_{1}$ and $\omega_{2}$ are permutations in $S_{n}$,
and $e_{(k)}$ is the following diagram:

\begin{align*}
\begin{matrix}
\begin{tikzpicture}
\node at (-9,-0.45) {$\displaystyle \cdots \color{black}$};
\node at (-5.35,-0.45) {$\displaystyle \cdots \color{black}$};
\draw (-9.6,0) -- (-10.4,0);
\draw (-9.6,-0.8) -- (-10.4,-0.8);
\draw (-7.6,-0.8) -- (-8.4,-0.8);
\draw (-7.6,0) -- (-8.4,0);
\draw (-6.8,0) -- (-6.8,-0.8);
\draw (-6,0) -- (-6,-0.8);
\draw (-4.7,0) -- (-4.7,-0.8);
\filldraw [black] (-10.4,0) circle (1.2pt);
\filldraw [black] (-10.4,-0.8) circle (1.2pt);
\filldraw [black] (-9.6,0) circle (1.2pt);
\filldraw [black] (-9.6,-0.8) circle (1.2pt);
\filldraw [black] (-8.4,0) circle (1.2pt);
\filldraw [black] (-8.4,-0.8) circle (1.2pt);
\filldraw [black] (-7.6,0) circle (1.2pt);
\filldraw [black] (-7.6,-0.8) circle (1.2pt);
\filldraw [black] (-6.8,0) circle (1.2pt);
\filldraw [black] (-6.8,-0.8) circle (1.2pt);
\filldraw [black] (-6.0,0) circle (1.2pt);
\filldraw [black] (-6.0,-0.8) circle (1.2pt);
\filldraw [black] (-4.7,0) circle (1.2pt);
\filldraw [black] (-4.7,-0.8) circle (1.2pt);
\end{tikzpicture}
\end{matrix}\ ,
\end{align*}

where each row has exactly $k$ horizontal edges.
\subsubsection{Length function for Brauer algebra $D_{n}(N)$} \label{length}
Generalizing the length of elements in reflection groups, Wenzl \cite{W2} defined a length function for a basis diagram of $D_{n}(N)$ as follows.

For a basis diagram $d$ $\in D_{n}(N)$ with exactly $2k$ horizontal edges, the definition of the length $\ell{(d)}$ is given by
$$\ell{(d)}=min\{\ell{(\omega_{1})} +\ell{(\omega_{2})} | \quad \omega_{1}e_{(k)}\omega_{2}=d, \ \omega_{1}, \omega_{2} \in {S_{n}} \}.$$
Recall that here we see a permutation $\omega$ of a symmetric group as a diagram of the Brauer algebra with no horizontal edge. The product $\omega_1 \omega_2$ is a concatenation of two diagrams $\omega_1$ and $\omega_2$.
 
As indicated in \cite{W2}, a permutation $\omega \in S_{n}$ can be written uniquely in the form \break $\omega~=~t_{1} \dots t_{n-2}t_{n-1}$,
where $t_{j} = 1$ or $t_{j} =s_js_{j -1}s_{j - 2}\dots s_{i_j} =: s_{j,i_{j}}$ with $1 \leq{i_{j}} \leq j<{n}$.
Denote $B_k$ the set of all elements of the form $t_{2}t_{4}\dots t_{2k-2}t_{2k}t_{2k+1}\dots t_{n-2}t_{n-1}$.

For $k$ an integer, $0\le k\le [n/2]$, let $\mathscr{D}_{k, n}$ be the set of all diagrams $d$ in which: A diagram has exactly $k$ horizontal edges on each row,
its top row is like a row of the diagram $e_{(k)}$, and there is no crossing between any two vertical edges. Set
\begin{align} \label{bkn}
B_{k, n}= \{ \omega \in B_k \ |
\ell(d)= \ell(\omega) \text{ with } d = e_{(k)} \omega \in \mathscr{D}_{k, n} \}.
\end{align}
This definition is going to be used in the following section on the $q$-Brauer algebra. For more detail we refer the reader to Section 3.3\cite{N}.

\section{A cellular basis of the $q$-Brauer algebra}\label{qBr}
\subsection{The $q$-Brauer algebra}From now on, we abbreviate $H_n$ replacing $\mathscr{H}_n(q^2)$.
The generic $q$-Brauer algebra, which contains the Hecke algebra of the symmetric group $H_n$ as a subalgebra, is defined below.

\begin{defn} \label{DefofBr}  Let $r$ and $q$ be invertible elements over the ring $\Z[q^{\pm{1}}, r^{\pm{1}}, (\dfrac{r-r^{-1}}{q-q^{-1}})^{\pm{1}}]$.
Moreover, if $q=1$ then assume that $r=q^N$ with $N \in \Z \setminus \{0\}$.
The $q$-Brauer algebra $Br_{n}(r^2, q^{2})$ over $\Z[q^{\pm{1}}, r^{\pm{1}}, (\dfrac{r-r^{-1}}{q-q^{-1}})^{\pm{1}}]$ is the algebra defined via generators $g_{1}$, $g_{2}$, $g_{3}$, ..., $g_{n-1}$ and $e$ and relations
\begin{enumerate}
\item[(H)] The elements $g_{1}$, $g_{2}$, $g_{3}$, ..., $g_{n-1}$  satisfy the relations of the Hecke algebra $H_{n}$;
\item[$(E_{1})$] $e^{2} = \dfrac{r-r^{-1}}{q-q^{-1}}e$;
\item[$(E_{2})$]  $eg_{i} = g_{i}e$ for $i > 2,$ $eg_{1} = g_{1}e = q^{2}e$, $eg_{2}e = rqe$ and $eg^{-1}_{2}e = (rq)^{-1}e$;
\item[$(E_{3})$] $g_{2}g_{3}g^{-1}_{1}g^{-1}_{2}e_{(2)} = e_{(2)}g_{2}g_{3}g^{-1}_{1}g^{-1}_{2},$
where $ e_{(2)} = e(g_{2}g_{3}g^{-1}_{1}g^{-1}_{2})e.$
\end{enumerate}
\end{defn}

Let
\[ g^{+}_{l, m} =
\begin{cases}
&g_{l}g_{l+1}...g_{m} \hspace{1.5cm} \text{ if $ l \le m $}; \\
&g_{l}g_{l-1}...g_{m} \hspace{1.5cm} \text{ if $ l > m $},
\end{cases}
\]
and
\[ g^{-}_{l, m} =
\begin{cases}
&g^{-1}_{l}g^{-1}_{l+1}...g^{-1}_{m} \hspace{1.5cm} \text{ if $l \le m $}; \\
&g^{-1}_{l}g^{-1}_{l-1}...g^{-1}_{m} \hspace{1.5cm} \text{ if $ l > m $,}
\end{cases}
\]
for $1 \le l,  m \le n$.

Let $k$ be an integer, $1 \le k \le [n/2]$. The elements $e_{(k)}$ in $Br_{n}(r^2, q^2)$ are defined inductively by $e_{(1)}=e$
and by
\begin{align}\label{e_k}
e_{(k+1)} = eg^{+}_{2, 2k+1}g^{-}_{1, 2k}e_{(k)}.
\end{align}

\begin{remark} \label{rm1} 1. We keep the notation $e_{(k)}$ as used in \cite{W2} and \cite{N}, which describes  both the Brauer diagram $e_{(k)}$ of the Brauer algebra and the element  $e_{(k)}$ (in \eqref{e_k}) of the $q$-Brauer algebra. 

2. The definition of the $q$-Brauer algebra above is a generic version of the one introduced by Wenzl (see \cite{W4}, Definition~2.1). This means when $r=q^{N}$, both definitions are the same. The algebra defined above is also isomorphic to another version of the $q$-Brauer algebra, $Br_n(r, q)$, used by Dung in~\cite{N}.
In fact, $Br_{n}(r^2,q^2)$ can be obtained by in $Br_{n}(r,q)$ we substitute \emph{old} $q$, $r$ and $e$ by $q^{2}$, $r^{2}$  and $(q^{-1}r)e$, respectively.
This implies that the $q$-Brauer $Br_{n}(r^2, q^{2})$ has similar properties as those of $Br_{n}(r, q)$. 

3. To relate the $q$-Brauer algebra to the Brauer algebra over a field of any characteristic, we need another version of the $q$-Brauer algebra. The definition is the following:

Fix $N \in \Z \setminus \{0\}$ and let  $[N] = 1 + q^{2} + \dots + q^{2(N-1)}$, where $q$ is an invertible element in an arbitrary commutative noetherian ring $R$ containing $\Z[q^{\pm{1}}, r^{\pm{1}}, [N]^{\pm{1}}]$. The $q$-Brauer algebra $Br_n(N)$ is an algebra over $R$ defined by generators $g_1,\ g_2,\dots,\ g_{n-1}$ and $e$ and relations $(H),\ (E_3)$ as before and 
\begin{enumerate}
\item[$(E'_{1})$] $e^{2} = [N]e$;
\item[$(E'_{2})$]  $eg_{i} = g_{i}e$ for $i > 2,$ $eg_{1} = g_{1}e = q^{2}e$, $eg_{2}e = q^{N+1}e$ and $eg^{-1}_{2}e = (q)^{-1-N}e$.
\end{enumerate}
It is clear that in the case $q=1$ the $q$-Brauer algebra $Br_n(N)$ coincides with the Brauer algebra $D_n(N)$. Notice that the other versions of the $q$-Brauer algebra recover the Brauer algebra over fields allowing to form the limit $q \rightarrow 1$, such as the field of real or complex numbers (see Remark 3.1(1) in \cite{W2} for more detail).

4. Both $Br_{n}(r^2, q^{2})$ and $Br_n(N)$ have  an $R$-linear involution $*$ defined by
$e^* = e$ and $g^*_i = g_i$ for $1 \le i \le n-1$.  
This involution is the same as the involution of $Br_{n}(r, q)$ shown to exist in Proposition~3.12~\cite{N}, and it is compatible with the involution of the Hecke algebra $H_n$ defined in Section \ref{ihsec}.

In this article, the proofs for both $Br_{n}(r^2, q^{2})$ and $Br_n(N)$ are the same. We will only give them for one version, sometimes without explicitly mentioning the other version.

5. Let $k$ be an integer, $0 \le k \le [n/2]$. By Definition \eqref{e_k} it is straightforward to check that the element $e_{(k)}$ commutes with $g_\omega$ for $\omega \in H_{2k+1, n}$, that is, $e_{(k)}g_\omega =g_\omega e_{(k)}$.

\smallskip
Recall from  \cite{N} Section 4.1 that if $k$ is an integer, $0\le k\le [n/2]$, $J_n(k)$ is the $R$-module generated by the basis elements $g_d$ of the $q$-Brauer algebra, where $d$ is a Brauer diagram whose number of vertical edges are less than or equal $n-2k$. Then $J_n(k)$ is an ideal of the $q$-Brauer algebra and
\begin{align} \label{ct9}
J_n(k) = \sum_{j=k}^{[n/2]} H_{n}e_{(j)}H_{n}.
\end{align}
\end{remark}
In the following we collect some results on $Br_n(r^2, q^2)$ that are similar to those of $Br_n(r, q)$ in Lemmas 3.3, 3.4 and Corollary 3.15\cite{N}. 

\begin{lemma} \label{tc1}
The following statements hold for the $q$-Brauer algebra $Br_{n}(r^2, q^{2})$.
\begin{itemize}
\item[(1)] $g_{2j+1} e_{(k)} = e_{(k)}g_{2j+1} = q^{2}e_{(k)}$, and
$g^{-1}_{2j+1} e_{(k)} = e_{(k)}g^{-1}_{2j+1} = q^{-2}e_{(k)}$ for $0 \leq j < k$;
\item[(2)] $e_{(j)}e_{(k)} = e_{(k)}e_{(j)} = (\dfrac{r-r^{-1}} {q-q^{-1}})^{j}e_{(k)}$  for any $j \leq k$;
\item[(3)] $g^{+}_{2i-1, 2j}e_{(k)} = g^{+}_{2j+1, 2i }e_{(k)}$ and
   $g^{-}_{2i-1, 2j}e_{(k)} = g^{-}_{2j+1, 2i }e_{(k)}$ for $ 1 \leq i \leq j < k$;
\item[(4)] $e_{(k)}g^{+}_{2l, 1} = e_{(k)}g^{+}_{2, 2l+1}$ and
   $e_{(k)}g^{-}_{2l, 1} = e_{(k)}g^{-}_{2, 2l+1}$ for $l < k$;
\item[(5)]$e_{(k)}g_{2j}g_{2j-1} = e_{(k)}g_{2j}g_{2j+1}$
   and $e_{(k)}g^{-1}_{2j}g^{-1}_{2j-1} = e_{(k)}g^{-1}_{2j}g^{-1}_{2j+1}$ for $1 \leq j < k$;
\item[(6)] $e_{(k)}g^{+}_{2j, 2i-1} = e_{(k)}g^{+}_{2i, 2j+1}$ and
   $e_{(k)}g^{-}_{2j, 2i-1} = e_{(k)}g^{-}_{2i, 2j+1}$ for $ 1 \leq i \leq j < k$;
\item[(7)] $e_{(k)}g^{-}_{2k, 2j-1 }g^{+}_{2k+1, 2j}e_{(j)} = (\dfrac{r-r^{-1}} {q-q^{-1}})^{j-1}e_{(k+1)}$ for $1 \leq j < k$;
\item[(8)] $e_{(k)}g_{2j}e_{(j)} = rq(\dfrac{r-r^{-1}} {q-q^{-1}})^{j-1}e_{(k)}$ for $1 \leq j \leq k$;
\item[(9)] $e_{(k)}H_{n}e_{(j)} \subset e_{(k)}H_{2j+1, n} \ + \sum_{m \geq k+1} H_{n} e_{(m)}H_{n},$
where $j \leq k$;
\item[(10)] $e_{(k+1)} = e_{(k)}g^{-}_{2k, 1}g^{+}_{2k+1, 2}e$.
\end{itemize}
\end{lemma}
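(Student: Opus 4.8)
The plan is to establish the ten identities in the order listed, since each one is proved by a direct computation that is allowed to invoke the preceding ones together with the defining relations $(H)$, $(E_1)$, $(E_2)$, $(E_3)$ and the inductive definition \eqref{e_k} of $e_{(k)}$. First I would dispose of (1): from $(E_2)$ one has $eg_1 = g_1 e = q^2 e$, i.e.\ the claim for $j=0$ and $k=1$; for general $j<k$ one unwinds \eqref{e_k} and pushes $g_{2j+1}$ through the Hecke factors $g^{+}_{2,2k+1}g^{-}_{1,2k}$ using the braid and commutation relations in $(H)$ until it meets a copy of $e$ in position where $eg_1 = q^2 e$ applies; the inverse statement follows by applying $*$ or by the same computation with $g^{-1}_{2j+1}$. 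Statement (2) is then an induction on $k$ using \eqref{e_k}, relation $(E_1)$ (for the base case $j=k=1$, $e^2 = \frac{r-r^{-1}}{q-q^{-1}}e$), and Remark \ref{rm1}(5) which lets $e_{(j)}$ commute past $g_\omega$ with $\omega \in H_{2j+1,n}$. Identities (3)--(6) are purely Hecke-algebra manipulations on the factors appearing in \eqref{e_k}: each asserts that two products of generators, when multiplied against $e_{(k)}$, become equal, and in each case the difference between the two sides is absorbed because the extra generators $g_1, g_3, \ldots, g_{2k-1}$ act on $e_{(k)}$ by the scalar $q^2$ (by (1)) while the braid relations reorder the rest; I would prove (3) and (4) first and then (5), (6) as their immediate consequences (indeed (6) is essentially (3) read on the other side, and (5) is the $i=j$ case of (6) after using (4)).

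The substantive identities are (7), (8), (9) and (10), and I expect (9) to be the main obstacle. For (10): starting from \eqref{e_k} with $k$ replaced by $k$, we have $e_{(k+1)} = e g^{+}_{2,2k+1} g^{-}_{1,2k} e_{(k)}$; I would use (3)--(6) and the commutation of $e_{(k)}$ with $H_{2k+1,n}$ to rewrite $e g^{+}_{2,2k+1} g^{-}_{1,2k} e_{(k)}$ as $e_{(k)} g^{-}_{2k,1} g^{+}_{2k+1,2} e$, which is exactly a "reflected" form of the recursion — this amounts to conjugating the defining formula by the involution $*$ combined with the identities already proven, so (10) should fall out cleanly once (1)--(6) are in hand. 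For (8), the base case $j=1$ is $e_{(k)} g_2 e_{(1)} = e_{(k)} g_2 e$; using (2) to pull one factor $\frac{r-r^{-1}}{q-q^{-1}}$ out of $e_{(k)}$ as $e_{(1)}\cdot(\text{scalar})^{?}$ is not quite right, so instead I would write $e_{(k)} = (\text{lower } e\text{'s}) $ via \eqref{e_k}, isolate the innermost $e g_2 e = rqe$ from $(E_2)$, and then reassemble, the leftover $e_{(j-1)}$-part contributing the scalar $(\frac{r-r^{-1}}{q-q^{-1}})^{j-1}$ by (2); the general $j$ is then induction on $j$ feeding (7). For (7), I would expand $e_{(j)}$ one step by \eqref{e_k}, use (3)/(6) to match $g^{-}_{2k,2j-1}g^{+}_{2k+1,2j}$ against the factors $g^{+}_{2,2j+1}g^{-}_{1,2j}$ appearing inside $e_{(j)}$, collapse the resulting $e\cdots e$ via $(E_1)$ producing the scalar $(\frac{r-r^{-1}}{q-q^{-1}})^{j-1}$, and recognize what remains as $e_{(k+1)}$ by \eqref{e_k}; the bookkeeping of which generators survive and which are swallowed is the delicate part.

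Finally (9): the claim is the containment $e_{(k)}H_n e_{(j)} \subseteq e_{(k)}H_{2j+1,n} + \sum_{m\ge k+1} H_n e_{(m)} H_n$ for $j\le k$. Here I would argue by downward induction on the number of "crossings" or, equivalently, on length in the sense of Section \ref{length}: write an arbitrary $h\in H_n$ in terms of the coset representatives, so that $h = h_1 \sigma h_2$ with $h_1, h_2 \in H_{2k+1,n}$ (commuting past $e_{(k)}$ on the left and $e_{(j)}$ on the right by Remark \ref{rm1}(5)) and $\sigma$ built from the generators $g_1,\ldots,g_{2k}$; then repeatedly apply relations $(E_2)$ (the rules $eg_1 = q^2e$, $eg_2 e = rqe$, $eg_2^{-1}e = (rq)^{-1}e$) and identities (1),(8) to reduce each generator $\sigma$ can contain. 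Every time a generator $g_2$ gets trapped between two $e$'s we either produce a scalar multiple of a lower term (landing in $e_{(k)}H_{2j+1,n}$) or, when the reduction increases the number of horizontal edges, we land in $H_n e_{(m)}H_n$ with $m\ge k+1$; the point is that no term can stay "in between" with the same number of edges and a nontrivial $g_2$-crossing. This is precisely the kind of induction on the length function that Wenzl sets up in \cite{W2} and that \cite{N} carries out for $Br_n(r,q)$ (Lemma 3.4 there), so I would follow that template, and the hard part is organizing the induction so that the reduction of a single trapped $g_2^{\pm 1}$ is seen to strictly decrease the relevant statistic while keeping the output inside the asserted sum.
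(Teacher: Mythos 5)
Your plan does not match what the paper actually does: the paper gives no proof of Lemma \ref{tc1} at all. It states explicitly that these identities ``are similar to those of $Br_n(r,q)$ in Lemmas 3.3, 3.4 and Corollary 3.15 \cite{N}'' and relies on Remark \ref{rm1}(2), which says that $Br_n(r^2,q^2)$ is obtained from $Br_n(r,q)$ by the substitution $q\mapsto q^2$, $r\mapsto r^2$, $e\mapsto (q^{-1}r)e$, so that every identity proved in \cite{N} (ultimately going back to Wenzl \cite{W2}) transfers verbatim after adjusting scalars. What you propose instead is a self-contained verification from the relations $(H)$, $(E_1)$--$(E_3)$ and the recursion \eqref{e_k}. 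That is a legitimate and in fact more transparent route --- it is essentially the proof carried out in the cited sources --- and it buys independence from \cite{N}; the paper's route buys brevity at the cost of making the reader chase the substitution through ten separate identities (note, for instance, that the scalar $rq$ in (8) is exactly what $e g_2 e = re$ in the $Br_n(r,q)$ normalization becomes after $e\mapsto q^{-1}re$).

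Two caveats on your sketch. First, your derivation of (10) by ``applying $*$'' to \eqref{e_k} presupposes $e_{(k+1)}^*=e_{(k+1)}$, which is not available from the mere existence of $*$ (Remark \ref{rm1}(4) gives $e^*=e$ and $g_i^*=g_i$, not $*$-invariance of the composite $e_{(k+1)}$); in \cite{N} the $*$-invariance of $e_{(k)}$ is itself proved via an identity equivalent to (10), so you must either prove (10) by direct manipulation with (1)--(6) or prove $e_{(k+1)}^*=e_{(k+1)}$ independently --- as written the step is circular. Second, (9) is the only statement here that is not a finite computation, and your treatment of it remains a plan: the claim that a trapped $g_2^{\pm1}$ either produces a scalar or strictly increases the number of horizontal edges is exactly the content of Wenzl's Lemma 3.4 in \cite{W2}, and the induction on the length function needs to be set up with the coset decomposition of $S_n$ relative to $S_{2k+1,n}$ made precise before one can assert that the process terminates inside $e_{(k)}H_{2j+1,n}+\sum_{m\ge k+1}H_ne_{(m)}H_n$. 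Neither point invalidates your approach, but both would have to be filled in before the argument is complete.
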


\begin{lemma}(\cite{N}, Lemma 4.10) \label{lm4}
Let $k,\ l$ be integers, $0< k \leq l \le [n/2]$, and let $u$ be a permutation in $B_{l, n}$ and $\pi$ a permutation in $S_{2l+1, n}$.
Then there exist $a_{(\omega, u)} \in R$, for $v \in B_{k, n}$ and $\omega \in S_{2k+1, n}$, such that
\begin{align*}
e_{(k)}g_{\pi}g_u  = \sum_{\substack{\omega \in S_{2k+1, n} \\ v \in B_{k, n}} } a_{(\omega, v)} e_{(k)} g_\omega g_v.
\end{align*}
\end{lemma}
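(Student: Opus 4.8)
The plan is to induct on $l-k$, with a secondary induction on the length of the permutation part of $g_u$. When $l=k$ there is nothing to do: then $\pi\in S_{2k+1,n}$ and $u\in B_{k,n}$, so $e_{(k)}g_\pi g_u$ is already a single term of the required form. For $l>k$ I would first dispose of $g_\pi$. Since $2l+1>2k+1$, the subgroup $S_{2l+1,n}$ is a standard parabolic subgroup of $S_{2k+1,n}$, so $\pi\in S_{2k+1,n}$ and, by Remark~\ref{rm1}(5), $g_\pi$ commutes with $e_{(k)}$. It therefore suffices to expand $e_{(k)}g_u$ as $\sum b_{(\omega,v)}\,e_{(k)}g_\omega g_v$ with $\omega\in S_{2k+1,n}$ and $v\in B_{k,n}$: multiplying such an expansion on the left by $g_\pi$ and rewriting each $g_\pi g_\omega$ (an element of $H_{2k+1,n}$) in the basis $\{g_{\omega'}:\omega'\in S_{2k+1,n}\}$ then yields the asserted formula for $e_{(k)}g_\pi g_u$.

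It remains to straighten $e_{(k)}g_u$ for $u\in B_{l,n}$ with $l>k$. I would write $u$ in its canonical form $u=t_2t_4\cdots t_{2l}\,t_{2l+1}\cdots t_{n-1}$. The tail factors $t_{2l+1},\dots,t_{n-1}$ and the parts of the remaining factors supported on indices $\ge 2k+1$ either commute with $e_{(k)}$ by Remark~\ref{rm1}(5) or, in the odd case $g_{2j+1}$ with $j<k$, are absorbed into $e_{(k)}$ as a power of $q^{\pm 2}$ by Lemma~\ref{tc1}(1), and relation $(E_2)$ moves any $g_i$ with $i>2$ past the generator $e$ occurring inside $e_{(k)}$. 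The substantive point is the ``special'' interior factors $t_{2k+2},t_{2k+4},\dots,t_{2l}$, which are exactly what makes $u$ an element of $B_{l,n}$ rather than of $B_{k,n}$. Using the index-shift identities of Lemma~\ref{tc1}(3)--(6) one rewrites the corresponding subword of $e_{(k)}g_u$ so that, after the rewriting, $e_{(k)}g_u$ becomes a scalar multiple of $e_{(k)}g_{u'}$ with $u'$ a permutation whose canonical form has the shape of a $B_{l-1}$-element --- equivalently, becomes a combination of terms $e_{(k)}g_{\pi'}g_{u'}$ with $\pi'\in S_{2l-1,n}$ and $u'\in B_{l-1,n}$, to which the inductive hypothesis applies. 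Whenever an index shift forces an application of the Hecke relation $g_i^2=(q^2-1)g_i+q^2$, the extra term has strictly shorter permutation part and is handled by the secondary induction.

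The main obstacle is the combinatorial bookkeeping behind this sketch. One must check that every $u\in B_{l,n}$ (and every $\pi\in S_{2l+1,n}$) genuinely decomposes into the pieces described, that at each step the invoked identity of Lemma~\ref{tc1} really applies, and --- most delicately --- that the statistic $(l-k,\ \ell(\text{permutation part}))$ strictly decreases in lexicographic order, so that the induction terminates. This forces one to work with the no-crossing condition defining the diagrams in $\mathscr{D}_{k,n}$ and with Wenzl's length function of Section~\ref{length}, so the explicit description of $B_{k,n}$ from~\cite{N}, Section~3.3, enters essentially; once these verifications are in place the remaining steps are routine manipulations inside the Hecke algebra $H_{2k+1,n}$.
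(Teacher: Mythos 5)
First, a point of reference: the paper does not prove this lemma at all --- it is imported verbatim from \cite{N}, Lemma~4.10 --- so there is no internal argument to measure your attempt against. Judged on its own terms, your opening reduction is sound: since $S_{2l+1,n}\subseteq S_{2k+1,n}$, Remark~\ref{rm1}(5) lets you pull $g_\pi$ across $e_{(k)}$ and re-expand each product $g_\pi g_\omega$ inside the subalgebra $H_{2k+1,n}$, and the base case $l=k$ is indeed vacuous. The difficulty is that everything after this reduction is a plan rather than a proof, and the plan is unreliable exactly where the lemma has content.

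Two concrete problems. (i) Your claim that the tail factors $t_{2l+1},\dots,t_{n-1}$ commute with $e_{(k)}$ is false in general: a factor $t_j=s_js_{j-1}\cdots s_{i_j}$ may have $i_j$ as small as $1$ (Example~\ref{ex2} exhibits $s_{2,1}s_{3,2}s_{4,3}\in B_{1,5}$), so $g_{t_j}$ typically contains generators $g_1,\dots,g_{2k}$. Of these, the even-indexed ones such as $g_{2j}$ with $j\le k$ are covered by none of the clauses you invoke --- Lemma~\ref{tc1}(1) handles only odd indices, and $(E_2)$ controls $eg_2e$, not $eg_2$ by itself --- so these factors are neither ``commuted past'' nor ``absorbed'' by the tools you name. (ii) The decisive step --- rewriting the interior factors $t_{2k+2},\dots,t_{2l}$ so that $e_{(k)}g_u$ becomes a combination of terms $e_{(k)}g_{\pi'}g_{u'}$ with $u'\in B_{l-1,n}$, with your lexicographic statistic strictly decreasing --- is precisely the combinatorial content of the lemma, and you explicitly defer it as ``bookkeeping.'' In particular you never verify that the elements $v$ produced by your procedure satisfy the length and no-crossing condition \eqref{bkn} that defines $B_{k,n}$, as opposed to merely having the word shape of a $B_k$-element; without that, the terms you generate need not be basis elements and the induction does not close. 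So while your strategy is of the same general inductive flavor as the argument in \cite{N}, as written the proposal has a genuine gap: its core straightening step is asserted rather than proved, and one of the supporting claims used to set it up is incorrect.
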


\begin{theorem}(\cite{N}, Theorem 4.17 ) \label{Cellthm}Suppose that $\Lambda$ is a commutative noetherian ring which contains $R$ as a subring with the same identity. If $q$, $r$ and $\dfrac{r-r^{-1}}{q-q^{-1}}$ (resp. $[N]$) are invertible in $\Lambda$, then the $q$-Brauer algebra $Br_{n}(r^2, q^{2})$ (resp. $Br_n(N)$) over the ring $\Lambda$ is cellular.
\end{theorem}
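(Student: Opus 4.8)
The plan is to realise $Br_{n}(r^2, q^2)$ as an iterated inflation of the Hecke algebras $H_{2k+1, n} \cong H_{n-2k}(q^{2})$, $0 \le k \le [n/2]$, and then to invoke the theorem of K\"onig and Xi \cite{KX} that an iterated inflation of cellular algebras along modules carrying a bilinear form compatible with the involution is again cellular; the inner Hecke algebras are cellular by Murphy's Theorem \ref{Hthm}. (By Remark \ref{rm1}(2) this also recovers \cite{N}, Theorem 4.17 by transport of structure; I give the self-contained version because later sections use the explicit layered picture.) First I would fix a basis: using the inductive formula \eqref{e_k} for $e_{(k)}$ together with the rewriting rules of Lemma \ref{tc1} and the straightening Lemma \ref{lm4}, one shows that
\[
\mathscr{B} = \bigl\{\, g_{u}^{*}\, g_{w}\, e_{(k)}\, g_{v} \;:\; 0 \le k \le [n/2],\ u, v \in B_{k, n},\ w \in S_{2k+1, n}\,\bigr\}
\]
spans $Br_{n}(r^2, q^2)$ over the base ring $R$ (recall from Remark \ref{rm1}(5) that $e_{(k)}$ commutes with $g_{w}$ for $w \in S_{2k+1, n}$). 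Since over $\Q(r, q)$ the algebra is semisimple and isomorphic to the Brauer algebra, it has dimension $\sum_{k} |B_{k, n}|^{2}\,(n-2k)! = (2n-1)!!$; as $\mathscr{B}$ has exactly this cardinality, it is a basis of $Br_{n} \otimes_{R} \Q(r, q)$, hence $R$-linearly independent, hence an $R$-basis of $Br_{n}$, and base change yields the corresponding $\Lambda$-basis once $q$, $r$ and $\frac{r-r^{-1}}{q-q^{-1}}$ (resp.\ $[N]$) are invertible in $\Lambda$.

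Next I would read off the inflation data. By \eqref{ct9} the submodules $J_{n}(k) = \sum_{j \ge k} H_{n} e_{(j)} H_{n}$ form a chain $Br_{n} = J_{n}(0) \supseteq J_{n}(1) \supseteq \cdots \supseteq J_{n}([n/2]) \supseteq 0$ of two-sided ideals, each stable under the involution because $e_{(k)}^{*} = e_{(k)}$ and $g_{i}^{*} = g_{i}$ (Remark \ref{rm1}(4)). Let $V_{k, n}$ be the free $\Lambda$-module on $B_{k, n}$; sending the class of $g_{u}^{*} g_{w} e_{(k)} g_{v}$ to $u \otimes g_{w} \otimes v$ identifies the layer $J_{n}(k)/J_{n}(k+1)$ with $V_{k, n} \otimes_{\Lambda} H_{2k+1, n} \otimes_{\Lambda} V_{k, n}$. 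The essential point is that multiplication respects this identification in the sense of an inflation: using that $e_{(k)}$ is central for $H_{2k+1, n}$, one rewrites
\[
\bigl(g_{u_{1}}^{*} g_{w_{1}} e_{(k)} g_{v_{1}}\bigr)\bigl(g_{u_{2}}^{*} g_{w_{2}} e_{(k)} g_{v_{2}}\bigr) = g_{u_{1}}^{*} g_{w_{1}}\,\bigl(e_{(k)}\, g_{v_{1}} g_{u_{2}}^{*}\, e_{(k)}\bigr)\, g_{w_{2}} g_{v_{2}},
\]
and by Lemma \ref{tc1}(2),(8),(9) together with Lemma \ref{lm4} one has $e_{(k)}\, g_{v_{1}} g_{u_{2}}^{*}\, e_{(k)} \equiv e_{(k)}\, \varphi_{k}(v_{1}, u_{2}) \pmod{J_{n}(k+1)}$ for a $\Lambda$-bilinear form $\varphi_{k} \colon V_{k, n} \times V_{k, n} \to H_{2k+1, n}$, well defined because the layer contains $e_{(k)} H_{2k+1, n}$ as a free direct summand. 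Applying $*$ and comparing $H_{2k+1, n}$-components gives $\varphi_{k}(x, y)^{*} = \varphi_{k}(y, x)$. Hence, modulo $J_{n}(k+1)$, the product is a $\Lambda$-combination of elements $g_{u_{1}}^{*} g_{w} e_{(k)} g_{v_{2}}$, $w \in S_{2k+1, n}$, in which the middle factor transforms as $g_{w_{1}} \otimes g_{w_{2}} \mapsto g_{w_{1}} \varphi_{k}(v_{1}, u_{2}) g_{w_{2}}$ --- precisely the multiplication of the inflation of $H_{2k+1, n}$ along $V_{k, n}$ with form $\varphi_{k}$.

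Finally I would iterate: feeding Murphy's cellular basis $\mathscr{M}$ of each $H_{2k+1, n} \cong H_{n-2k}(q^{2})$ (Theorem \ref{Hthm}) into the inflation of the previous step and running the K\"onig--Xi machinery \cite{KX} for $k = [n/2], [n/2]-1, \dots, 0$, one obtains a cell datum for $Br_{n}(r^2, q^2)$ with poset $\{(k, \lambda) : 0 \le k \le [n/2],\ \lambda \vdash n-2k\}$ --- ordered so that a smaller $k$ is larger and, within a fixed $k$, by the dominance order oriented as in \eqref{idealh} --- index sets $B_{k, n} \times \STD(\lambda)$, the involution above, and cell basis elements $g_{u}^{*}\, c_\mathfrak{st}\, e_{(k)}\, g_{v}$ for $\mathfrak{s}, \mathfrak{t} \in \STD(\lambda)$, where $c_\mathfrak{st} \in H_{2k+1, n}$ is the Murphy element. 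Axiom (C1) is immediate from $e_{(k)}^{*} = e_{(k)}$ and $(c_\mathfrak{st})^{*} = c_\mathfrak{ts}$; axiom (C2) follows by combining the straightening $e_{(k)} g_{v} a \equiv \sum a_{(\omega, v')} e_{(k)} g_{\omega} g_{v'} \pmod{J_{n}(k+1)}$ of Lemma \ref{lm4} with Murphy's relation \eqref{ct6}, the contributions coming from $\check{\mathscr{H}}^{\lambda}_{2k+1, n}$ and from $J_{n}(k+1)$ both falling into strictly smaller cells. The argument for $Br_{n}(N)$ is word for word the same after replacing $\frac{r-r^{-1}}{q-q^{-1}}$ by $[N]$ and $rq$ by $q^{N+1}$, since these parameters enter only through Lemma \ref{tc1}. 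The hard part will be the middle step: showing that $e_{(k)} H_{n} e_{(k)}$ collapses modulo $J_{n}(k+1)$ onto the single copy $e_{(k)} H_{2k+1, n}$ and extracting the form $\varphi_{k}$ with the correct $*$-symmetry --- this is where Lemma \ref{tc1}(9) and the straightening Lemma \ref{lm4} do the real work, and where one must check that the ``no crossing between vertical edges'' normalisation makes $B_{k, n}$ the correct index set, so that each layer is free of the expected rank.
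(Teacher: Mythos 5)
Your proposal is essentially correct, but it follows the route of the cited source rather than of this paper. Theorem \ref{Cellthm} is quoted here from \cite{N}, Theorem 4.17, where cellularity is proved exactly as you outline: one exhibits $Br_n(r^2,q^2)$ as an iterated inflation of the Hecke algebras $H_{2k+1,n}$ along the free modules on $B_{k,n}$, with the bilinear forms $\varphi_k$ extracted from $e_{(k)}H_ne_{(k)}\subseteq e_{(k)}H_{2k+1,n}+J_n(k+1)$ (Lemma \ref{tc1}(9)), and then invokes the K\"onig--Xi theorem. The present paper deliberately takes a different path to the same conclusion: it bypasses the inflation machinery entirely, writes down the explicit candidate basis $x^\lambda_{(\mathfrak{s},u)(\mathfrak{t},v)}=g_u^*g_{d(\mathfrak{s})}^*m_\lambda g_{d(\mathfrak{t})}g_v$ (which is the same set of elements $g_u^*c_{\mathfrak{st}}e_{(k)}g_v$ you arrive at in your last step), and verifies the Graham--Lehrer axioms (C1)--(C3$'$) directly by the computations in Lemma \ref{bsofBr} and Theorem \ref{Mthm}. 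What each approach buys: yours (and \cite{N}'s) yields structural information about the layers $J_n(k)/J_n(k+1)\cong V_{k,n}\otimes H_{2k+1,n}\otimes V_{k,n}$ with comparatively little explicit computation, but leaves the cellular basis and the bilinear forms on cell modules implicit; the paper's direct route is computationally heavier but produces an explicit basis of each cell module \eqref{ct11} and an explicit form \eqref{ct12}, which is precisely what Sections 4 and 5 need for the classification of simples and the Gram-determinant criteria. Two small points to watch in your write-up: the well-definedness of $\varphi_k$ does require knowing that $e_{(k)}H_{2k+1,n}$ is free of rank $|S_{2k+1,n}|$ modulo $J_n(k+1)$ (you note this, and it follows from the basis \eqref{rm} since $\boldsymbol{1}\in B_{k,n}$); and the full K\"onig--Xi axioms also demand control of the action of arbitrary algebra elements on a layer, not merely of products of two layer elements --- that is supplied by Lemma \ref{lm4}/Corollary \ref{lm3}, which you correctly identify as carrying the real weight.
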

The following statement gives an explicit basis for the $q$-Brauer algebra $Br_n(r^2,q^2)$. The proof follows from Theorem~3.13, Propositions~3.14 and~4.12 in \cite{N}. Note that the same basis was used to show cellularity of the $q$-Brauer algebra $Br_n(r,q)$ in \cite{N}. That basis, however, was not proved to be a cellular basis.

\begin{theorem}\label{orgthm}
The $q$-Brauer algebra $Br_n(r^2,q^2)$ (resp. $Br_n(N)$) is freely generated as an $R$-module by the basis
\begin{align} \label{rm}
\{\ g^{*}_u e_{(k)}g_\pi g_v \ |\ u, v\in B_{k, n} \text{ and } \pi \in S_{2k+1, n} \text{\ \ for \  } 0 \leq k \leq [n/2]\ \}.
\end{align}
Moreover, the following statements hold.
\begin{enumerate}
\item The involution $*$ satisfies
\begin{align*}
* :g^{*}_ug_\pi e_{(k)}g_v \mapsto g^{*}_{v}g^{*}_{\pi}e_{(k)}g_{u}
\end{align*}
for all  $u, v\in B_{k, n} \text{ and } \pi \in S_{2k+1, n}$.
\item  Suppose that $b\in Br_n(r^2,q^2)$ and let $k$ be an integer, $0\le k\le [n/2]$.\\
If $u, v \in B_{k, n} \text{ and } \pi \in S_{2k+1, n}$, then there exist
$v_1 \in B_{k, n} \text{ and } \pi_1 \in S_{2k+1, n}$ such that
\begin{align}\label{rm:1}
g^{*}_u  e_{(k)}g_\pi g_v b \equiv
\sum_{\substack{\pi_1\in S_{2k+1, n}\\ v_1 \in B_{k, n}}} a_{(\pi_1, v_1)} g^{*}_ue_{(k)}g_{\pi_1}g_{v_1} \text{ mod $J_n(k+1).$ }
\end{align}
\end{enumerate}
\end{theorem}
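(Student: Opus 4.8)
The plan is to prove Theorem~\ref{orgthm} by reducing everything to the already-known basis~\eqref{rm} together with the two structural lemmas, Lemma~\ref{tc1} and Lemma~\ref{lm4}, and to the ideal description~\eqref{ct9}. Since the freeness of~\eqref{rm} and the involution formula in part (1) follow directly from the cited results of~\cite{N} (Theorem~3.13, Propositions~3.14 and~4.12) — the involution formula being an immediate consequence of $e_{(k)}^*=e_{(k)}$, $g_i^*=g_i$ and the anti-automorphism property — the real content is part (2), the ``straightening'' relation~\eqref{rm:1}. So I would state parts (1) and the freeness quickly, citing~\cite{N} and Remark~\ref{rm1}(4), and then concentrate on~\eqref{rm:1}.

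For part (2), first I would reduce to the case where $b$ is a generator: it suffices to prove~\eqref{rm:1} when $b\in\{g_1^{\pm1},\dots,g_{n-1}^{\pm1},e\}$, since the general case follows by induction on the length of a word in the generators, using that $J_n(k+1)$ is a two-sided ideal (Remark~\ref{rm1}, equation~\eqref{ct9}) so that the congruence is preserved under right multiplication. Next, working modulo $J_n(k+1)=\sum_{m\ge k+1}H_ne_{(m)}H_n$, I would move the generator $b$ leftward past $g_v$. Write $g_vb$: since $v\in B_{k,n}$ and $b$ is a generator, $g_vb$ is a product of $g_i^{\pm1}$'s; I must rewrite $e_{(k)}g_\pi g_v b$ in the form $\sum a_{(\pi_1,v_1)}e_{(k)}g_{\pi_1}g_{v_1}$ plus something in $J_n(k+1)$. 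The two essential tools are: (i) Lemma~\ref{lm4}, which says precisely that $e_{(k)}g_\pi g_u$ (for $u\in B_{l,n}$, $l\ge k$, $\pi\in S_{2l+1,n}$) can be rewritten as a combination of the $e_{(k)}g_\omega g_v$ with $\omega\in S_{2k+1,n}$, $v\in B_{k,n}$ — this handles the ``generic'' case once one has massaged $g_vb$ into the shape $g_\pi g_u$; and (ii) Lemma~\ref{tc1}, especially parts (1),(5),(6),(7),(8),(10), together with Remark~\ref{rm1}(5) ($e_{(k)}$ commutes with $H_{2k+1,n}$), to handle the cases where the multiplication by $b$ either creates a new horizontal edge (pushing us into $J_n(k+1)$, so contributing $0$) or collides with the ``cap'' part of $e_{(k)}$ and produces a scalar multiple of $e_{(k)}$ or $e_{(k+1)}$.

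More concretely, I would organize the case analysis on how the rightmost generator $b=g_j^{\pm1}$ interacts with the Brauer diagram underlying $e_{(k)}g_\pi g_v$. If the vertical edge that $b$ acts on does not meet a horizontal strand, then $g_vb$ is again (up to applying the unique-reduced-form decomposition $\omega=t_1\cdots t_{n-1}$ recalled before~\eqref{bkn}) of the form $g_{\pi'}g_{v'}$ with $v'\in B_{k,n}$ possibly after using the braid relations and Lemma~\ref{tc1}(3),(6) to normalize, and then Lemma~\ref{lm4} (with $l=k$) finishes. If $b$ ``closes a cap'', the standard diagram calculus shows the number of horizontal edges can only stay the same or increase; if it increases we land in $J_n(k+1)$ and the term vanishes mod $J_n(k+1)$; if it stays the same, we get a scalar (a power of $q^2$, or $rq$, or $(r-r^{-1})/(q-q^{-1})$) times $e_{(k)}g_{\pi''}g_{v''}$ by Lemma~\ref{tc1}(1),(7),(8), again of the required shape. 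For $b=e$ I would use $(E_1),(E_2)$ and Lemma~\ref{tc1}(2),(9): part (9) literally says $e_{(k)}H_ne_{(j)}\subset e_{(k)}H_{2k+1,n}+\sum_{m\ge k+1}H_ne_{(m)}H_n$, which is exactly the statement needed to absorb a factor of $e$ back into the desired form modulo $J_n(k+1)$.

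The main obstacle I anticipate is the bookkeeping in the ``generic'' case: after right-multiplying by a generator, the word $g_vb$ need not be in the normal form defining $B_{k,n}$ (it may have crossings among vertical edges, or the $t_j$-decomposition may break), so one must actually invoke Lemma~\ref{lm4} with some care about which $B_{l,n}$ and which $S_{2l+1,n}$ the intermediate element lands in, and verify that the hypotheses of Lemma~\ref{lm4} are met — in particular that the permutation part genuinely lies in $S_{2\ell+1,n}$ after the manipulation. This is where the length function $\ell$ and the characterization~\eqref{bkn} of $B_{k,n}$ as length-minimal representatives do the real work, and it is essentially the same argument as in the proof of cellularity in~\cite{N} (Theorem~4.17), so I would lean on those computations rather than redo them, pointing out that~\eqref{rm:1} is in fact implicit in that proof. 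Everything else — the involution formula, freeness, and the reduction to generators — is formal.
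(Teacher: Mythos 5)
The paper gives no argument for this theorem beyond the single sentence ``The proof follows from Theorem~3.13, Propositions~3.14 and~4.12 in \cite{N}'', and your proposal likewise defers the substantive computations (freeness, the involution formula, and the straightening relation~\eqref{rm:1}) to \cite{N}, so the two are in essential agreement. Your added outline of part (2) --- reduction to generators, rewriting $g_vb$ and invoking Lemma~\ref{lm4} and Lemma~\ref{tc1} modulo the ideal $J_n(k+1)$ of~\eqref{ct9}, with the normalization of the permutation part into $S_{2k+1,n}\cdot B_{k,n}$ correctly flagged as the real work --- is a faithful reconstruction of what the cited results of \cite{N} actually establish, so no gap.
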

For $k$ an integer, $0 \le k \le [n/2]$, the $R$-module $J_n(k+1)$ has a basis 
\begin{align} \label{J_{k+1}}
\{\ g^{*}_u e_{(l)}g_\pi g_v \ |\ u, v\in B_{l, n} \text{ and } \pi \in S_{2l+1, n} \text{\ \ for all \  } k < l \leq [n/2]\ \}.
\end{align}

\begin{corollary}\label{lm3}
Let $k$ be an integer, $0< k \le [n/2]$. If $b\in Br_n(r^2,q^2)$, $u\in B_{k, n}$,
then there exist $a_{(\omega, v)} \in R$, for $\omega\in S_{2k+1, n} \text{ and } v \in B_{k, n}$, such that
\begin{align*}
e_{(k)} g_u b\equiv \sum_{\substack{\omega\in S_{2k+1, n}\\ v \in B_{k, n}}} a_{(\omega, v)}e_{(k)}g_{\omega}g_{v} \mod{J_n(k+1)}.
\end{align*}
\end{corollary}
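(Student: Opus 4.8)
The plan is to derive Corollary~\ref{lm3} as a special case of Theorem~\ref{orgthm}(2), exploiting Remark~\ref{rm1}(5) to absorb the extra factor $g_v$ that Theorem~\ref{orgthm}(2) carries but the corollary does not. First I would observe that the corollary is exactly the statement of \eqref{rm:1} read modulo $J_n(k+1)$, but with the outer left factor $g^*_u$ removed and with $v$ specialised to the identity permutation $1 \in B_{k,n}$ (note $g_1 = 1$, so $e_{(k)} g_1 = e_{(k)}$). So the first step is: apply Theorem~\ref{orgthm}(2) with the element $g_u b \in Br_n(r^2,q^2)$ in place of $b$, with the index $u$ in the statement taken to be the identity, and with $v$ also the identity. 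This immediately gives $a_{(\pi_1,v_1)} \in R$ with
\begin{align*}
e_{(k)} g_u b \equiv \sum_{\substack{\pi_1 \in S_{2k+1,n}\\ v_1 \in B_{k,n}}} a_{(\pi_1, v_1)}\, e_{(k)} g_{\pi_1} g_{v_1} \mod J_n(k+1),
\end{align*}
which is the assertion after renaming $\pi_1 \rightsquigarrow \omega$ and $v_1 \rightsquigarrow v$.

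There is a small gap to close: Theorem~\ref{orgthm}(2) as literally stated fixes the \emph{triple} $(u,\pi,v)$ on the left and produces a congruence for $g^*_u e_{(k)} g_\pi g_v b$; I want the version where the left $g^*_u$ is simply absent. The cleanest way to handle this is to note that \eqref{rm} with $k$ fixed and $u = 1$ already contains all elements $e_{(k)} g_\pi g_v$, and that the proof of Theorem~\ref{orgthm}(2) (i.e.\ Lemma~\ref{lm4} together with Lemma~\ref{tc1}(9) and the basis description \eqref{rm}) establishes the congruence for the form $e_{(k)} g_\pi g_v b$ directly, since the argument never uses the presence of $g^*_u$ on the left — left-multiplying the final congruence by $g^*_u$ recovers the stated form. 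Alternatively, and perhaps more transparently for the reader, I would invoke Lemma~\ref{lm4} almost verbatim: write $b$ in the basis \eqref{rm}, so that $g_u b$ is a linear combination of terms $g_u g^*_{u'} e_{(l)} g_{\pi'} g_{v'}$ with $l \ge 0$; the terms with $l > k$ already lie in $J_n(k+1)$ by \eqref{J_{k+1}}, so modulo $J_n(k+1)$ we may assume each surviving term has $l \le k$. Then $e_{(k)} g_u b$ becomes a combination of $e_{(k)} (\text{element of } H_n) e_{(l)} g_{\pi'} g_{v'}$ with $l \le k$, and Lemma~\ref{tc1}(9) rewrites $e_{(k)} H_n e_{(l)}$ inside $e_{(k)} H_{2l+1,n} + \sum_{m \ge k+1} H_n e_{(m)} H_n$; the second summand is in $J_n(k+1)$, and for the first we use $k \ge l$ so $H_{2l+1,n} \supseteq H_{2k+1,n}$-cosets, and Lemma~\ref{lm4} (applied with the roles of $k$ and $l$ as there) collapses everything into the desired span of $e_{(k)} g_\omega g_v$ with $\omega \in S_{2k+1,n}$, $v \in B_{k,n}$.

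The step I expect to require the most care is precisely the bookkeeping in this last reduction: making sure that after replacing $e_{(k)} H_n e_{(l)}$ via Lemma~\ref{tc1}(9) one genuinely lands in the hypothesis range of Lemma~\ref{lm4} (which requires $0 < k \le l$, whereas here we have $l \le k$, so one must apply it with the outer index being the \emph{smaller} of the two, reading the $e_{(k)}$ on the left as $e_{(k)} = e_{(k)}$ times lower $e_{(l)}$'s via Lemma~\ref{tc1}(2)), and tracking that the coefficients produced lie in $R$ — this uses invertibility of $\tfrac{r-r^{-1}}{q-q^{-1}}$ (resp.\ $[N]$) exactly as in Theorem~\ref{Cellthm}. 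Everything else is a direct appeal to statements already proved. I would keep the written proof to two or three lines: "Apply Theorem~\ref{orgthm}(2) with $u = v = 1$ and $g_u b$ in place of $b$; alternatively argue directly via Lemmas~\ref{lm4} and~\ref{tc1}(9) as above," since the substance is entirely contained in the cited results.
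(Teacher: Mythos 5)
Your first paragraph is exactly the paper's (implicit) argument: the corollary is stated without proof precisely because it is the specialization of Theorem~\ref{orgthm}(2) to the case where the outer index $u$ and $\pi$ are the identity (both legitimately lie in $B_{k,n}$ and $S_{2k+1,n}$ respectively, as Example~\ref{ex2} confirms for $B_{k,n}$), so that $g^*_{\boldsymbol{1}}=1$ and the left factor is literally absent. The ``small gap'' you then set out to close is therefore not a gap at all, and the subsequent re-derivation via Lemmas~\ref{lm4} and~\ref{tc1}(9) is unnecessary; your two-to-three-line version is the right one.
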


\subsection{Main theorem} \label{Mbasis}
For $k$ an integer, $0 \le k \le [n/2]$, let
\begin{center}
$\Lambda_n := \{ (k, \lambda)\ | \text{ for all}\ 0 \le k \le [n/2], \text{ and } \lambda \text{ is a partition of } n-2k \}.$
\end{center}

For $(k, \mu) \in \Lambda_n$, define the element
\begin{align} \label{ct4}
m_\mu=e_{(k)}c_\mu=c_\mu e_{(k)} \text{\hspace{1cm} where $c_\mu$ is defined in \eqref{ct16}.}
\end{align}
 

\begin{example}
Let $n=10$ and $\mu=(3,2,1)$. The example in \eqref{tabex0} yields the subgroup
$S_\mu=\langle s_5,s_6, s_8\rangle$
and $m_\mu=e_{(2)}\sum_{\sigma\in S_\mu}g_\sigma =e_{(2)}(1+g_5)(1+g_6+g_6g_5)(1+g_8)$.
\end{example}

For $(k, \lambda) \in \Lambda_n$, define
$\mathcal{I}_{n}(k, \lambda)$ to be the set of ordered pairs
\begin{align}\label{index:1}
{\mathcal{I}}_{n}(k,\lambda)= \STD(\lambda) \times  B_{k, n}
= \left\{(\mathfrak{s},u):
\mathfrak{s}\in\STD(\lambda)\text{ and } u\in B_{k, n}
\right\}.
\end{align}
Let $\check{Br}_n^\lambda$ be the $R$-module with spanning set
\begin{align} \label{basisofBr}
\left\{ x^\mu_{(\mathfrak{s}, u)(\mathfrak{t}, v)}:= g^{*}_ug_{d(\mathfrak{s})}^*m_\mu
g_{d(\mathfrak{t})}g_v\,\bigg|\,
\begin{matrix}
\text {$(\mathfrak{s},u),(\mathfrak{t},v)\in \mathcal{I}_{n}(l,\mu)$} \\
\text{$\mu \rhd \lambda$\, for $(l, \mu), (k, \lambda) \in \Lambda_n$}
\end{matrix}
\right\}.
\end{align} 

\begin{lemma} \label{bsofBr}
Suppose that $(k, \lambda) \in \Lambda_n$, then $J_n(k+1)\subseteq\check{Br}_n^\lambda$ and $\check{Br}_n^\lambda$ is an ideal of the $q$-Brauer algebra.  
\end{lemma}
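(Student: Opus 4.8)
The plan is to prove both assertions simultaneously by analysing the defining spanning set \eqref{basisofBr} of $\check{Br}_n^\lambda$ in terms of the basis \eqref{rm} of Theorem~\ref{orgthm}. First I would establish the inclusion $J_n(k+1)\subseteq\check{Br}_n^\lambda$. By \eqref{J_{k+1}} the module $J_n(k+1)$ is spanned by elements $g^*_ue_{(l)}g_\pi g_v$ with $u,v\in B_{l,n}$, $\pi\in S_{2l+1,n}$ and $l>k$; for each such $l$ the partition $\mu=(n-2l)$ (say, the one-row partition, or indeed any partition of $n-2l$) satisfies $\mu\rhd\lambda$ since $|\mu|=n-2l<n-2k=|\lambda|$, using clause (1) of the dominance order. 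So it suffices to show every $g^*_ue_{(l)}g_\pi g_v$ lies in the span of the $x^\mu_{(\mathfrak{s},a)(\mathfrak{t},b)}$ with $\mu\vdash n-2l$. Writing $\pi=d(\mathfrak{s})^{-1}\sigma\, d(\mathfrak{t})$ with $\sigma\in S_\mu$ decomposes $g_\pi$ (up to lower-order terms in the Hecke algebra, controlled by Theorem~\ref{Hthm}) as a linear combination of $g^*_{d(\mathfrak{s})}c_\mu g_{d(\mathfrak{t})}$; since $e_{(l)}$ commutes with everything in $H_{2l+1,n}$ (Remark~\ref{rm1}(5)) and $m_\mu=e_{(l)}c_\mu$, this rewrites $g^*_ue_{(l)}g_\pi g_v$ as a combination of $x^\mu_{(\mathfrak{s},u)(\mathfrak{t},v)}$'s, modulo $J_n(l+1)$; induction on $l$ downward from $[n/2]$ closes this off. (Alternatively one invokes Murphy's theorem directly inside the subalgebra $e_{(l)}H_{2l+1,n}\cong H_{2l+1,n}$.)

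Next I would show $\check{Br}_n^\lambda$ is a right ideal; the left-ideal property then follows by applying the involution $*$, since \eqref{basisofBr} is visibly $*$-stable (Theorem~\ref{orgthm}(1) together with Theorem~\ref{Hthm}(1) give $(x^\mu_{(\mathfrak{s},u)(\mathfrak{t},v)})^*=x^\mu_{(\mathfrak{t},v)(\mathfrak{s},u)}$, up to reindexing, which keeps the same $\mu\rhd\lambda$). For the right-ideal property, take a generator $x^\mu_{(\mathfrak{s},u)(\mathfrak{t},v)}=g^*_ug^*_{d(\mathfrak{s})}m_\mu g_{d(\mathfrak{t})}g_v$ with $\mu\rhd\lambda$, and $b\in Br_n(r^2,q^2)$. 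Apply Theorem~\ref{orgthm}(2) to the element $g^*_{(\mathrm{something})}e_{(l)}g_{d(\mathfrak{t})}g_v$: there exist $\pi_1\in S_{2l+1,n}$, $v_1\in B_{l,n}$ with
\begin{align*}
g^*_{u}e_{(l)}(c_\mu g_{d(\mathfrak{t})})g_v\, b \equiv \sum_{\pi_1,v_1} a_{(\pi_1,v_1)}\, g^*_{u}e_{(l)}(c_\mu\,)\,g_{\pi_1}g_{v_1}\ \ \mathrm{mod}\ J_n(l+1),
\end{align*}
where I have used Remark~\ref{rm1}(5) to pull $c_\mu=c_\mu^{(l)}$ through and keep track of it; more precisely, one first absorbs $g^*_{d(\mathfrak{s})}$ and the right-multiplication by $b$ into "$b'$" and applies \eqref{rm:1}. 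The resulting terms $g^*_ue_{(l)}c_\mu g_{\pi_1}g_{v_1}$ must then be re-expressed in the Murphy form: by Theorem~\ref{Hthm}(2) applied inside $H_{2l+1,n}$ (equivalently, using that $c_\mu g_{\pi_1}\in \sum a_{\mathfrak{t}'}\,c_\mu g_{d(\mathfrak{t}')} + \check{\mathscr{H}}^\mu_{2l+1,n}$), each such term lies in $\sum x^\mu_{(\mathfrak{s},u)(\mathfrak{t}',v_1)}R + (\text{terms indexed by }\nu\rhd\mu)\subseteq\check{Br}_n^\lambda$, using $\nu\rhd\mu\rhd\lambda$. Finally $J_n(l+1)\subseteq J_n(k+1)\subseteq\check{Br}_n^\lambda$ by the first part, so the whole expression stays in $\check{Br}_n^\lambda$.

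The main obstacle will be the bookkeeping in the middle step: one must carefully separate the "Brauer part" (the $e_{(l)}$-factors and the $B_{l,n}$-components, handled by Theorem~\ref{orgthm}(2) / Corollary~\ref{lm3} / Lemma~\ref{lm4}) from the "Hecke part" (the $c_\mu$-factor and the $\STD(\mu)$-components, handled by Murphy's Theorem~\ref{Hthm}), and verify that multiplying on the right by $b$ never produces a term indexed by a partition $\nu$ with $\nu\not\rhd\lambda$ — only terms with strictly larger $\nu$ (in dominance, possibly of smaller size, i.e.\ more horizontal edges) or with the same $\mu$ appear. The key points making this work are: (i) $e_{(l)}$ commutes with the Hecke subalgebra $H_{2l+1,n}$ carrying $c_\mu$, so the two parts genuinely decouple; (ii) passing to $J_n(l+1)$ only increases $k$, hence only strengthens "$\rhd\lambda$"; and (iii) Murphy's straightening \eqref{ct6} never lowers the partition. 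Once these are in place the inclusion is immediate and the ideal property is a direct, if lengthy, combination of Theorem~\ref{orgthm}(2) and Theorem~\ref{Hthm}(2).
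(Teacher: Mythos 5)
Your proposal is correct and follows essentially the same route as the paper: the inclusion $J_n(k+1)\subseteq\check{Br}_n^\lambda$ comes from expanding $g_\pi$ in the Murphy basis of $H_{2l+1,n}$ and noting that every $\mu\vdash n-2l$ with $l>k$ satisfies $\mu\rhd\lambda$, and the ideal property comes from combining Corollary \ref{lm3} for the Brauer part with the straightening rule \eqref{ct6} for the Hecke part, absorbing the error terms via $\check{\mathscr{H}}^\mu_{2l+1,n}$ (partitions $\nu\rhd\mu\rhd\lambda$) and $J_n(l+1)\subseteq J_n(k+1)$. Your explicit remark that the left-ideal property follows from $*$-stability of the spanning set is a small point the paper leaves implicit, but otherwise the arguments coincide.
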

\begin{proof}
By \eqref{J_{k+1}}, every basis element in $J_n(k+1)$ is of the form $g^{*}_u e_{(l)}g_\pi g_v$ where $u, v\in~B_{l, n} $ and $\pi \in S_{2l+1, n}$, $k+1 \leq l \leq [n/2]$. Using Theorem \ref{Hthm}, the element $g_\pi$ can be rewritten $g_\pi~=\sum_{\mathfrak{s}, \mathfrak{t} \in Std(\mu)} g_{d(\mathfrak{s})}^*c_\mu g_{d(\mathfrak{t})}$ with $(l, \mu) \in \Lambda_n$. Since $n-2l < n-2k$, the definition of dominance order in Section \ref{com} implies that $\mu \rhd \lambda$. Thus, Formula \eqref{ct4} and the set \eqref{basisofBr} yield
$$g^{*}_u e_{(l)}g_\pi g_v= g^{*}_u e_{(l)} \big{(}\sum_{\mathfrak{s}, \mathfrak{t} \in Std(\mu)}g_{d(\mathfrak{s})}^*c_\mu g_{d(\mathfrak{t})}\big{)}g_v
 =\sum_{\mathfrak{s}, \mathfrak{t} \in Std(\mu)} g^{*}_u g_{d(\mathfrak{s})}^*m_\mu g_{d(\mathfrak{t})}g_v=\sum_{\mathfrak{s}, \mathfrak{t} \in Std(\mu)}x^\mu_{(\mathfrak{s}, u)(\mathfrak{t}, v)} \in \check{Br}_n^\lambda,$$ 
 that is, $J_n(k+1)\subseteq\check{Br}_n^\lambda$.

To prove the second statement, it is sufficient to show that $x^\mu_{(\mathfrak{s}, u)(\mathfrak{t}, v)}\cdot b \in\check{Br}_n^\lambda$, where $x^\mu_{(\mathfrak{s}, u)(\mathfrak{t}, v)} \in \check{Br}_n^\lambda$ (with $\mu \rhd \lambda$) 
and $b$ is a basis element of the $q$-Brauer algebra $Br_n(r^2, q^2)$. By Corollary \ref{lm3} we obtain
\begin{align*} (e_{(l)}g_v)b \equiv \sum_{\substack{\pi_1\in S_{2l+1, n}\\ v_1 \in B_{l, n}}} a_{(\pi_1, v_1)}e_{(l)}g_{\pi_1}g_{v_1} \mod{J_n(l+1)}.
\end{align*}
Notice that in Formula \eqref{ct7'} of Theorem \ref{Hthm}, $c_{\mathfrak{\boldsymbol{1}}\mathfrak{t}}= \boldsymbol{1}\cdot c_\mu g_{d(\mathfrak{t})}$ where $\boldsymbol{1}$ is the identity of the Hecke algebra.
We have the following calculation:
\begin{align*} 
x^\mu_{(\mathfrak{s}, u)(\mathfrak{t}, v)}\cdot b&= (g^{*}_ug_{d(\mathfrak{s})}^*m_\mu g_{d(\mathfrak{t})}g_v)\cdot b 
= (g^{*}_ug_{d(\mathfrak{s})}^*c_\mu g_{d(\mathfrak{t})})(e_{(l)}g_vb)\\
&\overset{Cor\ref{lm3}}{=}  \big{(}g^{*}_ug_{d(\mathfrak{s})}^*c_\mu g_{d(\mathfrak{t})}\big{)}\big{(}\sum_{\substack{\pi_1\in S_{2l+1, n}\\ v_1 \in B_{l, n}}} a_{(\pi_1, v_1)}e_{(l)}g_{\pi_1}g_{v_1} + J_n(l+1)\big{)}\\
&= \sum_{\substack{\pi_1\in S_{2l+1, n}\\ v_1 \in B_{l, n}}} a_{(\pi_1, v_1)} g^{*}_ug_{d(\mathfrak{s})}^*e_{(l)}(\boldsymbol{1}c_\mu g_{d(\mathfrak{t})}g_{\pi_1})g_{v_1} + J_n(l+1)\\
\end{align*}
\begin{align*}
&\overset{\eqref{ct7'}}{=} \sum_{\substack{\pi_1\in S_{2l+1, n}\\ v_1 \in B_{l, n}}} a_{(\pi_1, v_1)} g^{*}_ug_{d(\mathfrak{s})}^*e_{(l)}(c_{\mathfrak{\boldsymbol{1}}\mathfrak{t}}g_{\pi_1})g_{v_1} + J_n(l+1)\\
&\overset{\eqref{ct6}}{=} \sum_{\substack{\pi_1\in S_{2l+1, n}\\ v_1 \in B_{l, n}}} a_{(\pi_1, v_1)} g^{*}_ug_{d(\mathfrak{s})}^*e_{(l)}(\sum_{\mathfrak{t}_1 \in Std(\mu)} a_{\mathfrak{t}_1} c_{\mathfrak{\boldsymbol{1}}\mathfrak{t}_1} + \check{\mathscr{H}}^\mu_{2l+1, n})g_{v_1} + J_n(l+1)\\
&\overset{\eqref{ct7'}}{=} \sum_{\substack{\pi_1\in S_{2l+1, n}\\ v_1 \in B_{l, n}}} a_{(\pi_1, v_1)} g^{*}_ug_{d(\mathfrak{s})}^*e_{(l)}(\sum_{\mathfrak{t}_1 \in Std(\mu)} a_{\mathfrak{t}_1} c_\mu g_{d(\mathfrak{t}_1)} + \check{\mathscr{H}}^\mu_{2l+1, n})g_{v_1} + J_n(l+1)\\
&\overset{}{=} \sum_{\substack{\pi_1\in S_{2l+1, n}\\ v_1 \in B_{l, n}}}\sum_{\mathfrak{t}_1 \in Std(\mu)} a_{\mathfrak{t}_1} a_{(\pi_1, v_1)} (g^{*}_ug_{d(\mathfrak{s})}^*e_{(l)} c_\mu g_{d(\mathfrak{t}_1)}g_{v_1})\\ 
&+ \sum_{\substack{\pi_1\in S_{2l+1, n}\\ v_1 \in B_{l, n}}} a_{(\pi_1, v_1)} g^{*}_u e_{(l)} (g_{d(\mathfrak{s})}^* \check{\mathscr{H}}^\mu_{2l+1, n}) g_{v_1} + J_n(l+1)\\
&\overset{\eqref{ct4}, \eqref{idealh}}{=} \sum_{\substack{\pi_1\in S_{2l+1, n}\\ v_1 \in B_{l, n}}}\sum_{\mathfrak{t}_1 \in Std(\mu)} a_{\mathfrak{t}_1} a_{(\pi_1, v_1)} (g^{*}_ug_{d(\mathfrak{s})}^*m_\mu g_{d(\mathfrak{t}_1)}g_{v_1})\\ 
&+ \sum_{\substack{\pi_1\in S_{2l+1, n}\\ v_1 \in B_{l, n}}} a_{(\pi_1, v_1)}g^{*}_ue_{(l)}\big{(} \sum_{\substack{\mu_2 \vdash n-2l,\ \mu_2 \rhd \mu \\ \mathfrak{s}_2, \mathfrak{t}_2 \in Std(\mu_2)}}a_{(\mathfrak{s}_2, \mathfrak{t}_2)} g_{d(\mathfrak{s}_2)}^* c_{\mu_2}  g_{d(\mathfrak{t}_2)}\big{)}  g_{v_1} + J_n(l+1)\\
&\overset{\eqref{ct4}}{=} \sum_{\substack{\pi_1\in S_{2l+1, n}\\ v_1 \in B_{l, n}}}\sum_{\mathfrak{t}_1 \in Std(\mu)} a_{\mathfrak{t}_1} a_{(\pi_1, v_1)} (g^{*}_ug_{d(\mathfrak{s})}^*m_\mu g_{d(\mathfrak{t}_1)}g_{v_1})\\ 
&+ \sum_{\substack{\pi_1\in S_{2l+1, n}\\ v_1 \in B_{l, n}}}\sum_{\substack{\mu_2 \vdash n-2l,\ \mu_2 \rhd \mu \\ \mathfrak{s}_2, \mathfrak{t}_2 \in Std(\mu_2)}} a_{(\pi_1, v_1)} a_{(\mathfrak{s}_2, \mathfrak{t}_2)} (g^{*}_ug_{d(\mathfrak{s}_2)}^* m_{\mu_2}  g_{d(\mathfrak{t}_2)}  g_{v_1}) + J_n(l+1)\\
&\overset{}{=} \sum_{\substack{\pi_1\in S_{2l+1, n}\\ v_1 \in B_{l, n}}}\sum_{\mathfrak{t}_1 \in Std(\mu)} a_{\mathfrak{t}_1} a_{(\pi_1, v_1)} (x^\mu_{(\mathfrak{s}, u)(\mathfrak{t}_1, v_1)})\\ 
&+ \sum_{\substack{\pi_1\in S_{2l+1, n}\\ v_1 \in B_{l, n}}}\sum_{\substack{\mu_2 \vdash n-2l,\ \mu_2 \rhd \mu \\ \mathfrak{s}_2, \mathfrak{t}_2 \in Std(\mu_2)}} a_{(\pi_1, v_1)} a_{(\mathfrak{s}_2, \mathfrak{t}_2)} (x^{\mu_2}_{(\mathfrak{s}_2, u)(\mathfrak{t}_2, v_1)}) + J_n(l+1),
\end{align*}
where $a_{(\pi_1, v_1)}, a_{\mathfrak{t}_1}$ and $a_{\mathfrak{s}_2, \mathfrak{t}_2}$ are in $R$, and $\check{\mathscr{H}}^\mu_{2l+1, n}$ is the ideal of $H_{2l+1, n}$ defined in \eqref{idealh}.
Now by the first statement of Lemma \ref{bsofBr}, the assumption $\mu \rhd \lambda$ (hence $l \geq k$) and Definition \eqref{basisofBr}, all elements occuring in the last formula are in $\check{Br}_n^\lambda$, and hence, so is $x^\mu_{(\mathfrak{s}, u)(\mathfrak{t}, v)}\cdot b$.  
\end{proof}

The main result of this section is the following.
\begin{theorem}\label{Mthm}
The $q$-Brauer algebra $Br_n(r^2,q^2)$ (resp. $Br_n(N)$) is freely generated as an $R$-module by
the collection
\begin{align}\label{ct5}
\left\{x^\lambda_{(\mathfrak{s}, u)(\mathfrak{t}, v)} = g^{*}_ug_{d(\mathfrak{s})}^*m_\lambda
g_{d(\mathfrak{t})}g_v\,\bigg|\,
\begin{matrix}
\text {$(\mathfrak{s},u),(\mathfrak{t},v)\in \mathcal{I}_{n}(k,\lambda)$, for $(k, \lambda) \in \Lambda_n$}
\end{matrix}
\right\}.
\end{align}
Moreover, the following statements hold.
\begin{enumerate}
\item The involution $*$ sends
$x^\lambda_{(\mathfrak{s}, u)(\mathfrak{t}, v)}\text{ to }
(x^\lambda_{(\mathfrak{s}, u)(\mathfrak{t}, v)})^*=x^\lambda_{(\mathfrak{t}, v)(\mathfrak{s}, u)}$
for all $(\mathfrak{t},v),(\mathfrak{s},u)~\in~\mathcal{I}_n(k,\lambda)$.
\item  Suppose that $b\in Br_n(r^2,q^2)$, $(k, \lambda) \in \Lambda_n$
and $(\mathfrak{s},u), (\mathfrak{t},v)\in\mathcal{I}_{n}(k,\lambda)$. Then, there exist
$a_{(\mathfrak{t}_2,v_2)}\in R$, for $(\mathfrak{t}_2,v_2)\in\mathcal{I}_{n}(k,\lambda)$, such that
\begin{align}\label{ct10}
x^\lambda_{(\mathfrak{s}, u)(\mathfrak{t}, v)}\cdot b \equiv
\sum_{(\mathfrak{t}_2,v_2)\in\mathcal{I}_{n}(k,\lambda)}a_{(\mathfrak{t}_2,v_2)}
x^\lambda_{(\mathfrak{s}, u)(\mathfrak{t}_2, v_2)}\ \ \mod
\check{Br}^\lambda_n.
\end{align}
\end{enumerate}
\end{theorem}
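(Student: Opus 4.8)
The plan is to deduce Theorem~\ref{Mthm} from the already-established basis of Theorem~\ref{orgthm} together with the Murphy-basis machinery of Theorem~\ref{Hthm}, organizing the argument exactly as one does when lifting a cellular basis from a subalgebra. First I would show that the collection \eqref{ct5} spans $Br_n(r^2,q^2)$ over $R$. Start from a basis element $g^{*}_u e_{(k)} g_\pi g_v$ of \eqref{rm} with $u,v\in B_{k,n}$ and $\pi\in S_{2k+1,n}$. Using the isomorphism $H_{2k+1,n}\cong H_{n-2k}(q^2)$ and Murphy's Theorem~\ref{Hthm}, expand $g_\pi=\sum_{\mathfrak{s},\mathfrak{t}} r_{\mathfrak{s}\mathfrak{t}}\, g^{*}_{d(\mathfrak{s})}c_\mu g_{d(\mathfrak{t})}$ as an $R$-linear combination of Murphy basis elements $c_{\mathfrak{s}\mathfrak{t}}$ indexed by standard $\mu$-tableaux for partitions $\mu$ of $n-2k$. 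Since $e_{(k)}$ commutes with all of $H_{2k+1,n}$ (Remark~\ref{rm1}(5)), we get $e_{(k)}g_\pi=\sum r_{\mathfrak{s}\mathfrak{t}}\, g^{*}_{d(\mathfrak{s})} m_\mu g_{d(\mathfrak{t})}$, and hence $g^{*}_u e_{(k)} g_\pi g_v = \sum r_{\mathfrak{s}\mathfrak{t}}\, x^\mu_{(\mathfrak{s},u)(\mathfrak{t},v)}$, which lies in the span of \eqref{ct5}. Running over all $k$ shows \eqref{ct5} spans.

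For linear independence, I would invoke a counting argument: the number of elements in \eqref{ct5} is $\sum_{k=0}^{[n/2]} |B_{k,n}|^2 \cdot \big(\sum_{\lambda\vdash n-2k} |\STD(\lambda)|^2\big) = \sum_{k=0}^{[n/2]} |B_{k,n}|^2 \cdot (n-2k)!$, since $\sum_{\lambda\vdash m}|\STD(\lambda)|^2 = m!$. This is exactly the cardinality of the basis \eqref{rm} of Theorem~\ref{orgthm} (because $|S_{2k+1,n}| = (n-2k)!$), so a spanning set of the correct size over an algebra that is free of that rank as an $R$-module must be a basis; one can make this precise by base-changing to the fraction field if $R$ is a domain, or more carefully by noting that the transition matrix between \eqref{rm} and \eqref{ct5} is, within each fixed $k$ and after grouping by dominance, block upper-triangular with the invertible Murphy transition matrices on the diagonal blocks. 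Part~(1), the behaviour of $*$, is then immediate from Theorem~\ref{orgthm}(1) and Theorem~\ref{Hthm}(1): applying $*$ to $x^\lambda_{(\mathfrak{s},u)(\mathfrak{t},v)} = g^{*}_u g^{*}_{d(\mathfrak{s})} m_\lambda g_{d(\mathfrak{t})} g_v$ reverses the product, sends $g_v\mapsto g^{*}_v$, $g_{d(\mathfrak{t})}\mapsto g^{*}_{d(\mathfrak{t})}$, fixes $m_\lambda = e_{(k)}c_\lambda$ since $e^*=e$ and $c_\lambda^* = c_\lambda$, and yields $x^\lambda_{(\mathfrak{t},v)(\mathfrak{s},u)}$.

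For part~(2), the straightening formula \eqref{ct10}, I would reduce to the case where $b$ is one of the algebra generators $g_i$ or $e$, and then argue as in the proof of Lemma~\ref{bsofBr}. Write $x^\lambda_{(\mathfrak{s},u)(\mathfrak{t},v)}\cdot b = \big(g^{*}_u g^{*}_{d(\mathfrak{s})} c_\lambda g_{d(\mathfrak{t})}\big)\big(e_{(k)} g_v b\big)$. By Corollary~\ref{lm3}, $e_{(k)} g_v b \equiv \sum_{\pi_1, v_1} a_{(\pi_1,v_1)} e_{(k)} g_{\pi_1} g_{v_1} \bmod J_n(k+1)$ with $\pi_1\in S_{2k+1,n}$, $v_1\in B_{k,n}$. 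Since $J_n(k+1)\subseteq \check{Br}^\lambda_n$ by Lemma~\ref{bsofBr}, the $J_n(k+1)$-tail is absorbed mod $\check{Br}^\lambda_n$. For the main term, bring $e_{(k)}$ left past $c_\lambda g_{d(\mathfrak{t})}$ (Remark~\ref{rm1}(5)) so that it is adjacent to $c_\lambda$, forming $m_\lambda$, and we are left with $\sum a_{(\pi_1,v_1)}\, g^{*}_u g^{*}_{d(\mathfrak{s})}\, e_{(k)}\,\big(c_\lambda g_{d(\mathfrak{t})} g_{\pi_1}\big)\, g_{v_1}$. Now apply Murphy's Theorem~\ref{Hthm}(2): writing $c_\lambda g_{d(\mathfrak{t})} g_{\pi_1} = c_{\mathbf{1}\mathfrak{t}} g_{\pi_1} \equiv \sum_{\mathfrak{t}_2\in\STD(\lambda)} a_{\mathfrak{t}_2} c_{\mathbf{1}\mathfrak{t}_2} = \sum a_{\mathfrak{t}_2} c_\lambda g_{d(\mathfrak{t}_2)} \bmod \check{\mathscr{H}}^\lambda_{2k+1,n}$. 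The $\check{\mathscr{H}}^\lambda_{2k+1,n}$-error, once multiplied out and recombined via Theorem~\ref{Hthm} into Murphy basis elements $c_{\mathfrak{s}_2\mathfrak{t}_2}$ for $\mu_2\rhd\lambda$, produces terms $g^{*}_u g^{*}_{d(\mathfrak{s}_2)} m_{\mu_2} g_{d(\mathfrak{t}_2)} g_{v_1}$ with $\mu_2\rhd\lambda$, which by definition \eqref{basisofBr} lie in $\check{Br}^\lambda_n$. What survives mod $\check{Br}^\lambda_n$ is exactly $\sum_{\mathfrak{t}_2,v_1}\big(\sum_{\pi_1} a_{(\pi_1,v_1)} a_{\mathfrak{t}_2}\big)\, g^{*}_u g^{*}_{d(\mathfrak{s})} m_\lambda g_{d(\mathfrak{t}_2)} g_{v_1} = \sum_{(\mathfrak{t}_2,v_1)\in\mathcal{I}_n(k,\lambda)} a_{(\mathfrak{t}_2,v_1)} x^\lambda_{(\mathfrak{s},u)(\mathfrak{t}_2,v_1)}$, which is \eqref{ct10}. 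The main obstacle is bookkeeping: tracking all three layers of ``error terms'' — the $J_n(k+1)$-tail from Corollary~\ref{lm3}, the $\check{\mathscr{H}}^\lambda$-tail from Murphy straightening, and the further dominance-increasing terms these generate — and verifying at each stage that they land inside $\check{Br}^\lambda_n$; this is precisely the calculation already carried out in the proof of Lemma~\ref{bsofBr}, so the cleanest route is to cite that computation rather than repeat it. The one genuinely new point needing care is that the surviving tableau-indices $\mathfrak{t}_2$ and the surviving $v_1$ together range over all of $\mathcal{I}_n(k,\lambda) = \STD(\lambda)\times B_{k,n}$, with no mixing into larger $k$, which follows because Corollary~\ref{lm3} keeps $v_1\in B_{k,n}$ at the same level $k$ modulo $J_n(k+1)$.
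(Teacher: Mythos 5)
Your proposal is correct, and for the spanning/independence statement and for part (1) it coincides with the paper's argument: expand $g_\pi$ in the Murphy basis, commute $e_{(k)}$ past $H_{2k+1,n}$, and deduce independence from the invertibility of the Murphy transition matrix together with Theorem~\ref{orgthm} (the paper states this last step in one line; your counting/triangularity remark is a harmless elaboration). For part (2), however, you take a genuinely shorter route. The paper first reduces $b$ to a single basis element $x^\mu_{(\mathfrak{s}_1,u_1)(\mathfrak{t}_1,v_1)}$ and then splits into two cases according to whether $\mu\rhd\lambda$ or $\lambda\unrhd\mu$, controlling the product $e_{(k)}g_vg^*_{u_1}e_{(l)}$ via Lemma~3.4 of Wenzl and Lemma~\ref{tc1}(9), and invoking Lemma~\ref{lm4} to convert level-$l$ data back to level $k$ before the final Murphy straightening. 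You instead apply Corollary~\ref{lm3} directly to $e_{(k)}g_vb$ for arbitrary $b$, absorb the $J_n(k+1)$-tail into $\check{Br}^\lambda_n$ via Lemma~\ref{bsofBr}, and pass immediately to the Murphy straightening; as you note, this is exactly the computation already carried out in the proof of Lemma~\ref{bsofBr}, and it avoids the dominance case analysis entirely. Two small points to tidy: Corollary~\ref{lm3} is stated only for $k>0$, so for $k=0$ you should quote Theorem~\ref{orgthm}(2) instead (where $e_{(0)}=\boldsymbol{1}$); and when you claim the $\check{\mathscr{H}}^\lambda_{2k+1,n}$-error lands in $\check{Br}^\lambda_n$, you should observe that $\check{\mathscr{H}}^\lambda_{2k+1,n}$ is a two-sided ideal of $H_{2k+1,n}$, so the extra factor $g^*_{d(\mathfrak{s})}$ is absorbed before rewriting the result in the form \eqref{basisofBr} --- the paper glosses over the same point. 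Both routes reach the same conclusion; yours is more economical, while the paper's case analysis makes the triangularity with respect to the dominance order on $\Lambda_n$ more visible.
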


\begin{proof}
Let $b$ be an arbitrary element in $Br_n(r^2, q^2)$. Then by Theorem~\ref{orgthm}, $b$ can be expressed as an $R$-linear combination
\begin{align}\label{ct2}
b=\sum_{j}a_{j} g^*_{u_j}g_{\pi_j}e_{(k_j)}g_{v_j},
\end{align}
where $(k_j, \lambda) \in \Lambda_n$, $a_j \in R$, $\pi_j \in S_{2k_j+1, n}$, and $\ u_j,\ v_j \in B_{k_j, n}$. Using Theorem~\ref{Hthm} the element $g_{\pi_j}$ has an expression
\begin{align*}
g_{\pi_j}= \sum_p a_{j_p}g^{*}_{d(\mathfrak{s}_{j_p})} c_{\lambda_j} g_{d(\mathfrak{t}_{j_p})}
\end{align*}
with some $\mathfrak{s}_{j_p},\ \mathfrak{t}_{j_p} \in Std(\lambda_j)$, $a_{j_p} \in R$, and $p \in \mathbb{N}$.
Replace $g_{\pi_j}$ in \eqref{ct2} by the last equation,
\begin{align*}
b&=\sum_{j}a_{j} g^*_{u_j} \big{(}\sum_p a_{j_p}g^{*}_{d(\mathfrak{s}_{j_p})} c_{\lambda_j} g_{d(\mathfrak{t}_{j_p})}\big{)}e_{(k_j)}g_{v_j}
=\sum_{j}\sum_p a_{j} a_{j_p} g^*_{u_j} g^{*}_{d(\mathfrak{s}_{j_p})}  e_{(k_j)} c_{\lambda_j} g_{d(\mathfrak{t}_{j_p})}g_{v_j}\\
&\overset{\eqref{ct4}}=\sum_{j}\sum_p a_{j} a_{j_p} g^*_{u_j} g^{*}_{d(\mathfrak{s}_{j_p})} m_{\lambda_j} g_{d(\mathfrak{t}_{j_p})}g_{v_j} =\sum_{j}\sum_p a_{j} a_{j_p}  x^{\lambda_j}_{(\mathfrak{s}_{j_p}, u_j)(\mathfrak{t}_{j_p}, v_j)},
\end{align*}
where $(\mathfrak{s}_{j_p}, u_j), (\mathfrak{t}_{j_p}, v_j) \in \mathcal{I}_{n}(k_j,\lambda_j)$.

Therefore, the set \eqref{ct5} linearly spans $Br_n(r^2, q^2)$. The independence of~\eqref{ct5} follows from the linear independences of \eqref{ct7'} and \eqref{rm} in Theorems \ref{Hthm} and \ref{orgthm}, respectively.

The statement (1) is obtained by combining the involution * on the Hecke algebra $H_n$ and Lemma \ref{tc1}(10) in which $(e_{(k)})^* =e_{(k)}$.

The statement (2) is shown as follows: Let $(k, \lambda), (l, \mu) \in \Lambda_n$.
Applying the analysis of the basis element $b$ above, it suffices to consider the product $x^\lambda_{(\mathfrak{s}, u)(\mathfrak{t}, v)}\cdot b'$, where $(\mathfrak{s}, u)$ and $ (\mathfrak{t}, v)$ are in $\mathcal{I}_{n}(k,\lambda)$ and $b' :=  x^\mu_{(\mathfrak{s}_1, u_1)(\mathfrak{t}_1, v_1)}$ with $(\mathfrak{s_1}, u_1),\ (\mathfrak{t_1}, v_1) \in \mathcal{I}_{n}(l,\mu)$ is a summand of $b$. Subsequently, we consider two cases with respect to partitions $\mu$ and $\lambda$.

The first case is $\mu \vartriangleright \lambda$: Then the definition of the dominance order implies that $k \leq l$. So, by Lemma 3.4~\cite{W2}, we get
$$ e_{(k)}g_v g^*_{u_1} e_{(l)} \in H_{2k+1, n} e_{(l)} + \sum_{m \geq l+1}H_n e_{(m)} H_n.$$
Hence,
\begin{align*}
( c_\lambda g_{d(\mathfrak{t})}) &(e_{(k)}g_v g^*_{u_1} e_{(l)}) (g^*_{d(\mathfrak{s_1})}  c_\mu) \in  c_\lambda g_{d(\mathfrak{t})}H_{2k+1, n} e_{(l)} g^*_{d(\mathfrak{s_1})} c_\mu  + \sum_{m \geq l+1}H_n e_{(m)} H_n\\
&\overset{\eqref{ct4}}\subseteq H_{2k+1, n}g^*_{d(\mathfrak{s_1})} m_\mu  + \sum_{m \geq l+1}H_n e_{(m)} H_n
\overset{k \leq l}\subseteq H_{2k+1, n}g^*_{d(\mathfrak{s_1})} m_\mu  + \sum_{m \geq k+1}H_n e_{(m)} H_n\\
&\overset{\eqref{ct9}}\subseteq H_{2k+1, n}g^*_{d(\mathfrak{s_1})} m_\mu + J_n(k+1) \overset{L\ref{bsofBr}}{\subseteq} \check{Br}^\lambda_n.
\end{align*}
Since $( c_\lambda g_{d(\mathfrak{t})}) (e_{(k)}g_v g^*_{u_1} e_{(l)}) (g^*_{d(\mathfrak{s_1})}  c_\mu)
\overset{\eqref{ct4}}=( m_\lambda g_{d(\mathfrak{t})} g_v) (g^*_{u_1}g^*_{d(\mathfrak{s_1})} m_\mu)$, the preceding calculation yields
\begin{align*}
(x^\lambda_{(\mathfrak{s}, u)(\mathfrak{t}, v)})\cdot(x^\mu_{(\mathfrak{s}_1, u_1)(\mathfrak{t}_1, v_1)})=(g^{*}_ug_{d(\mathfrak{s})}^*m_\lambda g_{d(\mathfrak{t})}g_v)(g^{*}_{u_1}g_{d(\mathfrak{s_1})}^*m_\mu g_{d(\mathfrak{t_1})}g_{v_1}) \in \ g^{*}_ug_{d(\mathfrak{s})}^*\cdot\check{Br}^\lambda_n\cdot g_{d(\mathfrak{t_1})}g_{v_1} \subseteq \check{Br}^\lambda_n.
\end{align*}
Thus, $x^\lambda_{(\mathfrak{s}, u)(\mathfrak{t}, v)})(x^\mu_{(\mathfrak{s}_1, u_1)(\mathfrak{t}_1, v_1)}) \equiv 0 \ (mod \ \ \check{Br}^\lambda_n),$ namely,
$x^\lambda_{(\mathfrak{s}, u)(\mathfrak{t}, v)}\cdot b' \equiv 0 \ (mod \ \ \check{Br}^\lambda_n).$

The second case is $\lambda \trianglerighteq \mu$, that is $l \leq k$: Using Lemma~\ref{tc1}(9), it yields
$$ e_{(k)}g_v g^*_{u_1} e_{(l)} \in e_{(k)}H_{2l+1, n} + \sum_{m \geq k+1}H_n e_{(m)} H_n.$$
Applying Lemma~\ref{tc1}(9) and the same arguments as in the previous case implies that
\begin{align*}
( c_\lambda g_{d(\mathfrak{t})}) &(e_{(k)}g_v g^*_{u_1} e_{(l)}) (g^*_{d(\mathfrak{s_1})}  c_\mu)
=(m_\lambda g_{d(\mathfrak{t})} g_v) (g^*_{u_1}g^*_{d(\mathfrak{s_1})} m_\mu)\\
&\in  c_\lambda g_{d(\mathfrak{t})} e_{(k)} H_{2l+1, n} g^*_{d(\mathfrak{s_1})} c_\mu  + \sum_{m \geq k+1}H_n e_{(m)} H_n\\
&\overset{\eqref{ct9}, \eqref{ct4}}\subseteq  m_\lambda H_{2l+1, n} + J_n(k+1)
\subseteq m_\lambda H_{2l+1, n} + \check{Br}^\lambda_n \ \ \text{(by Lemma \ref{bsofBr})}.
\end{align*}
Hence, in this case
\begin{align*}
x^\lambda_{(\mathfrak{s}, u)(\mathfrak{t}, v)}\cdot b' &= g^{*}_ug_{d(\mathfrak{s})}^*(m_\lambda g_{d(\mathfrak{t})}g_vg^{*}_{u_1}g_{d(\mathfrak{s_1})}^*m_\mu) g_{d(\mathfrak{t_1})}g_{v_1}\\
&\in g^{*}_ug_{d(\mathfrak{s})}^*m_\lambda H_{2l+1, n}g_{d(\mathfrak{t_1})}g_{v_1} + \check{Br}^\lambda_n
\subseteq g^{*}_ug_{d(\mathfrak{s})}^*m_\lambda H_{2l+1, n}g_{v_1} + \check{Br}^\lambda_n.
\end{align*}
The last formula implies that $x^\lambda_{(\mathfrak{s}, u)(\mathfrak{t}, v)}\cdot b'$
can be rewritten as an $R$-linear combination
\begin{align*}
x^\lambda_{(\mathfrak{s}, u)(\mathfrak{t}, v)}\cdot b' &= g^{*}_ug_{d(\mathfrak{s})}^*m_\lambda \big{(} \sum_{  \pi_1 \in S_{2l+1, n}} a_{ \pi_1} g_{ \pi_1} \big{)} g_{v_1} + \check{Br}^\lambda_n\\
&\overset{ \eqref{ct4}}= g^{*}_ug_{d(\mathfrak{s})}^*c_\lambda \big{(} \sum_{  \pi_1 \in S_{2l+1, n}} a_{ \pi_1} e_{(k)} g_{ \pi_1}g_{v_1} \big{)} + \check{Br}^\lambda_n\\
&\overset{L\ref{lm4}}= g^{*}_ug_{d(\mathfrak{s})}^*c_\lambda \big{(} \sum_{  \pi_1 \in S_{2l+1, n}}a_{ \pi_1} ( \sum_{ \substack{ \omega_{\pi_1} \in S_{2k+1, n}\\ v_{\pi_1}\in B_{k,n}}}  a_{( \omega_{\pi_1}, v_{\pi_1})} e_{(k)} g_{ \omega_{\pi_1}} g_{v_{\pi_1}}) \big{)}  + \check{Br}^\lambda_n\\
&=  \sum_{  \pi_1 \in S_{2l+1, n}}a_{ \pi_1}( \sum_{ \substack{ \omega_{\pi_1} \in S_{2k+1, n}\\ v_{\pi_1}\in B_{k,n}}}  a_{( \omega_{\pi_1}, v_{\pi_1})} g^{*}_ug_{d(\mathfrak{s})}^* e_{(k)} (c_\lambda g_{ \omega_{\pi_1}}) g_{v_{\pi_1}})  + \check{Br}^\lambda_n\\
&\overset{ \eqref{ct6}}= \sum_{  \pi_1 \in S_{2l+1, n}} a_{ \pi_1} \big{(} \sum_{ \substack{ \omega_{\pi_1} \in S_{2k+1, n}\\ v_{\pi_1}\in B_{k,n}}}  a_{( \omega_{\pi_1}, v_{\pi_1})} g^{*}_ug_{d(\mathfrak{s})}^*e_{(k)} (\sum_{ \mathfrak{t}_{\omega_{\pi_1}} \in Std(\lambda)} a_{\mathfrak{t}_{\omega_{\pi_1}}}c_{\mathfrak{\boldsymbol{1}}\mathfrak{t}_{\omega_{\pi_1}}} + \check{\mathscr{H}}^\lambda_{2k+1, n}) g_{v_{\pi_1}} \big{)} + \check{Br}^\lambda_n\\
&\overset{}= \sum_{  \pi_1 \in S_{2l+1, n}} a_{ \pi_1} \big{(} \sum_{ \substack{ \omega_{\pi_1} \in S_{2k+1, n}\\ v_{\pi_1}\in B_{k,n}}}  a_{( \omega_{\pi_1}, v_{\pi_1})}  (\sum_{ \mathfrak{t}_{\omega_{\pi_1}} \in Std(\lambda)} a_{\mathfrak{t}_{\omega_{\pi_1}}}g^{*}_ug_{d(\mathfrak{s})}^*e_{(k)}c_{\mathfrak{\boldsymbol{1}}\mathfrak{t}_{\omega_{\pi_1}}}g_{v_{\pi_1}}) \big{)}\\
&+ \sum_{  \pi_1 \in S_{2l+1, n}} a_{ \pi_1} \big{(} \sum_{ \substack{ \omega_{\pi_1} \in S_{2k+1, n}\\ v_{\pi_1}\in B_{k,n}}}  a_{( \omega_{\pi_1}, v_{\pi_1})} g^{*}_ug_{d(\mathfrak{s})}^*e_{(k)} \check{\mathscr{H}}^\lambda_{2k+1, n} g_{v_{\pi_1}} \big{)}  + \check{Br}^\lambda_n\\
\end{align*}
\begin{align*}
&= \sum_{  \pi_1 \in S_{2l+1, n}} a_{ \pi_1} \big{(} \sum_{ \substack{ \omega_{\pi_1} \in S_{2k+1, n}\\ v_{\pi_1}\in B_{k,n}}}  a_{( \omega_{\pi_1}, v_{\pi_1})} (\sum_{ \mathfrak{t}_{\omega_{\pi_1}} \in Std(\lambda)} a_{\mathfrak{t}_{\omega_{\pi_1}}} x^\lambda_{(\mathfrak{s}, u)(\mathfrak{t}_{\omega_{\pi_1}}, v_{\pi_1})})\big{)}\\
&+ \sum_{  \pi_1 \in S_{2l+1, n}} a_{ \pi_1} \big{(} \sum_{ \substack{ \omega_{\pi_1} \in S_{2k+1, n}\\ v_{\pi_1}\in B_{k,n}}}  a_{( \omega_{\pi_1}, v_{\pi_1})} g^{*}_ug_{d(\mathfrak{s})}^*e_{(k)} \check{\mathscr{H}}^\lambda_{2k+1, n} g_{v_{\pi_1}} \big{)}  + \check{Br}^\lambda_n,
\end{align*}
where $a_{ \pi_1}$, $a_{( \omega_{\pi_1}, v_{\pi_1})}$, and $a_{\mathfrak{t}_{\omega_{\pi_1}}}$ are in $R$. By the definition of $\check{Br}^\lambda_n$ in \eqref{basisofBr}, it is obviously that the middle term in the last formula is in $\check{Br}^\lambda_n$. So, the last formula can be rearranged such that 
$$x^\lambda_{(\mathfrak{s}, u)(\mathfrak{t}, v)}\cdot b \equiv
\sum_{(\mathfrak{t}_2,v_2)\in\mathcal{I}_{n}(k,\lambda)}a_{(\mathfrak{t}_2,v_2)}
x^\lambda_{(\mathfrak{s}, u)(\mathfrak{t}_2, v_2)}\ \ \mod
\check{Br}^\lambda_n,$$ 
where $\mathfrak{t}_2:= \mathfrak{t}_{\omega_{\pi_1}} \in Std(\lambda)$, $v_2:= v_{\pi_1} \in B_{k,n}$, and $a_{(\mathfrak{t}_2,v_2)}$ is the corresponding coefficient of $x^\lambda_{(\mathfrak{s}, u)(\mathfrak{t}_2, v_2)}$.
Thus, we get the precise statement~\eqref{ct10}.
\end{proof}
As a consequence of the above theorem,  $\check{Br}_n^\lambda$ is the $R$-module freely generated by the collection \eqref{basisofBr}.

The new basis \eqref{ct5} of the $q$-Brauer algebra can be verified to be a cellular basis in the sense of Graham and Lehrer \cite{GL} by checking conditions of the definition of cellular algebra (see Definition 1.1 in \cite{GL}) as follows:

The $q$-Brauer algebra $Br_n(r^2, q^2)$ has the cell datum $(\Lambda_n, \mathcal{I}_{n}, C, *)$ where
\begin{enumerate}
\item[(C1)] \ \ $\Lambda_n$ is a partially ordered set with the dominance order defined in Section \ref{com}. For each $(k, \lambda) \in \Lambda_n$, $\mathcal{I}_{n}(k, \lambda)$ is a finite set satisfying that 
$$C: \coprod_{(k, \lambda)\in \Lambda_n} \mathcal{I}_{n}(k, \lambda) \times \mathcal{I}_{n}(k, \lambda) \rightarrow Br_n(r^2, q^2)$$ 
determined by the rule 
$C((\mathfrak{s},u),\ (\mathfrak{t},v)) = x^\lambda_{(\mathfrak{s}, u)(\mathfrak{t}, v)}$ is injective map.
\item[(C2)] This condition follows from Theorem \ref{Mthm}(1).
\item[(C3)] This condition is satisfied by Theorem \ref{Mthm}(2).
\item[(C3')] This condition is obtained by applying * to the equation \eqref{ct10}, we obtain 
\begin{align*}
b^*\cdot x^\lambda_{(\mathfrak{t}, v)(\mathfrak{s}, u)} \equiv
\sum_{(\mathfrak{t}_2,v_2)\in\mathcal{I}_{n}(k,\lambda)}a_{(\mathfrak{t}_2,v_2)}
x^\lambda_{(\mathfrak{t}_2, v_2)(\mathfrak{s}, u)}\ \ \mod
\check{Br}^\lambda_n.
\end{align*}
\end{enumerate}

Now we can apply the representation theory of cellular algebras for the $q$-Brauer algebra.
For $(k, \lambda) \in \Lambda_n$, the {\it{Cell}} module, says $C(k, \lambda)$, of the $q$-Brauer algebra is called {\it{Specht}} module and is defined to be the $R$--module freely generated by
\begin{align}\label{ct11}
\left \{x^\lambda_{(\mathfrak{t}, v)} :=m_\lambda g_{d(\mathfrak{t})}g_v 
+\check{Br}_n^\lambda\,|\,(\mathfrak{t},v)\in \mathcal{I}_{n}(k,\lambda)\right\}
\end{align}
and with the right $Br_n(r^2, q^2)$ action
\begin{align*}
x^\lambda_{(\mathfrak{t}, v)}\cdot b+\check{Br}_n^\lambda
= \sum_{(\mathfrak{t}_1,v_1)\in\mathcal{I}_{n}(k,\lambda)}a_{(\mathfrak{t}_1,v_1)} 
x^\lambda_{(\mathfrak{t}_1, v_1)} +\check{Br}_n^\lambda&&\text{for $b\in
Br_n(r^2,q^2)$,}
\end{align*}
where the coefficients $a_{(\mathfrak{t}_1,v_1)}\in R$, for
$(\mathfrak{t}_1,v_1)$ in $\mathcal{I}_{n}(k,\lambda)$, are determined by
the expression~\eqref{ct10}.


\begin{remark} \label{rm2}
1. The cellular basis of the $q$-Brauer algebra in Theorem \ref{Mthm} is a lift of the Murphy bases of the Hecke algebras $H_{2k+1, n}$ for $0 \le k \le [n/2]$. We do not know if this basis is the Murphy basis in the sense of Murphy \cite{M1, M2}. So far, A family of Jucys-Murphy elements of the $q$-Brauer algebra is unknown.

2. Note that the Specht module $C(k, \lambda)$ of the $q$-Brauer algebra in this paper is a lift of the Specht module $S^\lambda$ of the Hecke algebra of the symmetric group in \eqref{ct7}, due to Mathas \cite{Math}. In the case $q=1$, it recovers the Specht module of the classical Brauer algebra used in \cite{HP}.

3. Let $F$ be a field and
$\hat{r},\hat{q},(\hat{q}-\hat{q}^{-1})$ and $(\hat{r}-\hat{r}^{-1})$ be units in $F$. The
assignments $\varphi:r\mapsto \hat{r}$ and $\varphi:q\mapsto\hat{q}$
determine a homomorphism $R\to F$, giving $F$ an
$R$--module structure.  We refer to the specialization
$Br_n(\hat{r}^2, \hat{q}^2)=Br_n(r^2,q^2)\otimes_R \F$ as a $q$-Brauer algebra
over $F$. If $(k, \lambda) \in \Lambda_n$ then the cell module $C(k, \lambda)\otimes_RF$ for
$Br_n(\hat{r}^2, \hat{q}^2)$ admits a symmetric associative bilinear form
which is related to the generic form~\eqref{ct12} in an obvious
way. Similarly, this holds true for the version $Br_n(N)$.

4. Whenever the context is clear and no confusion can
arise, the abbreviation $Br_n(r^2, q^2)$ will be used for $Br_n(\hat{r}^2, \hat{q}^2)$ and similarly, $C(k, \lambda)$ will be used for the
$Br_n(\hat{r}^2, \hat{q}^2)$--module $C(k, \lambda)\otimes_R\F$.

5. In the case $q=1$ the version of Theorem~\ref{Mthm} for $Br_n(N)$ coincides with Enyang's result to the classical Brauer algebra $D_n(N)$ (see~\cite{En}, Theorem~9.1). It implies that  over a field $F$ of any characteristic the other results for the $q$-Brauer algebra $Br_n(N)$ in this article recover those of the classical Brauer algebra.
\end{remark}
The example below illustrates a basis for Specht module.

\begin{example}\label{ex2}
Let $n=5$, $k=1$, and $\lambda=(2,1)$. If $j, i_j$ are integers with
$1\le i_j \le j\le n-1$, write  $t_{j}=1 \text{ or } t_{j}= s_js_{j-1} \cdots  s_{i_j}$, so that\\
$B_{2, 5} = \{ v = t_2t_4 | \ t_j = 1 \text{ or } t_j = s_{j,i_j}, \ 1\le i_j \le j \text{ for } j \in\{1, 2,4\} \}$;
\begin{align*}
B_{1, 5} = \{v = t_2t_3t_4 | \ t_j &= 1 \text{ or } t_j = s_{j,i_j}, \ 1\le i_j \le j \le 4 \} \\
&= \{ \boldsymbol{1},\ s_2,\ s_{2,3},\ s_{2,1},\ s_{2,1}s_3,\ s_{2,1}s_{3,2},\ s_{2,4},\ s_{2,1}s_{3,4},\ s_{2,1}s_{3,2}s_4,\ s_{2,1}s_{3,2}s_{4, 3} \}.
\end{align*}
Since the set of partitions $\{ \mu \ | \mu \rhd \lambda \} = \{ \mu_1 = (3),\ \mu_2 = (1)\}$ we obtain as follows:\\
With $\mu_1 = (3)$ the Young subgroup  $S_{\mu_1} = \{\boldsymbol{1}, s_3, s_4, s_3s_4, s_4s_3, s_4s_3s_4 \}$ and the set of all standard tableau $Std(\mu_1)= \{ \ \mathfrak{t}^{\mu_1}=\text{\tiny\Yvcentermath1$\young(345)$}\ \}$. Hence
 $$m_{\mu_1} = e(\boldsymbol{1}+ g_3+ g_4+ g_3g_4+ g_4g_3+ g_3g_4g_3)
 =e(\boldsymbol{1}+ g_3)(\boldsymbol{1}+ g_4 + g_4g_3).$$
 With $\mu_2 = (1)$ the Young subgroup  $S_{\mu_2} = \{ \boldsymbol{1} \}$,
 $Std(\mu_2)= \{ \ \mathfrak{t}^{\mu_2}=\text{\tiny\Yvcentermath1$\young(5)$}\ \}$ and
 $m_{\mu_2} = e_{(2)}$. Now, by Equation \eqref{bsofBr} the two-sided ideal $\check{Br}^{(2,1)}_5$ has a basis:
 \begin{align} 
\bigg\{
\begin{matrix}
\text{$x^{\mu_1}_{(\mathfrak{s}_1, u_1)(\mathfrak{t}_1, v_1)}$,}\\
\text{$x^{\mu_2}_{(\mathfrak{s}_2, u_2)(\mathfrak{t}_2, v_2)}$}
\end{matrix}
\bigg|\ 
\begin{matrix}
\text{$(\mathfrak{t}_1,v_1),(\mathfrak{s}_1,u_1)\in\mathcal{I}_{n}(l, \mu_1)$,}\\
\text{$ (\mathfrak{t}_2,v_2),(\mathfrak{s}_2,u_2)\in\mathcal{I}_{n}(l, \mu_2)$}
\end{matrix}
\bigg\} =
\bigg\{
\begin{matrix}
\text{$x^{\mu_1}_{(\boldsymbol{1}, u_1)(\boldsymbol{1}, v_1)}$,}\\
\text{$x^{\mu_2}_{(\boldsymbol{1}, u_2)(\boldsymbol{1}, v_2)}$}
\end{matrix}
\bigg|\ 
\begin{matrix}
\text{$v_1, u_1\in B_{1, 5}$}\\
\text{$ v_2, u_2\in B_{2, 5}$}
\end{matrix}
\bigg\}.
\end{align}
In the other hand, we get
$\STD(\lambda)=\left\{\mathfrak{t}^\lambda=\text{\tiny\Yvcentermath1$\young(34,5)$}\,\,,
\mathfrak{t}^\lambda s_4=\text{\tiny\Yvcentermath1$\young(35,4)$}\,\,\right\}$, $S_{\lambda} = \{ \boldsymbol{1}, s_3\}$
and $m_\lambda=e(1+g_3)$.  The basis of the Specht module
$C(1, \lambda)$, of the form displayed in~\eqref{ct11}, is
\begin{align*}
\big\{\ e(1+g_3)g_v +\check{Br}^{(2,1)}_5,\ e(1+g_3)g_4g_v +\check{Br}^{(2,1)}_5 \ | v\in B_{1, 5}\ \big\}.
\end{align*}
\end{example}
As in Proposition 2.4 of~\cite{GL}, the Specht module
$C(k, \lambda)$ for $Br_n(r^2,q^2)$ admits an associative bilinear
form $\langle\,\,,\,\rangle_\lambda:C(k, \lambda) \times C(k, \lambda)\to R$
defined by
\begin{align}\label{ct12}
\langle x^{\lambda}_{(\mathfrak{t}, v)},x^{\lambda}_{(\mathfrak{s}, u)}  \rangle_\lambda m_\lambda\equiv x^{\lambda}_{(\mathfrak{t}, v)} (x^{\lambda}_{(\mathfrak{s}, u)})^* \mod \check{Br}^\lambda_n.
\end{align}
This means 
\begin{align*}
\langle m_{\lambda} g_{d(\mathfrak{t})}g_v+\check{Br}_n^\lambda, m_{\lambda} g_{d(\mathfrak{s})}g_u +\check{Br}_n^\lambda \rangle_\lambda m_\lambda 
&\equiv m_{\lambda} g_{d(\mathfrak{t})}g_v g^*_u g^*_{d(\mathfrak{t})}m_{\lambda}   \mod \check{Br}^\lambda_n.
\end{align*}

\begin{example}\label{bilinear:ex}
Let $n=3$, $k=1$ and $\lambda=(1)$. So that $\check{Br}_3^{(1)}=(0)$ and
$m_{(1)}=e$. We order the basis~\eqref{ct11} for the module
$C(1, (1))$ as $\mathbf{v}_1=e$, $\mathbf{v}_2=eg_2$ and
$\mathbf{v}_3=eg_2g_1$ and, with respect to this ordered basis,
the Gram matrix $\langle\mathbf{v}_i,\mathbf{v}_j \rangle_\lambda$ of the
bilinear form~\eqref{ct12} is (see \cite{N1} p.69 for a detail calculation)
\begin{align*}
\begin{bmatrix}
& a &rq &rq^{3} \\
& rq &q^{2}a+(q^{2}-1)rq &rq^{5} \\
& rq^{3} &rq^{5} &q^{4}a +(q^{4}-1)rq^{3}\\
\end{bmatrix}
,\text{ where \ $a=\dfrac{r-r^{-1}}{q-q^{-1}}$}.
\end{align*}
The determinant of the Gram matrix given above is 
\begin{align}\label{detGr3}
\dfrac {3q^{5}(r^{2} - q^{2})^{2}(q^{4}r^{2} -1)} {r^{3}(q^{2}-1)^{3}}
\end{align}
\end{example}

\section{Representation theory over a field}
Using the basis of Specht modules $C(l, \lambda)$ we have defined the new $F$-bilinear form, $\langle\ ,\ \rangle_\lambda$, for the $q$-Brauer algebra. This bilinear form differs from the one given in Definition 4.21\cite{N}. In detail, instead of determining the bilinear form via the known bilinear forms, including two bilinear forms of both the Hecke algebra and the iterated inflation's algebra, our bilinear form is directly defined using \eqref{ct11}. This enable us to give a classification of simple $Br_n(r^2, q^2)$-modules only by using explicit calculations in Theorem \ref{irremodule}.  Also notice that in a restriction to subalgebra $H_n$ the new bilinear form recovers the known one for the Hecke algebra.   

Using the general theory of cellular algebras we obtain some results about the \break representation theory of $q$-Brauer algebras. 
From now on, let $F$ be an arbitrary field of characteristic $p \geq 0$. Denote
$$rad(C(k, \lambda))=\{ x\in C(k, \lambda) |\ \langle x, y \rangle_\lambda=0 \text{ for all } y\in C(k, \lambda) \}$$ 
and $$D(k, \lambda)=C(k, \lambda)/rad(C(k, \lambda)).$$ 

The following are special cases of results in \cite{GL}.

{\bf{Statement 1.}} For $(k, \lambda) \in \Lambda_n$, $r,\ q$ and $(r - r^{-1})/(q-q^{-1})$ invertible elements in an arbitrary field $F$, let $Br_n(r^{2}, q^{2})$ be a $q$-Brauer algebra over $F$.  Then 
\begin{enumerate}
\item $rad(C(k, \lambda))$ is a $Br_n(r^{2}, q^{2})$-submodule of $C(k, \lambda)$;
\item If $D(k, \lambda) \neq 0$ then
\begin{enumerate}
	\item $D(k, \lambda)$ is simple;
	\item $rad(C(k, \lambda))$ is the radical of the $Br_n(r^{2}, q^{2})$-module $C(k, \lambda).$
\end{enumerate}
\end{enumerate}

{\bf{Statement 2.}} For $(k, \lambda), (l, \mu) \in \Lambda_n$, let $Br_n(r^{2}, q^{2})$ be a $q$-Brauer algebra over an \break arbitrary field $F$. Suppose $M$ is a $Br_n(r^{2}, q^{2})$-submodule of $C(k, \lambda)$ and\\ 
\begin{center}
$\varphi:~C(l, \mu)~\longrightarrow~C(k, \lambda)/M$
\end{center}
is a $Br_n(r^{2}, q^{2})$-module homomorphism, and $\langle ,  \rangle_\mu \neq 0$. Then
\begin{enumerate}
\item $\varphi \neq 0$ only if $\lambda \unrhd \mu$.
\item If $\lambda = \mu$, then there are elements $0 \neq r_0,\ r_1\in F$ such that for all $x \in C(l, \mu)$, we have $r_0\varphi(x) = r_1x +M$.
\end{enumerate}

For $(k, \lambda), (l, \mu) \in \Lambda_n$ and $D(l, \mu)\neq 0$, let $d_{\lambda \mu} = [C(k, \lambda)\ :\ D(l, \mu)]$ be the composition multiplicity of $D(l, \mu)$ in $C(k, \lambda)$.

The next statement provides a classification of the simple $Br_n(r^{2}, q^{2})$--modules. This result is an analogue of that for the Hecke algebra due to Dipper and James (see \cite{DJ1}, Theorem~7.6). 

\begin{theorem} \label{irremodule} For $(k, \lambda) \in \Lambda_n$, $r,\ q$ and $(r - r^{-1})/(q-q^{-1})$ invertible elements in an arbitrary field $F$, let $Br_n(r^{2}, q^{2})$ be a $q$-Brauer algebra over $F$. Then
\begin{enumerate}
\item The set
$\{ D(l, \mu) |\  (l, \mu)\in \Lambda_n \text{ and $\mu$ is an $e(q^{2})$-restricted partition} \}$
is a complete set of pairwise non-isomorphic simple $Br_n(r^{2}, q^{2})$-modules.
\item For $(k, \lambda), (l, \mu) \in \Lambda_n$, suppose that $\mu$ is an $e(q^2)$-restricted partition. Then $d_{\mu \mu}~=~1$ and $d_{\lambda \mu}  \neq 0$ only if $\lambda \unrhd\mu$.
\end{enumerate}
\end{theorem}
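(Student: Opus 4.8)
The plan is to run the standard cellular-algebra machinery of Graham--Lehrer, exactly as in the Hecke-algebra case of Dipper--James, but keeping track of the extra index $k$ (equivalently of the ideal filtration $J_n(k+1)$). The key point that has to be supplied by hand is the identification of \emph{which} cell modules $C(l,\mu)$ have non-zero bilinear form $\langle\ ,\ \rangle_\mu$, and the answer should be: precisely those for which $\mu$ is $e(q^2)$-restricted (and any $0\le l\le[n/2]$ with $\mu\vdash n-2l$). Granting this, part (1) is immediate from the general theory: by Statement 1, $D(l,\mu)\ne 0$ exactly when $\langle\ ,\ \rangle_\mu\ne 0$, each such $D(l,\mu)$ is simple, and by Statement 2(1) a non-zero homomorphism $C(l,\mu)\to C(k,\lambda)/M$ forces $\lambda\unrhd\mu$ in $\Lambda_n$; combined with Statement 2(2) this shows the $D(l,\mu)$ are pairwise non-isomorphic and that every simple module of a cellular algebra arises this way. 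Part (2) likewise follows: $d_{\mu\mu}=1$ because $C(l,\mu)$ has $D(l,\mu)$ as its head with multiplicity one (the top layer of its cell-module filtration), and $d_{\lambda\mu}\ne0$ forces $\lambda\unrhd\mu$ by the triangularity in Statement 2(1).

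The heart of the argument is therefore the claim: $\langle\ ,\ \rangle_\mu\ne 0$ on $C(l,\mu)$ if and only if $\mu$ is an $e(q^2)$-restricted partition of $n-2l$. First I would exhibit the distinguished basis vector $x^\mu_{(\mathfrak{t}^\mu,\boldsymbol{1})}=m_\mu+\check{Br}_n^\mu\in C(l,\mu)$ and compute $\langle x^\mu_{(\mathfrak{t}^\mu,\boldsymbol{1})},x^\mu_{(\mathfrak{t}^\mu,\boldsymbol{1})}\rangle_\mu$ via \eqref{ct12}: this amounts to evaluating $m_\mu m_\mu=e_{(l)}c_\mu e_{(l)}c_\mu$ modulo $\check{Br}_n^\mu$. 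Using Remark \ref{rm1}(5) (that $e_{(l)}$ commutes with $H_{2l+1,n}$) and Lemma \ref{tc1}(2) one pulls the two copies of $e_{(l)}$ together at the cost of a scalar $\bigl(\tfrac{r-r^{-1}}{q-q^{-1}}\bigr)^{l}$, reducing to $c_\mu c_\mu=c_\mu^2$ inside the Hecke subalgebra $H_{2l+1,n}\cong H_{n-2l}(q^2)$. Now $c_\mu^2\equiv(\text{Gram entry for }S^\mu)\,c_\mu\bmod\check{\mathscr{H}}^\mu_{2l+1,n}$, and the comparison \eqref{bilinearform1} vs.\ \eqref{ct12} shows the $q$-Brauer bilinear form restricted to this ``Hecke part'' of $C(l,\mu)$ coincides, up to the non-zero scalar $\bigl(\tfrac{r-r^{-1}}{q-q^{-1}}\bigr)^{l}$, with the Dipper--James/Murphy bilinear form on $S^\mu$ for $H_{n-2l}(q^2)$. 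Hence $\langle\ ,\ \rangle_\mu\ne 0$ on $C(l,\mu)$ iff the bilinear form on the Specht module $S^\mu$ of the Hecke algebra $H_{n-2l}(q^2)$ is non-zero, which by Theorem \ref{simplemd1} (equivalently Corollary \ref{cor2} together with the Dipper--James classification) happens precisely when $\mu$ is $e(q^2)$-restricted.

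The main obstacle I anticipate is proving the comparison cleanly, i.e.\ showing that the restriction of $\langle\ ,\ \rangle_\mu$ to the span of $\{x^\mu_{(\mathfrak{t},\boldsymbol{1})}:\mathfrak{t}\in\STD(\mu)\}$ is, up to the scalar $\bigl(\tfrac{r-r^{-1}}{q-q^{-1}}\bigr)^{l}$, exactly the Hecke form on $S^\mu$ — and in particular that the ``$v\ne\boldsymbol{1}$'' part of the basis \eqref{ct11} does not interfere. Concretely one must check that $m_\lambda g_{d(\mathfrak{t})}g_v\,g^*_u g^*_{d(\mathfrak{s})} m_\lambda$ already lies in $\check{Br}_n^\lambda$ unless $v=u$ (at least modulo terms that do not contribute to the form on the $\mathfrak{t}^\mu,\boldsymbol{1}$ coordinate), which is where Lemma \ref{tc1}(9) and the analysis in the proof of Theorem \ref{Mthm} (second case, $\lambda\trianglerighteq\mu$) are used: $e_{(l)}g_v g^*_u e_{(l)}\in e_{(l)}H_{2l+1,n}+\sum_{m\ge l+1}H_n e_{(m)}H_n$, and the leading ($e_{(l)}$-)term survives with a genuinely non-zero coefficient only in controlled situations. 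Once this bookkeeping is done, the radical of $C(l,\mu)$ is seen to contain the radical of $S^\mu$ ``inflated'', the two have the same codimension behaviour, and the ``only if'' and ``if'' directions of the claim both follow. The rest is a formal transcription of \cite{GL}, \S3, and of the proof of Dipper--James \cite[Theorem 7.6]{DJ1} / \cite[Theorem 3.43]{Math} with $\Lambda_n$ in place of the set of partitions.
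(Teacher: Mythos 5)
Your proposal is correct and follows essentially the same route as the paper: part (1) is reduced via the general cellular theory to showing $D(l,\mu)\neq 0$ iff $\mu$ is $e(q^2)$-restricted, and this is established exactly as in the paper's calculation \eqref{ct13}, which expresses $\langle x^\mu_{(\mathfrak{s},u)},x^\mu_{(\mathfrak{t},v)}\rangle_\mu$ as $\bigl(\tfrac{r-r^{-1}}{q-q^{-1}}\bigr)^{l}$ times an $F$-linear combination of Hecke Gram entries $\langle c_{\mathfrak{s}_1},c_{\mathfrak{t}_1}\rangle$, so the two forms vanish simultaneously. The only caution is that the single diagonal entry $\langle x^\mu_{(\mathfrak{t}^\mu,\boldsymbol{1})},x^\mu_{(\mathfrak{t}^\mu,\boldsymbol{1})}\rangle_\mu$ you compute first can be zero even for an $e(q^2)$-restricted $\mu$ (e.g.\ $\mu=(2,1)$ with $e(q^2)=2$), so one must, as you then in fact do, compare the full form on $\{x^\mu_{(\mathfrak{t},\boldsymbol{1})}:\mathfrak{t}\in\STD(\mu)\}$ with the Hecke form on $S^\mu$ rather than rely on that one entry.
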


\begin{proof}
(1). Since the $q$-Brauer algebra is cellular, it follows from Theorem~3.4~\cite{GL} that the set 
\begin{center} $\{ D(l, \mu) |\  D(l, \mu) \neq 0  \text{ for partition $\mu$ of $n-2l$,\ $0 \le l \le [n/2]$} \}$ \end{center}
is a complete set of pairwise non-isomorphic simple $Br_n(r^{2}, q^{2})$-modules. The remainder of proof is to show that $D(l, \mu) \neq 0$ if and only if $\mu$ is an $e(q^{2})$-restricted partition of $n-2l$. 

Indeed, pick up two non-zero elements $x^\mu_{(\mathfrak{s}, u)} = m_\mu g_{d(\mathfrak{s})}g_u+\check{Br}_n^\mu \text{ and } x^\mu_{(\mathfrak{t}, v)} = m_\mu g_{\mathfrak{t}}g_v+~\check{Br}_n^\mu$ in $C(l, \mu)$ with arbitrary pairs $(\mathfrak{s},\ u),\ (\mathfrak{t}, v) \in \mathcal{I}_{n}(l,\mu)$. This yields, using \eqref{ct12},

\begin{align} \label{ct13}
\langle x^\mu_{(\mathfrak{s}, u)}, x^\mu_{(\mathfrak{t}, v)}  \rangle_\mu m_\mu &= \langle m_\mu g_{d(\mathfrak{s})}g_u +\check{Br}_n^\mu,\ m_\mu g_{d(\mathfrak{t})} g_v +\check{Br}_n^\mu \rangle_\mu m_\mu\\
&\overset{\eqref{ct12}}{\equiv} m_{\mu} g_{d(\mathfrak{s})}g_v g^*_u g^*_{d(\mathfrak{t})}m_{\mu} \mod \check{Br}^\mu_n \notag \\
&\overset{\eqref{ct4}}{\equiv} e_{(l)} (c_\mu g_{d(\mathfrak{s})}) g_v g^*_u ( g^*_{d(\mathfrak{t})}c_{\mu}) e_{(l)} \mod \check{Br}^\mu_n\notag \\
&\overset{\eqref{ct7'}}{\equiv} e_{(l)} (c_{\mathfrak{\boldsymbol{1}}\mathfrak{s}} g_v) (g^*_u c_{\mathfrak{t}\mathfrak{\boldsymbol{1}}}) e_{(l)} \mod \check{Br}^\mu_n \notag \\
&\overset{\eqref{ct6}}{\equiv} e_{(l)} (\sum_{\mathfrak{s}_1 \in Std(\mu)} a_{\mathfrak{s}_1} c_{\mathfrak{\boldsymbol{1}}\mathfrak{s}_1}+ \check{\mathscr{H}}^\mu_{2l+1, n})(\sum_{\mathfrak{t}_1 \in Std(\mu)} a_{\mathfrak{t}_1}c_{\mathfrak{t}_1 \mathfrak{\boldsymbol{1}}}+ \check{\mathscr{H}}^\mu_{2l+1, n}) e_{(l)} \mod \check{Br}^\mu_n \notag \\
&\overset{\eqref{ct7}}{\equiv} e_{(l)} (\sum_{\mathfrak{s}_1 \in Std(\mu)} a_{\mathfrak{s}_1} c_{\mathfrak{s}_1})(\sum_{\mathfrak{t}_1 \in Std(\mu)} a_{\mathfrak{t}_1}c^*_{\mathfrak{t}_1}) e_{(l)} \mod \check{Br}^\mu_n \notag \\
&\overset{}{\equiv} e_{(l)} \sum_{\mathfrak{s}_1, \mathfrak{t}_1 \in Std(\mu)} a_{\mathfrak{s}_1}a_{\mathfrak{t}_1} (c_{\mathfrak{s}_1} c^*_{\mathfrak{t}_1}) e_{(l)} \mod \check{Br}^\mu_n\notag \\
&\overset{\eqref{bilinearform1}}{\equiv} e_{(l)} \sum_{\mathfrak{s}_1, \mathfrak{t}_1 \in Std(\mu)} a_{\mathfrak{s}_1} a_{\mathfrak{t}_1} (\langle  c_{\mathfrak{s}_1},\ c_{\mathfrak{t}_1} \rangle  c_{\mu} + \check{\mathscr{H}}^\mu_{2l+1, n}) e_{(l)} \mod \check{Br}^\mu_n \notag 
\end{align}
\begin{align*}
&\overset{\eqref{ct4}, L\ref{tc1}(2)\ \text{and}\ \eqref{bsofBr}}{\equiv} \sum_{\mathfrak{s}_1, \mathfrak{t}_1 \in Std(\mu)} \big{(}\dfrac{r-r^{-1}}{q-q^{-1}}\big{)}^l a_{\mathfrak{s}_1} a_{\mathfrak{t}_1} \langle  c_{\mathfrak{s}_1},\ c_{\mathfrak{t}_1} \rangle m_{\mu} \mod \check{Br}^\mu_n \notag 
 \end{align*}
 where $a_{\mathfrak{s}_1}, \ a_{\mathfrak{t}_1}$ are coefficients in $F$.
 
 Now, if $\mu$ is an $e(q^2)$-restricted partition of $n-2l$ then by Theorem~\ref{simplemd1}(1) it implies $D^\mu \neq 0$, that is, there exist $\mathfrak{s}_0,\ \mathfrak{t}_0 \in Std(\mu)$ such that 
 $\langle c_{\mathfrak{s}_0},\ c_{\mathfrak{t}_0} \rangle \neq 0$. Subsequently, fix two basis elements $x^\mu_{(\mathfrak{s}_0, \boldsymbol{1})} = m_\mu g_{d(\mathfrak{s}_0)}+\check{Br}_n^\mu  \text{ and } x^\mu_{(\mathfrak{t}_0, \boldsymbol{1})} = m_\mu g_{d(\mathfrak{t}_0)}+\check{Br}_n^\mu$ in $C(k, \lambda)$. As a special case of calculation \eqref{ct13} we obtain 
\begin{align*}
\langle x^\mu_{(\mathfrak{s}_0, \boldsymbol{1})} , x^\mu_{(\mathfrak{t}_0, \boldsymbol{1})}   \rangle_\mu m_\mu \equiv \big{(}\dfrac{r-r^{-1}}{q-q^{-1}} \big{)}^l \langle c_{\mathfrak{s}_0}, c_{\mathfrak{t}_0} \rangle m_\mu \ \mod\check{Br}^\mu_n
 \end{align*}
 and hence $\langle x^\mu_{(\mathfrak{s}_0, \boldsymbol{1})} , x^\mu_{(\mathfrak{t}_0, \boldsymbol{1})}  \rangle_\mu =\big{(}\dfrac{r-r^{-1}}{q-q^{-1}}\big{)}^l \langle c_{\mathfrak{s}_0}, c_{\mathfrak{t}_0} \rangle \neq 0$. This result implies that $D(l, \mu) \neq 0$. 
 
 Conversely, if $\mu$ is not $e(q^2)$-restricted then by Theorem~\ref{simplemd1}(1), $D^\mu =0$. This means that $ \langle c_{\mathfrak{s}}, c_{\mathfrak{t}} \rangle = 0$ for any $c_{\mathfrak{s}}, c_{\mathfrak{t}}\in S^{\mu}$. Applying calculation \eqref{ct13} it implies 
$\langle x^\mu_{(\mathfrak{s}, u)}, x^\mu_{(\mathfrak{t}, v)}  \rangle_\mu = 0$ for all $x^\mu_{(\mathfrak{s}, u)}, x^\mu_{(\mathfrak{t}, v)} \in C(l, \mu)$, namely, $D(l, \mu) = 0$.

(2). This statement follows by applying the general theory of cellular algebras and Proposition~3.6~\cite{GL}.
\end{proof}

\begin{corollary} \label{cor3}
For $(k, \lambda) \in \Lambda_n$, $r,\ q$ and $(r - r^{-1})/(q-q^{-1})$ invertible elements in an arbitrary field $F$, let $Br_n(r^{2}, q^{2})$ be a $q$-Brauer algebra over $F$. The following statements are equivalent.
\begin{enumerate}
\item $Br_n(r^{2}, q^{2})$ is semisimple;
\item $C(k, \lambda) = D(k, \lambda)$ for all $(k, \lambda) \in \Lambda_n$; and,
\item The $F$-bilinear form $\langle\ ,\  \rangle_\lambda$ (cf. \eqref{ct12}) is non-degenerate for all $(k, \lambda) \in \Lambda_n$.
\end{enumerate}
\end{corollary}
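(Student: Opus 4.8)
The plan is to deduce Corollary~\ref{cor3} as a direct application of the general theory of cellular algebras to the cell datum $(\Lambda_n, \mathcal{I}_n, C, *)$ established above, in exactly the same way Corollary~\ref{cor2} is deduced for the Hecke algebra. Concretely, the chain of implications $(1)\Rightarrow(2)\Rightarrow(3)\Rightarrow(1)$ will be proven; none of the steps requires genuinely new calculation, only an invocation of the right structural result from \cite{GL}.

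\textbf{From (1) to (2).} Suppose $Br_n(r^2,q^2)$ is semisimple. Since the algebra is cellular with cell modules $\{C(k,\lambda) : (k,\lambda)\in\Lambda_n\}$, semisimplicity forces every cell module to be simple: indeed by Statement~1 each nonzero $D(k,\lambda)$ is simple, and a semisimple algebra has no proper submodules of its irreducible-candidate modules, so in particular $\mathrm{rad}(C(k,\lambda))$ must vanish, giving $C(k,\lambda)=D(k,\lambda)$. (Alternatively: a cellular algebra is semisimple iff all cell modules are simple and pairwise non-isomorphic and their dimensions account for $\dim Br_n$, by Theorem~3.4 of \cite{GL} together with the Wedderburn count; if some $C(k,\lambda)$ were reducible the sum $\sum (\dim D(l,\mu))^2$ would fall short.) This yields (2).

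\textbf{From (2) to (3).} If $C(k,\lambda)=D(k,\lambda)$ for all $(k,\lambda)\in\Lambda_n$, then $\mathrm{rad}(C(k,\lambda))=0$ for every $(k,\lambda)$. By the very definition $\mathrm{rad}(C(k,\lambda))=\{x\in C(k,\lambda) : \langle x,y\rangle_\lambda=0\ \forall y\in C(k,\lambda)\}$ is precisely the radical of the bilinear form $\langle\ ,\ \rangle_\lambda$; its vanishing is the statement that the form is non-degenerate on $C(k,\lambda)$. Hence (3).

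\textbf{From (3) to (1).} Suppose $\langle\ ,\ \rangle_\lambda$ is non-degenerate for every $(k,\lambda)\in\Lambda_n$. Then $\mathrm{rad}(C(k,\lambda))=0$, so $D(k,\lambda)=C(k,\lambda)\neq 0$ for all $(k,\lambda)$, and by Theorem~\ref{irremodule}(1) the modules $\{D(k,\lambda) : (k,\lambda)\in\Lambda_n\}$ form a complete set of pairwise non-isomorphic simple modules. Now invoke the general criterion for cellular algebras (Theorem~3.8 of \cite{GL}): a cellular algebra over a field is (split) semisimple if and only if no cell module has a nonzero radical, equivalently if and only if $C(k,\lambda)=D(k,\lambda)$ for all cell modules. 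Thus (1) follows. I expect the only point requiring care is to be precise about which theorem of \cite{GL} licenses the final implication — it is the standard semisimplicity criterion for cellular algebras (the cellular analogue of the fact that an algebra is semisimple iff every cell/standard module is irreducible), applied to the datum verified in Section~\ref{Mbasis}; no obstacle beyond bookkeeping is anticipated.
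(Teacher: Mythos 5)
Your proposal is correct and matches the paper's (implicit) argument: the paper states Corollary~\ref{cor3} without a written proof, treating it as an immediate consequence of Graham--Lehrer's semisimplicity criterion for cellular algebras (their Theorem~3.8) applied to the cell datum $(\Lambda_n,\mathcal{I}_n,C,*)$ established after Theorem~\ref{Mthm}, exactly as you do. Your spelled-out chain $(1)\Rightarrow(2)\Rightarrow(3)\Rightarrow(1)$ is sound, with the parenthetical Wedderburn dimension count being the rigorous version of the slightly loose first sentence in the step $(1)\Rightarrow(2)$.
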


\begin{remark} The same results as Theorem \ref{irremodule} and Corollary \ref{cor3} hold true for the version $Br_n(N)$ of the $q$-Brauer algebra. Furthermore, when $q=1$ then the statement in Theorem~\ref{irremodule} recovers that for the classical Brauer algebra with non-zero parameter which was shown in Theorem 4.17\cite{GL} by Graham and Lehrer. Also notice that in this case the cell module of the Brauer algebra in \cite{GL} is dual to the one in this paper.
\end{remark}

\section{Is the $q$-Brauer algebra generically isomorphic with the BMW-algebra?}
In this section, we answer the question whether the $q$-Brauer algebra is isomorphic with the BMW-algebra? Combining the cellularity of the $q$-Brauer algebra, explicit calculations on basis \eqref{ct11} of the Specht modules $C(k, \lambda)$ and concrete examples, we show that in general the answer is {\bf{"No"}}. To this end, we need the following results.

\smallskip
\begin{proposition} \label{criterion1}
Let $Br_n(r^{2}, q^{2})$ be the $q$-Brauer algebra over an arbitrary field $F$ with invertible elements $r,\ q \text{ and } \dfrac{r-r^{-1}}{q-q^{-1}} \in F$. Then
\begin{enumerate}
\item $Br_2(r^{2}, q^{2})$ is semisimple if and only if $e(q^{2}) > 2$.
\item $Br_3(r^{2}, q^{2})$ is semisimple if and only if $e(q^{2}) > 3$ and $\dfrac{3q^{5}(r^{2} - q^{2})^{2}(q^{4}r^{2} -1)} {r^{3}(q^{2}-1)^{3}} \neq 0$.
\end{enumerate}
\end{proposition}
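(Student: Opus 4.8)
The plan is to use the criterion in Corollary \ref{cor3}, which reduces semisimplicity of $Br_n(r^2,q^2)$ to the non-degeneracy of the bilinear form $\langle\ ,\ \rangle_\lambda$ on every cell module $C(k,\lambda)$ with $(k,\lambda)\in\Lambda_n$. For small $n$ the list of pairs $(k,\lambda)$ is short, so the argument becomes a finite case check. For $n=2$ the set $\Lambda_2$ consists of $(0,(2))$, $(0,(1,1))$ and $(1,\varnothing)$; for $n=3$ it consists of $(0,(3))$, $(0,(2,1))$, $(0,(1,1,1))$ and $(1,(1))$. In each case I would exhibit the cell module basis \eqref{ct11} explicitly and compute (or invoke) its Gram matrix.

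First I would handle the Hecke-type cells, i.e.\ those with $k=0$. Here $e_{(0)}=\boldsymbol 1$, so $C(0,\lambda)=S^\lambda$ as an $H_n$-module and the form $\langle\ ,\ \rangle_\lambda$ restricts to the classical Murphy form on the Specht module (as noted in the text just before Theorem \ref{irremodule}). By Corollary \ref{cor2}, these forms are all non-degenerate precisely when $e(q^2)>n$. So the $k=0$ cells contribute exactly the condition $e(q^2)>2$ (for $n=2$) respectively $e(q^2)>3$ (for $n=3$), and nothing more. This disposes of half of the statement essentially for free.

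Next I would treat the cells with $k\ge 1$. For $n=2$ the only such cell is $(1,\varnothing)$: then $m_\varnothing=e$, $B_{1,2}=\{\boldsymbol 1\}$, $S_{2k+1,n}$ is trivial, so $C(1,\varnothing)$ is one-dimensional spanned by $e$, and $\langle e,e\rangle_\varnothing\,e\equiv e\cdot e=\frac{r-r^{-1}}{q-q^{-1}}e$, giving $\langle e,e\rangle_\varnothing=\frac{r-r^{-1}}{q-q^{-1}}$, which is invertible by hypothesis. Hence for $n=2$ the form on this cell is automatically non-degenerate, and combining with the previous paragraph yields that $Br_2(r^2,q^2)$ is semisimple iff $e(q^2)>2$, which is part (1). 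For $n=3$ the only cell with $k\ge 1$ is $(1,(1))$, and its Gram matrix with respect to the ordered basis $e,\,eg_2,\,eg_2g_1$ has already been computed in Example \ref{bilinear:ex}, with determinant \eqref{detGr3} equal to $\dfrac{3q^5(r^2-q^2)^2(q^4r^2-1)}{r^3(q^2-1)^3}$. This form is non-degenerate iff that determinant is nonzero. Putting the $k=0$ contribution ($e(q^2)>3$) together with this gives part (2).

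The only real content, and thus the main obstacle, is verifying the Gram matrix of Example \ref{bilinear:ex}: one must expand each product $eg_\sigma\cdot(eg_\tau)^*$ using relations $(E_1)$, $(E_2)$ and the Hecke relations, reduce modulo $\check{Br}_3^{(1)}=(0)$, and read off the coefficient of $m_{(1)}=e$. This is a direct but slightly delicate computation with the defining relations; since it is carried out in \cite{N1} p.~69, I would simply cite it. Finally I would remark that since $\frac{r-r^{-1}}{q-q^{-1}}$, $q$ and $r$ are invertible, the factors $q^5$, $r^3$ and $(q^2-1)^3$ in the denominator cause no trouble, and the numerator $3q^5(r^2-q^2)^2(q^4r^2-1)$ vanishes exactly when one of $3=0$ in $F$, $r^2=q^2$, or $q^4r^2=1$ holds; this is what makes the criterion genuinely restrictive and is exactly what will be exploited in the subsequent examples to separate $Br_n$ from the BMW-algebra.
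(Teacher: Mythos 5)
Your proposal is correct and follows essentially the same route as the paper's own proof: reduce to non-degeneracy of the forms on all cell modules via Corollary \ref{cor3}, identify the $k=0$ cells with Hecke algebra Specht modules and invoke Corollary \ref{cor2}, compute the one-dimensional form on $C(1,\varnothing)$ directly, and cite the Gram determinant of Example \ref{bilinear:ex} for $C(1,(1))$. No gaps.
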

\begin{proof}
If $n=2$ and $\lambda$ is a partition of $2$, then the cell modules $C(0, \lambda)$ coincide with the cell modules $S^{\lambda}$ of the Hecke algebra $H_2$ and this yields $\langle\,\,,\,\rangle \equiv \langle\,\,,\,\rangle_\lambda$.
By Corollary~\ref{cor2}, the $F$-bilinear form $\langle\,\,,\,\rangle$ on $H_2$ is non-degenerate if and only if $e(q^2) > 2$.
If $\lambda = \varnothing$, then $\check{Br}_2^{\lambda} \equiv \check{\mathscr{H}}^\lambda_2 = F$ and $m_{\lambda}=e$.
As shown in \eqref{ct11}, the cell module $C(1, \lambda)$ has basis
$$\{ \ eg_v + \check{Br}_2^{\lambda}\ | \ v\in B_{1, 2} = \{ \boldsymbol{1}\}\ \} = \{ e \}.$$
The Gram determinant with respect to this basis is $\langle e\,,\ e  \rangle_\lambda e =e^2 \overset{(E_1)}= \dfrac{r-r^{-1}}{q-q^{-1}}e$, that is,
$\langle e\,,\ e  \rangle_\lambda = \dfrac{r-r^{-1}}{q-q^{-1}} \neq 0.$
Now, the first statement follows from Corollary \ref{cor3}.

In the case $n=3$ and $\lambda$ is a partition of $3$, then using the same argument as above yields that the cell modules $C(0, \lambda)$ coincide with the cell modules $S^{\lambda}$ of the Hecke algebra $H_3$, and this implies $\langle\,\,,\,\rangle \equiv \langle\,\,,\,\rangle_\lambda$. By Corollary \ref{cor2} the $F$-bilinear form $\langle\,\,,\, \rangle_\lambda$ is non-degenerate if and only if $e(q^2) > 3$.
Otherwise, if $n=3$ and $\lambda = (1)$ then applying Example~\eqref{bilinear:ex}, the Gram determinant on $C(1, \lambda)$ is non-zero if and only if 
$$\dfrac{3q^{5}(r^{2} - q^{2})^{2}(q^{4}r^{2} -1)} {r^{3}(q^{2}-1)^{3}} \neq 0.$$
Hence, we get the statement (2) by using Corollary~\ref{cor3}.
\end{proof}
If replacing the version $Br_n(r^{2}, q^{2})$ by $Br_n(N)$ or $Br_n(r, q)$ used by Wenzl \cite{W3} and  Dung \cite{N}, then the results are the following.

\begin{proposition} \label{criterion2}
Let $Br_n(r, q)$ be the $q$-Brauer algebra over an arbitrary field $F$ with invertible elements $r,\ q \text{ and } \dfrac{r-1}{q-1}$ in $F$. Then
\begin{enumerate}
\item $Br_2(r, q)$ is semisimple if and only if $e(q) > 2$.
\item $Br_3(r, q)$ is semisimple if and only if $e(q) > 3$ and $\dfrac{3q(r - q)^{2}(q^{2}r -1)} {(q - 1)^{3}} \neq 0$.
\end{enumerate}
\end{proposition}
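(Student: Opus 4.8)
The plan is to derive Proposition~\ref{criterion2} from Proposition~\ref{criterion1} by the change of scalars recorded in Remark~\ref{rm1}(2), rather than re-running the cellular-algebra argument. Recall that Remark~\ref{rm1}(2) identifies Dung's algebra $Br_n(\beta^2,\alpha^2)$ with the algebra of Definition~\ref{DefofBr} whose parameters $q,r$ are set to $\alpha,\beta$, the identification sending $g_i$ to $g_i$ and the generator $e$ to a nonzero scalar multiple of the generator $e$; one checks directly that the relations $(E_1)$ and $(E_2)$ transform accordingly. Consequently, as soon as $\hat q$ and $\hat r$ are squares in the ground field, say $\hat q=\alpha^2$ and $\hat r=\beta^2$, Dung's algebra $Br_n(\hat r,\hat q)$ is isomorphic to the algebra of Definition~\ref{DefofBr} with parameters $q,r$ set to $\alpha,\beta$, and Proposition~\ref{criterion1} applies to it verbatim (its invertibility hypotheses hold, since $\tfrac{\beta-\beta^{-1}}{\alpha-\alpha^{-1}}$ differs from the invertible $\tfrac{\hat r-1}{\hat q-1}$ by the unit $\alpha\beta^{-1}$).

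First I would dispose of the square-root hypothesis. For a finite-dimensional cellular algebra, semisimplicity is equivalent to the non-vanishing of every cell-module Gram determinant --- this is Corollary~\ref{cor3} for $Br_n(r^2,q^2)$, and the same argument gives it for $Br_n(r,q)$, which is cellular by Remark~\ref{rm1}(2). A Gram determinant is an element of the ground field, and both its value and whether it vanishes are unchanged by extension of scalars. Hence $Br_n(\hat r,\hat q)$ is semisimple over $F$ if and only if it is semisimple over any chosen extension $K\supseteq F$; taking $K$ large enough to contain square roots $\alpha,\beta$ of $\hat q$ and $\hat r$ places us in the situation of the previous paragraph.

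Now the criterion transfers. For $n=2$: $Br_2(\hat r,\hat q)\cong Br_2(\beta^2,\alpha^2)$ is semisimple if and only if $e(\alpha^2)>2$, and $e(\alpha^2)=e(\hat q)$ because $[m]_{\alpha^2}=[m]_{\hat q}$ for every $m$; this is part~(1). For $n=3$: Proposition~\ref{criterion1}(2) gives semisimplicity if and only if $e(\alpha^2)>3$ and $\dfrac{3\alpha^5(\beta^2-\alpha^2)^2(\alpha^4\beta^2-1)}{\beta^3(\alpha^2-1)^3}\ne 0$. Substituting $\alpha^2=\hat q$ and $\beta^2=\hat r$, this rational function becomes $\dfrac{3\alpha^5(\hat r-\hat q)^2(\hat q^2\hat r-1)}{\beta^3(\hat q-1)^3}$, and since $\alpha^5\beta^{-3}$ is a unit, its non-vanishing is equivalent to that of $\dfrac{3\hat q(\hat r-\hat q)^2(\hat q^2\hat r-1)}{(\hat q-1)^3}$ (multiply through by the unit $\hat q$); together with $e(\hat q)>3$ this is part~(2). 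Every quantity in the resulting criterion already lies in $F$, so the auxiliary extension $K$ leaves no trace.

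The only genuine subtlety is the field-extension step in the second paragraph, which is exactly why one invokes the Gram-determinant characterization of semisimplicity rather than arguing abstractly; the remainder is unit-bookkeeping. An alternative that avoids extensions altogether is to reprove Proposition~\ref{criterion1} directly in the conventions of $Br_n(r,q)$: the only computation that changes is the Gram matrix of Example~\ref{bilinear:ex}, which for the cell module $C(1,(1))$ of $Br_3(r,q)$, in the ordered basis $e,\ eg_2,\ eg_2g_1$ and with $e^2=\tfrac{r-1}{q-1}e$, $eg_1=qe$ and $eg_2e=re$, has determinant a nonzero-unit multiple of $\dfrac{3q(r-q)^2(q^2r-1)}{(q-1)^3}$; I expect recomputing that $3\times3$ determinant to be the most tedious --- though still routine --- step of that approach.
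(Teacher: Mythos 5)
Your argument is correct, but it is not the route the paper takes: the paper's proof of Proposition~\ref{criterion2} simply says ``the proof is similar to the one above'' --- i.e.\ it re-runs the cellular computation of Proposition~\ref{criterion1} in the conventions of $Br_n(r,q)$, citing Section~3 of \cite{N} for the analogue of the Gram matrix in Example~\ref{bilinear:ex} (this is exactly the ``alternative'' you sketch in your last paragraph). What you do instead is deduce the statement formally from Proposition~\ref{criterion1} via the parameter substitution of Remark~\ref{rm1}(2), disposing of the missing square roots by extending scalars and invoking the fact that semisimplicity of a cellular algebra is detected by Gram determinants, which are insensitive to field extension. Your unit bookkeeping checks out: the hypothesis $\tfrac{\beta-\beta^{-1}}{\alpha-\alpha^{-1}}=\alpha\beta^{-1}\tfrac{\hat r-1}{\hat q-1}$ is a unit, $e(\alpha^2)=e(\hat q)$ since $[m]_{\alpha^2}=[m]_{\hat q}$, and the determinant of Proposition~\ref{criterion1}(2) becomes $\alpha^{3}\beta^{-3}\cdot\tfrac{3\hat q(\hat r-\hat q)^2(\hat q^2\hat r-1)}{(\hat q-1)^3}$, so its vanishing locus is the one stated. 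The trade-off: your route avoids recomputing the $3\times 3$ Gram matrix but leans on the extension-of-scalars lemma and on cellularity of $Br_n(r,q)$ itself over $F$ --- the latter you attribute to Remark~\ref{rm1}(2), which is slightly circular when $\hat q,\hat r$ are not squares in $F$; you should instead cite the cellularity of the generic $Br_n(r,q)$ established in \cite{N} (Theorem~\ref{Cellthm}), which specializes to $F$. The paper's route is more computational but self-contained modulo \cite{N}; yours makes the logical dependence on Proposition~\ref{criterion1} explicit and would generalize immediately to any statement about $Br_n(r^2,q^2)$ expressible through Gram determinants.
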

The proof is similar to the one above, using Section 3 in~\cite{N} for detail calculations. 

\begin{proposition} \label{criterion3} Let $N \in \Z\setminus \{0\}$ and 
$Br_n(N)$ be the $q$-Brauer algebra over an arbitrary field $F$ with $0 \neq q, [N] \in F$. Then
\begin{enumerate}
\item $Br_2(N)$ is semisimple if and only if $e(q) > 2$.
\item $Br_3(N)$ is semisimple if and only if $e(q) > 3$ and 
\begin{center}
 $3q^4(q^N - q[N])([N] + q^{N+1} + q^{N+3}) \neq 0$. \end{center}
\end{enumerate}
\end{proposition}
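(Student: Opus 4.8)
The plan is to argue as in Propositions \ref{criterion1} and \ref{criterion2}, with the relations $(E_1)$, $(E_2)$ replaced throughout by $(E'_1)$, $(E'_2)$. By Corollary \ref{cor3} and the remark following it, $Br_n(N)$ is semisimple over $F$ if and only if the bilinear form $\langle\ ,\ \rangle_\lambda$ of \eqref{ct12} is non-degenerate on the cell module $C(k,\lambda)$ for every $(k,\lambda)\in\Lambda_n$. Since $\Lambda_2=\{(0,(2)),(0,(1,1)),(1,\varnothing)\}$ and $\Lambda_3=\{(0,(3)),(0,(2,1)),(0,(1,1,1)),(1,(1))\}$, this is a finite check, and I would run through the pairs one at a time.

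First I would dispose of the pairs $(0,\lambda)$ with $\lambda\vdash n$. For these $k=0$, so $m_\lambda=c_\lambda$ and the $B$-component of $\mathcal{I}_n(0,\lambda)$ reduces to $\{\boldsymbol{1}\}$; working modulo $\check{Br}_n^\lambda$, only the relations of the Hecke subalgebra $\mathscr{H}_n(q^2)\subseteq Br_n(N)$ intervene in \eqref{ct12}, and exactly as in Proposition \ref{criterion1} the pair $(C(0,\lambda),\langle\ ,\ \rangle_\lambda)$ is identified with the Specht module $S^\lambda$ of $\mathscr{H}_n(q^2)$ carrying the form \eqref{bilinearform1}. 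By Corollary \ref{cor2}, the forms on all these $S^\lambda$, $\lambda\vdash n$, are non-degenerate simultaneously if and only if $e(q^2)>n$.

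It remains to treat the pairs with $k\ge 1$. For $n=2$ the only one is $(1,\varnothing)$: then $m_\varnothing=e$ and $B_{1,2}=\{\boldsymbol{1}\}$, so $C(1,\varnothing)$ is one-dimensional with basis $e+\check{Br}_2^\varnothing$, and $(E'_1)$ gives $\langle e,e\rangle_\varnothing\,e\equiv e^2=[N]\,e$, hence $\langle e,e\rangle_\varnothing=[N]\ne 0$; this form is always non-degenerate, so with the previous paragraph and Corollary \ref{cor3} we obtain (1). For $n=3$ the only one is $(1,(1))$, and $B_{1,3}=\{\boldsymbol{1},s_2,s_2s_1\}$, so $C(1,(1))$ is three-dimensional with ordered basis $e$, $eg_2$, $eg_2g_1$, as in Example \ref{bilinear:ex}. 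I would compute the Gram matrix of $\langle\ ,\ \rangle_{(1)}$ on this basis by reducing each product $(eg_i\cdots)(eg_j\cdots)^*$ modulo $\check{Br}_3^{(1)}$ to a scalar multiple of $e$, using the braid relation $g_2g_1g_2=g_1g_2g_1$, the quadratic relation $g_i^2=(q^2-1)g_i+q^2$, and $(E'_1)$, $(E'_2)$. Expanding the resulting determinant should give, up to an invertible factor, the quantity $3q^4(q^N-q[N])([N]+q^{N+1}+q^{N+3})$ in the statement, so $\langle\ ,\ \rangle_{(1)}$ is non-degenerate precisely when this is non-zero; together with $e(q^2)>3$ from the pairs $(0,\lambda)$, Corollary \ref{cor3} yields (2).

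The only step that is not a transcription of arguments already in the paper, and the one I expect to be the main obstacle, is this $3\times 3$ determinant on $C(1,(1))$: one must carry the chains $eg_2g_1g_1g_2e$, $eg_2g_1g_2e$, $eg_2g_2e$ and their transposes correctly through $(E'_1)$ and $(E'_2)$ down to scalars times $e$, and then verify that the resulting polynomial in $q$ and $[N]$ really does collapse into the displayed factored form. Everything else follows verbatim from the proofs of Propositions \ref{criterion1} and \ref{criterion2}.
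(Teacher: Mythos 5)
Your plan is exactly the paper's: the paper's entire proof of this proposition is the sentence ``the proof uses the same arguments as in Proposition \ref{criterion1}, applying the definition of $Br_n(N)$ for calculations,'' and your enumeration of $\Lambda_2$, $\Lambda_3$, the reduction of the pairs $(0,\lambda)$ to the Hecke subalgebra via Corollary \ref{cor2}, and the reduction of $(1,\varnothing)$ and $(1,(1))$ to explicit Gram computations with $(E'_1)$, $(E'_2)$ is precisely that argument spelled out. So there is no divergence of method. However, the two places where you defer to ``should give'' are exactly where the content lies, and neither works out quite as you assert.

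First, the Hecke-subalgebra part of your argument yields the condition $e(q^2)>n$ (the relation $(H)$ for $Br_n(N)$ is $g_i^2=(q^2-1)g_i+q^2$, so Corollary \ref{cor2} applies with parameter $q^2$), whereas the statement you are proving says $e(q)>n$; you pass from one to the other without comment. Second, if one actually carries out the $3\times 3$ Gram computation on $C(1,(1))$ that you postpone, the matrix is the one of Example \ref{bilinear:ex} with $a$ replaced by $[N]$ and $rq$ replaced by $q^{N+1}$, and its determinant factors as
\begin{align*}
q^{6}\bigl([N]-q^{-2}\cdot q^{N+1}\bigr)^{2}\bigl([N]+q^{N+1}+q^{N+3}\bigr)
= q^{4}\bigl(q^{N}-q[N]\bigr)^{2}\bigl([N]+q^{N+1}+q^{N+3}\bigr),
\end{align*}
which is \emph{not} the displayed quantity $3q^4(q^N-q[N])([N]+q^{N+1}+q^{N+3})$ up to an invertible factor: the linear factor $(q^N-q[N])$ occurs squared, and there is no factor $3$ (the factor $3$ in \eqref{detGr3} is likewise spurious; e.g.\ at $r=1$, $q^2=2$ the matrix there has determinant $3q^5$, not $9q^5$). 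The non-vanishing locus is unchanged by squaring a factor, but the factor $3$ is not invertible in characteristic $3$, so ``up to an invertible factor'' is false there and the stated criterion would wrongly declare $Br_3(N)$ never semisimple over such fields. In short: your route is the paper's route, but the one computation you leave undone does not confirm the formula you are trying to prove, and the $e(q)$ versus $e(q^2)$ mismatch needs to be addressed rather than elided.
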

The proof uses the same arguments as in Proposition \ref{criterion1}, applying definition of $Br_n(N)$ given in Remark \ref{rm1}(2) for calculations. 

\begin{remark}
1. Notice that if $q=1$ then $e(q^2)$ (resp. $e(q)$) is equal to the characteristic $p$ of the field $F$. It implies that for $r=q^N$ with $N \in \Z \setminus \{0\}$ and the limit $q \rightarrow 1$, our results above recover these ones for the classical Brauer algebra $D_n(N)$ due to Rui~\cite{R} in the case $n\in \{2, 3\}$.
In particular, when $Lim_{q \rightarrow 1}\dfrac{r-r^{-1}}{q-q^{-1}}= N$ and
\begin{align*}
\hspace{1cm} &Lim_{q \rightarrow 1}\dfrac {3q^{5}(r^{2} - q^{2})^{2}(q^{4}r^{2} -1)} {r^{3}(q^{2}-1)^{3}} =
Lim_{q \rightarrow 1}\dfrac{3q^{5}(q^{2N} - q^{2})^{2}(q^{4}q^{2N} -1)} {q^{3N}(q^{2}-1)^{3}} \\
&= Lim_{q \rightarrow 1}\dfrac{3q^{9}} {q^{3N}}\cdot \dfrac{(q^{2(N-1)} - 1)^{2}}{(q^{2}-1)^{2}} \cdot \dfrac{(q^{2(N+2)} -1)}{(q^{2}-1)}
= 3(N - 1)(N +2),
\end{align*}
then $Br_n(q^{2N}, q^2) \equiv D_n(N)$ over the field $F$ in which the limit $q \rightarrow 1$ can be formed. Applying Proposition \ref{criterion1} it implies the following: Over the complex field $Br_2(q^{2N}, q^2)$ is semisimple if and only if $N \neq 0$ and $Br_3(q^{2N}, q^2)$ is semisimple if and only if $N~\not\in~\{-2, 0, 1\}$; over arbitrary field of characteristic $p > 0$, $Br_2(q^{2N}, q^2)$ is semisimple if and only if $N \neq 0$ and $p>2$, and $Br_3(q^{2N}, q^2)$ is semisimple if and only if $N \not\in \{-2, 0, 1\}$ and $p>3$.  These imply that in the limit $q \rightarrow 1$ Proposition \ref{criterion1} recovers Theorems~1.2(a) and 1.3(a)~in \cite{R} for $n \in \{2, 3\}$. The other computation for the version $Br_n(r, q)$ in Proposition \ref{criterion2} is left to the reader.

Similarly, in the case $q=1$, $Br_n(N)$ coincides with the classical Brauer algebra $D_n(N)$ over arbitrary field $F$, $charF = p \geq 0$. A direct calculation yields that Proposition \ref{criterion3} recovers Theorems~1.2(a) and 1.3(a) for $n \in \{2, 3\}$ in \cite{R}.  

2. Over the field of characteristic zero results above agree with Wenzl's results for $n\in\{2, 3 \}$ (see Theorem~5.3~\cite{W2}). In particular, for $Br_n(r^2, q^2)$ (resp. $Br_n(r, q)$) the pair of parameters $(\xi, \rho)$ in his theorem is replaced by $(q^2,\ r)$ (resp. $(q, r)$), respectively. And for $Br_n(N)$, the pair of parameters $(\xi, \rho)$ is replaced by $(q^2,\ q^N)$. 

3. Propositions~\ref{criterion1} and \ref{criterion2} imply a negative answer for the question about the existence of an isomorphism between the $q$~-~Brauer algebra $Br_n(r^2, q^2)$ (resp. $Br_n(r, q)$)  and the BMW- algebra $\mathscr{B}_n$. Three following examples illustrate Claim \ref{claim}.

{\it{In the two following examples, with a same parameter value the BMW-algebra is not simple, but the $q$-Brauer algebra is semisimple}}

\begin{example}\label{ex10}
We consider both algebras $Br_{3}(r^2, q^{2})$ and $\mathscr{B}_{3}$ over the complex field. These algebras simultaneously depend on two parameters $r$ and $q$.
Fixing $r= q^{-1}$ and $q^{2} = -i$, then by Theorem~5.9(b)~\cite{RS} the BMW-algebra $\mathscr{B}_{3}$ is not semisimple
since \begin{center} $q^{4}+1 = (-i)^{2} + 1 = 0.$ \end{center}
On the other hand, both $[m]_{q^{2}} = 1 + q^{2} = 1 - i \neq 0$ and
 $$[m]_{q^{2}} = 1 + q^{2} + (q^{2})^2 = 1 - i + (-i)^{2} = -i \neq 0 \text{, namely, } e(q^{2})=m > 3.$$
Moreover, for $r= q^{-1}$ and $q^{2} = -i$ a direct calculation yields
$$\dfrac {3q^{5}(r^{2} - q^{2})^{2}(q^{4}r^{2} -1)} {r^{3}(q^{2}-1)^{3}}
= \dfrac {3q^{5}(q^{-2} - q^{2})^{2}(q^{4}q^{-2} -1)} {q^{-3}(q^{2}-1)^{3}}= 6i \neq 0.$$
Therefore, applying Proposition~\ref{criterion1}(2) the $q$-Brauer algebra $Br_{3}(r^2, q^{2})$ is semisimple.
\end{example}

\medskip
\begin{example}\label{ex11} 
With respect to the version $Br_{3}(r, q)$ and $\mathscr{B}_{3}$ over the complex field, we choose $r= q^{-1}$ and $q = i\sqrt{i}$. By Theorem~5.9(b)~\cite{RS} the BMW-algebra $\mathscr{B}_{3}$ is not semisimple
since $q^{4}+1 = (i\sqrt{i})^{4} + 1 = 0.$\\
In other words, both $[m]_{q} = 1 + q = 1 + i\sqrt{i} \neq 0$ and
$$[m]_{q} = 1 + q + q^{2} = 1 + i\sqrt{i} + (i\sqrt{i})^{2} = i\sqrt{i} \neq 0 \text{, namely, } e(q)=m > 3.$$
By a direct calculation, for $r= q^{-1}$ and $q = i\sqrt{i}$ it yields 
$$\dfrac {3q(r - q)^{2}(q^{2}r -1)} {(q-1)^{3}}
= \dfrac {3q(q^{-1} - q)^{2}(q^{2}q^{-1} -1)} {(q-1)^{3}}
= 3q^{-1}= 3 (i \sqrt{i})^{-1} \neq 0.$$
Hence, by Proposition \ref{criterion2}(2) the $q$-Brauer algebra $Br_{3}(r, q)$ is semisimple.

The result is illustrated in the following table: 
\begin{align*}
\begin{tabular}[c]{| c  | c | c |}
\hline
$\C$ &BMW-algebra &$q$-Brauer algebra\\
\hline
$(r, q^{2})= (q^{-1}, -i)$& $\mathscr{B}_{3}$ is not semisimple &$Br_{3}(r^2, q^{2})$ is semisimple\\
\hline
$(r, q)= (q^{-1}, i\sqrt{i})$& $\mathscr{B}_{3}$ is not semisimple & $Br_{3}(r, q)$ is  semisimple\\
\hline
\end{tabular}
\end{align*}
\end{example}

\medskip
{\it{The next example shows that over the field of characteristic $p=5$ the BMW-algebra is not semisimple with total twelve parameter values, but the $q$-Brauer algebra is not semisimiple with less than four parameter values.}}

\begin{example}\label{ex12} Over the prime field $\F_5$ if $q \in \{\bar{2},\ \bar{3} \}$, then it is obvious that\\ $[m]_{q^{2}}=\bar{1} + q^{2} = \bar{0}$ and hence $e(q^{2}) \le 2$. Applying Theorem 5.9~in~\cite{R} the BMW-algebra $\mathscr{B}_{2}$ is not semisimple for all $r \in \F_5 \setminus \{\bar{0}\}$. Otherwise, with $q \in \F_5 \setminus \{\bar{0},\ \bar{2},\ \bar{3} \}$ a direct calculation implies that $e(q^{2}) > 2$, and by Theorem 5.9~\cite{R} $\mathscr{B}_{2}$ is not semisimple for $r \in \{ q^{-1}, -q\} = \{\bar{1},\ \bar{4} \}$. Thus, there totally exist twelve value pairs $(r, q)$ such that the BMW-algebra $\mathscr{B}_{2}$ is not semisimple over the field $\F_5$.

Applying Proposition \ref{criterion1}(1) the $q$-Brauer algebra $Br_2(r^2, q^2)$ over the field $\F_5$ is  not semisimple if and only if $q \in \{\bar{2},\ \bar{3} \}$ and $r \in \F_5 \setminus \{\bar{0}\}$ such that $(r-r^{-1})/(q-q^{-1}) \neq 0$. \hspace{2cm} Direct calculation yields $Br_2(r^2, q^2)$ over the field $\F_5$ is not semisimple for all parameters $q \in \{\bar{2},\ \bar{3} \}$ and $r\in \{\bar{2},\ \bar{3} \}$. This means that there are totally such four value pairs $(r, q)$.  

Similarly, on the version $Br_n(r, q)$ Proposition \ref{criterion2}(1) implies that the $q$-Brauer algebra $Br_2(r, q)$ over the field $\F_5$ is  not semisimple if and only if $q \in \{\bar{4}\}$ and $r \in \F_5 \setminus \{\bar{0}\}$ such that $(r-1)/(q-1) \neq 0$. That is, $Br_2(r, q)$ over the field $\F_5$ is not semisimple for all parameters $q =\bar{4}$ and $r\in \{\bar{2},\ \bar{3},\ \bar{4} \}$. 
 
The total parameter values, such that the algebras are not semisimple, are  summarized in the following table.
\begin{align*}
\begin{tabular}[c]{| c  | c | }
\hline
  The non-semisimple case&$\F_5 \times \F_5 $  \\
\hline
The  BMW-algebra $\mathscr{B}_{2}$ & $(r, q) \in (\{\bar{1}, \bar{2}, \bar{3}, \bar{4} \}\times \{\bar{2}, \bar{3} \}) \cup (\{\bar{2}, \bar{3} \}\times \{\bar{1}, \bar{4}) \}$\\
\hline
The $q$-Brauer algebra $Br_2(r^2, q^2)$ & 
 $(r, q) \in \{\bar{2},\ \bar{3} \}\times \{ \bar{2}, \bar{3} \}$\\
 \hline
 The $q$-Brauer algebra $Br_2(r, q)$ & 
 $(r, q) \in \{\bar{2}, \bar{3}, \bar{4} \}\times \{ \bar{4} \}$\\
 \hline
\end{tabular}
\end{align*}
\end{example}
\end{remark}
Thus, these examples imply that in general there does not exist an algebra isomorphism between the $q$-Brauer algebra and the BMW-algebra.


\bibliographystyle{plain}       






\end{document}